\theoremstyle{change}  
\newtheorem{theorem}{Theorem}[section] 
\newtheorem{lem}[theorem]{Lemma}  
\newtheorem{prop}[theorem]{Proposition}
\newtheorem{coro}[theorem]{Corollary}
\newtheorem{remarki}[theorem]{Remark}
\newtheorem{example}[theorem]{Example}
\newtheorem{defi}[theorem]{Definition}
\newtheorem{notation}[theorem]{Notation}
\newtheorem{nothing}[theorem]{} 
\newenvironment{proof}{\noindent{\bf Proof}\ }{\qed\bigskip}
\renewcommand{\le}{\leqslant} 
\renewcommand{\ge}{\geqslant}
\newcommand{\Iso}{\mathrm{Iso}}
\newcommand{\G}{\mathcal{G}_G}
\newcommand{\R}{\mathcal{R}}
\newcommand{\qed}{\nobreak\hfill
                  \vbox{\hrule\hbox{\vrule\hbox to 5pt
                  {\vbox to 8pt{\vfil}\hfil}\vrule}\hrule}}
\newcommand{\Aut}{\mathrm{Aut}}
\newcommand{\calC}{\mathcal{C}}
\newcommand{\calF}{\mathcal{F}}
\newcommand{\calP}{\mathcal{P}}
\newcommand{\catfont}{\mathsf}
\newcommand{\Def}{\mathrm{Def}}
\newcommand{\End}{\mathrm{End}}
\newcommand{\Gset}{\lset{G}}
\newcommand{\Gmor}{\lmor{G}}
\newcommand{\GmorGal}{\lmor{G}^{\catfont{Gal}}}
\newcommand{\HmorGal}{\lmor{H}^{\catfont{Gal}}}
\newcommand{\Hom}{\mathrm{Hom}}
\newcommand{\id}{\mathrm{id}}
\newcommand{\Ind}{\mathrm{Ind}}
\newcommand{\Inf}{\mathrm{Inf}}
\newcommand{\Inn}{\mathrm{Inn}}
\newcommand{\Irr}{\mathrm{Irr}}
\newcommand{\Isom}{\mathrm{Iso}}
\newcommand{\lset}[1]{\llap{\phantom{|}}_{#1}\catfont{set}}
\newcommand{\lmor}[1]{\llap{\phantom{|}}_{#1}\catfont{mor}}
\newcommand{\Out}{\mathrm{Out}}
\newcommand{\Res}{\mathrm{Res}}
\title{Simple Section Biset Functors}
\author{\small Olcay Co\c{s}kun\thanks{Both authors are supported by T\"ubitak-1001-119F422.}\\
        \small Department of Mathematics\\
        \small Bo\u{g}azi\c{c}i University\\
        \small Bebek, 34342 Istanbul\\
        \small Turkey\\
        \small olcay.coskun@boun.edu.tr
        \and
        \small Ruslan Muslumov\\
        \small Department of Mathematics\\
        \small Bo\u{g}azi\c{c}i University\\
        \small Bebek, 34342 Istanbul\\
        \small Turkey\\
        \small ruslan.muslumov@boun.edu.tr%
        }
\begin{document}

\maketitle

\begin{abstract}
We classify simple modules over the Green biset functor of section Burnside rings. 
\end{abstract}
 
\section{Introduction}

Green biset functors, as introduced by Serge Bouc \cite{biset functor}, are rings in the category of biset functors. The initial  example is the Burnside functor. Its module category is equivalent to the category of all biset functors. 
Other examples of Green biset functors include the functor of fibered Burnside rings \cite{fibered biset}, the functor of trivial source modules \cite{Ducellier2015}, the functor of characters \cite{romero} and the functors of slice and section Burnside rings \cite{slice ring}. It is also possible to produce new Green biset functors by applying the Yoneda-Dress construction, as done in \cite{biset functor}. One of the main problems regarding Green biset functors is the classification of their simple modules. The case of biset functors is solved by Bouc in his fundamental paper \cite{densembles}. Later simple functors over the functor of rational characters is classified by Barker in \cite{barker}, simple fibered biset functors are described in \cite{fibered biset}, see also \cite{romero}, and simple functors over the trivial source ring in \cite{Ducellier2015}. There are some partial results about the functor of slice Burnside rings in \cite{Tounkara}. In contrast to all these examples, there is no general classification theorem or conjecture for simple modules over Green biset functors.

The aim of this paper is to solve the classification problem for the functor of section Burnside rings. Section Burnside rings are introduced by Bouc in \cite{slice ring} as the Grothendieck ring of the subcategory of `Galois' morphisms in the category of morphisms of $G$-sets. 
It is shown in \cite{slice ring} that basic morphisms of $G$-sets are given by the natural maps $G/S \to G/T$ associated to subgroups $S\le T\le G$ and basic Galois morphisms are those associated to
subgroups $S\unlhd T\le G$ (a pair of this from is called a section of $G$). With this result, the section Burnside ring $\Gamma(G)$ is free as an abelian group on the $G$-conjugacy classes of sections of $G$. It is also shown in \cite{slice ring} that the functor $\Gamma$ associating $\Gamma(G)$ to the finite group $G$ induces a Green biset functor. We summarize these results in Section 3. 

In this paper we use techniques from \cite{fibered biset} to analyze the structure of the Green biset functor $\Gamma$ and classify its simple modules. For this aim, we introduce a composition product for sections of direct products of groups. More precisely, given finite groups $G, H$ and $K$ and sections $S\unlhd T\le G\times H$ and $U\unlhd V\le H\times K$, we define
\[
\Big( \frac{G\times H}{S\unlhd T}\Big)\times_H \Big( \frac{H\times K}{U\unlhd V}\Big) = \bigsqcup_{t\in p_2(S) \backslash H/p_1(U)}
\Big( \frac{G\times K}{S\ast {}^{(t, 1)}U  \unlhd T\ast {}^{(t, 1)}V}\Big)
\]
where we denote the projection morphism $(G\times H)/S\to (G\times H)/T$ by $\big( \frac{G\times H}{S\unlhd T}\big)$ and the $\ast$-product is the usual composition of relations. In Section 4 we show that the category of modules over the Green biset functor $\Gamma$ is equivalent to the category of functors from the category $\mathcal P_{k\Gamma}$ of finite groups in which composition is given by the linear extension of the above composition product. Here $k$ is a commutative ring of coefficients for $\Gamma$. We call a module over $k\Gamma$ a section biset functor (over $k$).

By the general theory described in \cite{biset functor}, evaluation of a (simple) section biset functor at a finite group $G$ is (zero or) a (simple) module over the endomorphism ring $E_G:=\Gamma(G,G)$ of $G$ in $\mathcal P_\Gamma$. Moreover for a simple section biset functor $S$, if $G$ is a group of smallest order with $S(G)\not = 0$, then $S(G)$ is a simple module over the algebra $\bar E_G$. Here $\bar E_G$ is the quotient of $E_G$ by the ideal generated by all the morphisms that factor through a group of smaller order.

As also discussed in \cite{romero}, the above observation is valid for a simple module over an arbitrary Green biset functor $A$. Indeed if we define $\bar A_G$ as the quotient of $A(G, G)$ by the ideal generated by all morphisms in $\mathcal P_A$ that factor through a group of smaller order, then there is a simple $A$-module corresponding to any simple $\bar A_G$-module. To complete the classification of simple $A$-modules, one needs a classification of simple $\bar A_G$-modules for all $G$ and an equivalence relation on all these simple modules to determine isomorphism types of simple $A$-modules associated to them. 

Following \cite{fibered biset} we use certain idempotent elements in $E_G$ to prove that the algebra $\bar E_G$ is isomorphic to a product of matrix algebras over certain group algebras associated to these idempotents. In our case, the idempotents of interest in $E_G$ are parameterized by pairs $(K, P)$ of normal subgroups of $G$ that centralizes each other which are also reduced in the sense of Section 6. Given such a pair $(K,P)$, the section $(\Delta_K(G), \Delta(P))$ of $G\times G$ gives the idempotent element $e_{(K, P)}$ in $E_G$. Here $\Delta_K(G) =\{ (g, h)\in G\times G | h^{-1}g\in K \}$ and $\Delta (P)$ is the diagonal inclusion of $P$ in $G\times G$. In particular we associate simple $E_G$-modules to each reduced pair $(K, P)$. As mentioned in Section 3, it is also possible to associate crossed modules to pairs $(K, P)$ in a natural way and it turns out that two reduced pairs induce isomorphic $E_G$-modules if and only if the corresponding crossed modules are isomorphic.
With this result, we classify simple modules over the algebra $E_G$, see Section 6 for details. Finally, in Section 7, we show that the relation defined on reduced pairs can be extended to an equivalence relation on the set of quadruples $(G, K, P, [V])$ where $G$ is a finite group, $(K, P)$ is a reduced pair for $G$ and $V$ is an irreducible $E_G$-module associated to the pair $(K, P)$. Moreover there is a bijective correspondence between the equivalence classes of these quadruples and the isomorphism classes of simple section biset functors. 
 
 In order to complete the parametrization one needs to know the complete set of reduced pairs for each finite group $G$. Although we do not know the complete answer, we have some necessary and also some sufficient conditions to ensure that a pair $(K, P)$ is reduced. For example $(K, P)$ is reduced if $K\le P$. Hence the pairs $(1, P)$ for any $P\unlhd G$ and the pairs $(K, G)$ for any $K\le Z(G)$ are reduced. In particular, the algebra $\bar E_G$ is non-zero for any finite group $G$. 

Our reason for attempting to classify simple modules over the Green biset functor $k\Gamma$ is that it is different from the previous known examples in the following sense. First the previous examples are known to have strong connections with the theory of representations of finite groups and in the classification, these connections are used in crucial ways. On the other hand section Burnside rings are relatively new and these kind of deep connections are not known yet. Therefore it is close to be an abstract example. Having a successful application of the techniques from \cite{fibered biset} signals a path towards a more general theory of simple modules over Green biset functors. 

A result that might be of general interest is the version of Goursat's Theorem for sections. The well-known Goursat's Theorem for subgroups states that there is a bijective correspondence between subgroups of a direct product $G\times H$ and the quintuples $(P, K, \eta, L, Q)$ where $K\unlhd P\le G$ and $L\unlhd Q\le H$ and $\eta: Q/L\to P/K$ a group isomorphism. Our version, given as Theorem \ref{thm:Goursat}, determines exact conditions on the pairs of quintuples coming from Goursat's Theorem so that the induced map between sections of $G\times H$ and the pairs satisfying these conditions is bijective.

\section{Green biset functors}
Let $k$ be a commutative ring with unity and let $\calC_k$ denote the biset category over $k$, that is, the category with objects consisting of finite groups and morphisms from $G$ to $H$ are given by the double Burnside group $kB(H, G)$. A $k$-linear functor $F: \calC_k\to k$-mod is called a \emph{biset functor over $k$}. We denote the functor category of all biset functors over $k$ by $\calF_k$. 

\begin{defi}Let $A$ be a biset functor over $k$. We call $A$ a \emph{Green biset functor} over $k$ if there are bilinear maps $A(G)\times A(H)\to A(G\times H), (a, b)\mapsto a\times b$ for each pair of finite groups $G$ and $H$ and an element $\epsilon_A\in A(1)$ with the following properties.
\begin{enumerate}
	\item[(i)] For any finite groups $G, H$ and $K$ and any $(a, b, c)\in
	A(G)\times A(H)\times A(K)$, we have $$(a\times b)\times c = A(\Isom^{\alpha})(a\times(b\times c))$$ where $\alpha$ is the canonical isomorphism from $G\times(H\times K)$ to $(G\times H)\times K$. 
	\item[(ii)] For any finite group $G$ and any $a\in A(G)$, we have $$A(\Isom^\lambda)(\epsilon_A\times a) = a = A(\Isom^{\lambda'})(a\times \epsilon_A)$$ where $\lambda: 1\times G\to G$ and $\lambda': G\times 1\to G$ denote the canonical isomorphisms.
	\item[(iii)] Let $X\in kB(G', G)$ and $Y\in kB(H', H)$. For any $(a,b)\in A(G)\times A(H)$, we have $$A(X\times Y)(a\times b) = A(X)(a)\times A(Y)(b).$$ 
\end{enumerate}
\end{defi}
In other words, a Green biset functor is a monoid object in the category $\calF_k$ of all biset functors. The bilinear maps defining the Green biset functor structure on $A$ gives a ring structure on $A(G)$ for any finite group $G$ and these structures are compatible with the biset functor structure on $A$ via the above properties. Conversely, if $A$ is any biset functor such that $A(G)$ has a ring structure for each $G$ compatible with the biset functor structure, one can make $A$ a Green biset functor. See \cite[3.2.7 and 4.2.3]{romero}.

\begin{example} The Burnside biset functor is a Green biset functor and it is an initial object in the category of all Green biset functors \cite[8.5.3]{biset functor} . Given a Green biset functor $A$, the unique morphism $kB\to A$ of Green biset functors is given by mapping a $G$-set $X$ to $A(X)(\epsilon_A)$ in $A(G)$. Here we regard $X$ as a $(G, 1)$-biset. Other known examples of Green biset functors include the functor of representation rings, the functor of fibered Burnside rings and the functor of trivial source rings.  
\end{example}

\begin{nothing} {\bf The category $\calP_A$.}\quad Let $A$ be a Green biset functor. We associate a category $\calP_A$ to $A$ as follows (\cite[Section 8.6]{biset functor}).  The objects of $\calP_A$ are all finite groups. 
\begin{enumerate}
	\item[(i)] Given finite groups $G$ and $H$, we put $\Hom_{\calP_A}(H, G)= A(G\times H)$.
	\item[(ii)] Given finite groups $G, H, K$, and morphisms $\alpha\in A(H\times K)$ and  $\beta\in A(G\times H)$, the composition is defined by
	$$\beta\circ \alpha = A(\Def^{G\times \Delta(H)\times K}_{G\times K}\Res^{G\times H\times H\times K}_{G\times \Delta(H)\times K})(\beta\times \alpha).$$ Here we identify $G\times K$ with $(G\times \Delta(H)\times K)/(1\times \Delta(H)\times 1)$ via the obvious canonical isomorphism. 
	\item[(iii)] Given a finite group $G$, the identity morphism of $G$ in $\calP_A$ is $A(\Ind_{\Delta(G)}^{G\times G}\Inf_1^{\Delta(G)})(\epsilon_A)$.
\end{enumerate}

Note that the unique morphism $kB\to A$ of Green biset functors induces a functor $\mathcal C_k\to \mathcal P_A$. In particular we may consider the basic operations associated to bisets as operations in $\mathcal P_A$. 
\end{nothing}

\begin{nothing}{\bf $A$-modules.}\label{sec:a-mods}\quad With this definition, an \emph{$A$-module} is a $k$-linear functor $\calP_A\to k$-mod. We denote the (abelian) functor category of all $A$-modules by $A$-Mod. When $A=kB$, the category $kB$-Mod is the same as the category $\calF_k$ of all biset functors. Moreover for any Green biset functor $A$, via the unique morphism $kB\to A$ of Green biset functors, any $A$-module is a biset functor. 

Let $A$ be a Green biset functor and $F$ be an $A$-module. For any finite group $G$, the evaluation $F(G)$ is a module over the endomorphism ring 
$E_G^A = \End_{\calP_A}(G)$ of $G$ in $\calP_A$. Moreover if $G$ is a group of minimal order such that $F(G)\not =0$, then the $E_G^A$-module $F(G)$ is annihilated by all the endomorphisms that factor through a group of smaller order. We denote the ideal of $E_G^A$ generated by such morphism by $I_G^A$ and the quotient $E_G^A/I_G^A$ by $\hat A(G)$ and call it the \emph{essential algebra of $A$ at $G$}. In particular when $G$ is minimal for $F$, then $F(G)$ becomes an $\hat A(G)$-module.

Let $S$ be a simple $A$-module. Then for any group $G$, the $E_G^A$-module $S(G)$ is either zero or a simple $E_G^A$-module. If $G$ is minimal for $S$ then $S(G)$ becomes a simple $\hat A(G)$-module. By the general results in \cite{biset functor}, the simple $A$-module $S$ is determined by the pair $(G, S(G))$ for any finite group $G$. When $G$ is also minimal for $S$, we call the pair $(G, S(G))$ a \emph{seed} for $S$. If $(G, V)$ is a seed for $S$, we also denote $S$ by $S_{G, V}^A$. Conversely, if a pair $(G, V)$ is given where $V$ is a simple $\hat A(G)$-module, there is a unique simple $A$-module with seed $(G, V)$. We denote it by $S_{G, V}^A$. In general two seeds $(G, V)$ and $(G', V')$ may induce isomorphic simple $A$-modules. In this case, we say that $(G, V)$ and $(G', V')$ are equivalent. Then we obtain a correspondence between isomorphism classes of simple $A$-modules and equivalence classes of seeds. Note that two seeds $(G, V)$ and $(G', V')$ might be equivalent without $G$ and $G'$ being isomorphic, see \cite{fibered biset}.

Finally we recall from \cite{fibered biset} and \cite{biset functor} the following construction of the simple functors from seeds. Let $V$ be an $E_G$-module. We denote by $L_{G, V}$ the $A$-module given by
\[
L_{G, V}(H) = A(H\times G) \otimes_{E_G}V
\]
for any finite group $H$. Here the right action of $E_G$ is given by composition on the right and the action of $A$ is given by composition on the left. 

Now suppose $V$ is a simple $E_G$-module. Then by \cite[Lemme 1]{densembles}, the $A$-module $L_{G, V}$ has a unique maximal submodule $J_{G, V}$ given by
\[
J_{G, V} (H) = \{ \sum_i x_i\otimes v_i \in L_{G, V}(H) | \forall y\in A(G\times H): \sum_i (y\circ x_i)v_i = 0 \}.
\]
The simple head $S_{G, V}$, which is the quotient $L_{G, V}/J_{G, V}$ and a simple object of $\calF_k$, has the property that $S_{G, V}(G) \cong V$.
\end{nothing}
\begin{nothing}{\bf Covering algebras.} In general determination of the seeds and the equivalence relation is not easy. The first step in this direction is to identify essential algebras for $A$. Suppose $E'_G$ is a subalgebra of $E_G^A$ such that 
\[
E_G^A = E'_G + I_G^A.
\]
Then we get 
\[
\hat{A}(G) = E'_G /(E'_G\cap I_G^A).
\]
In other words, one can consider $E'_G$ as an approximation to the essential algebra $\hat{A}(G)$. We call 
$E'_G$ a {\it covering algebra} for $E_G^A$. 

Suppose $|G|$ is invertible in $k$. Then an example of covering algebra is the truncated algebra $\tilde{e}_G^G A(G\times G)\tilde{e}_G^G$. Here $\tilde{e}_G^G$ is the image of the primitive idempotent $e_G^G$ of the Burnside ring $B(G)$ in the double Burnside ring $B(G\times G)$ as defined in \cite[Section 6.5]{biset functor}. We regard it as a morphism in $\mathcal P_A$ as explained above. This is a covering algebra, by \cite[Proposition 6.5.5]{biset functor}. 
The algebra $E_G^c$ introduced in \cite[Section 6]{fibered biset} for fibered Burnside functors is equal to 
$\tilde{e}_G^G E_G\tilde{e}_G^G$ by \cite[Proposition 6.5.5]{biset functor}.
It is possible to introduce other covering algebras. One aims to choose a covering algebra for which the intersection with the ideal $I_G^A$ is easier to determine.
\end{nothing}

\begin{nothing}{\bf Idempotents in covering algebras.} \label{app:ipots} In \cite{fibered biset} the covering algebra for the fibered biset functors is studied through a set of central idempotents. This approach can be abstracted to Green biset functors as follows. Note that it only holds under an assumption and generalizes some very basic results.

We fix a covering algebra $E_G^c$ for $A$. Let $(X, \le)$ be a finite poset with minimum element $x_0$. Suppose there is a function
\[
e : X \to E_G^c, x\mapsto e_x
\]
such that 
\begin{enumerate}
\item[(a)] $e_{x_0} = 1_{E_G^c}$
\item[(b)] $ e_x \cdot e_y = \begin{cases} 
       e_{x\vee y} & \mbox{\rm if} \quad x\vee y \, \mbox{\rm exists} \\
      0 & \mbox{\rm otherwise}
   \end{cases}$
\end{enumerate}
In particular $e_x$ is an idempotent in $E_G^c$ for each $x\in X$. We define
\[
f_x = \sum_{x\le y\in X}\mu_{x, y} e_y
\]
where $\mu_{x, y}$ is the Mobius function of the poset $X$. Then by the Mobius inversion formula, we also get
\[
e_x = \sum_{x\le y\in X} f_y
\quad \mbox{\rm and} \quad
1_{E_G^c} = e_{x_0} = \sum_{x\in X} f_x.
\]

The following result is a generalization of Proposition 4.4 from \cite{fibered biset} and Theorem 10.1 from \cite{essential relations}.  
\begin{prop}\label{prop:ipot-ortho}
For all $x, y\in X$, we have
\[ e_x\cdot f_y = f_y\cdot e_x = \begin{cases} 
       f_y & \mbox{\rm if} \quad x\leq y \\
      0 & \mbox{\rm otherwise}
   \end{cases}
\] and
\[ f_x\cdot f_y = \begin{cases} 
       f_x & \mbox{\rm if} \quad x= y \\
      0 & \mbox{\rm otherwise}
   \end{cases}
\] 
\end{prop}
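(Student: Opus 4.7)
The plan is to prove first the $e \cdot f$ formula and then deduce the $f \cdot f$ orthogonality from it via a standard Möbius identity. The symmetry $x \vee y = y \vee x$ applied to condition (b) yields $e_x \cdot e_y = e_y \cdot e_x$, and by $\ZZ$-linearity the $f_x$'s commute with each other and with every $e_z$; this reduces each claim in the proposition to a single equality.

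For the $e \cdot f$ formula I would argue by downward induction on $y$ in the finite poset $X$. If $y$ is a maximal element then $f_y = e_y$ (the sum defining $f_y$ collapses to one term), and the formula follows from condition (b) on noting that maximality of $y$ forces $x \vee y$ to exist only when $x \le y$, in which case $x \vee y = y$. For the inductive step, rewrite the Möbius inversion formula as $f_y = e_y - \sum_{y' > y} f_{y'}$ and compute
\[
e_x \cdot f_y \;=\; e_x \cdot e_y \;-\; \sum_{y' > y} e_x \cdot f_{y'} \;=\; e_x \cdot e_y \;-\; \sum_{\substack{y' > y \\ x \le y'}} f_{y'}
\]
using the induction hypothesis. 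When $x \le y$, the condition $x \le y'$ is automatic and $e_x \cdot e_y = e_y$, so the right hand side collapses to $f_y$. When $x \not\le y$ and $w := x \vee y$ exists, the set $\{y' > y : x \le y'\}$ is the principal filter $\{y' \ge w\}$ (since $x \not\le y$ forces any upper bound of $\{x, y\}$ to be strictly above $y$), whose $f$-sum is $e_w = e_x \cdot e_y$, producing the cancellation.

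With the $e \cdot f$ formula in hand, expanding $f_x = \sum_{z \ge x} \mu_{x, z} e_z$ and applying the formula to each $e_z \cdot f_y$ yields
\[
f_x \cdot f_y \;=\; \sum_{x \le z \le y} \mu_{x, z}\, f_y \;=\; \Big( \sum_{x \le z \le y} \mu_{x, z} \Big)\, f_y \;=\; \delta_{x, y}\, f_y,
\]
by the defining property $\sum_{x \le z \le y} \mu_{x, z} = \delta_{x, y}$ of the Möbius function (the indexing set is empty, hence the product is zero, when $x \not\le y$). The main obstacle is the leftover subcase of the induction in which $x \not\le y$ but $x \vee y$ does not exist in $X$: one then needs $\sum_{y' > y,\, x \le y'} f_{y'} = 0$ in order to match $e_x \cdot e_y = 0$. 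In the applications of the proposition in the present paper the relevant poset is a lattice, so this subcase does not arise; more generally, associativity of $E_G^c$ combined with condition (b) already forces the poset to behave lattice-like on pairs with upper bounds, and if necessary one can formally adjoin the missing joins with $e = 0$ to reduce to the lattice case.
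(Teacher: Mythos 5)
Your proof is correct, but it takes a genuinely different route from the paper's. The paper runs a single induction on $d_x + d_y$ in which both identities are proved simultaneously and in an intertwined way (the $f\cdot f$ claim is obtained by multiplying by $e_{x\vee y}$ and invoking the $e\cdot f$ vanishing for a smaller value of $d$). You instead prove the $e\cdot f$ identity completely first, by a one-variable downward induction on $y$ using $f_y = e_y - \sum_{y'>y} f_{y'}$, and then deduce $f\cdot f$ orthogonality in one line from $\sum_{x\le z\le y}\mu_{x,z} = \delta_{x,y}$. This decoupling is tidier: the paper's bivariate induction, as written, actually only covers the case where $x$ and $y$ are incomparable (when they invoke the inductive hypothesis on the pair $(x\vee y, x)$ they need $d_{x\vee y} < d_y$, which fails precisely when $y = x\vee y$, i.e.\ $x < y$), so the comparable case $x<y$ or $y<x$ requires a separate computation that the paper omits; your argument has no such case split. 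You also explicitly flag and resolve the subcase where $x\vee y$ does not exist --- which the paper's proof also quietly relies on never occurring --- and your remark that associativity of $E_G^c$ forces $e_z = 0$ whenever $z$ is an upper bound of a joinless pair is the right observation (since $(e_x e_y) e_z = 0$ while $e_x(e_y e_z) = e_z$), so the extra sum indeed vanishes even in a non-lattice poset. In short: correct, cleaner, and somewhat more careful than the proof in the paper.
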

\begin{proof}
The proof is almost identical to the proof of \cite[Proposition 4.4]{fibered biset}. We repeat it for completeness. For the first case, if $x\le y$,
then 
\begin{eqnarray*}
e_x\cdot f_y &=& \sum_{y\le z} \mu_{y, z}e_x\cdot e_z\\
&=& \sum_{y\le z} \mu_{y, z} e_z = f_y.
\end{eqnarray*}
Here the second equality holds since $x\le z$ implies $e_x\cdot e_z = e_z$. Also it is clear that $e_x$ and 
$f_y$ commutes. For the rest of the claims, we argue by induction on $d = d_x + d_y$ where for any $x\in X$, the natural number $d_x$ is the largest $n\in \mathbb N_0$ such that there exists a chain 
$x = a_0 < a_1 < \cdots < a_n$ in $X$. Now if $d = 0$, then both $x$ and $y$ are maximal in $X$, hence we 
have $f_y = e_x$. Therefore either $x = y$ and $x\vee y  = x$ and hence $e_x \cdot f_y =e_x = f_y$ or their join does not exist and hence $e_x\cdot f_y = 0$.

Next suppose $d>0$ and that the proposition holds for all smaller $d$. We claim that if $f_x\cdot f_y$ is non-zero, then $x=y$. Indeed we have
\begin{eqnarray*}
f_x\cdot f_y &=& \sum_{x\le z, y\le t}\mu_{x, z}\mu_{y, t} e_z\cdot e_t. 
\end{eqnarray*}
Here the product $e_z\cdot e_t$ is non-zero if and only if the join $z\vee t$ exists, in which case, $x\vee y$ also
exists and $x\vee y\le z\vee t$. Hence the above equality becomes
\begin{eqnarray*}
f_x\cdot f_y &=& \sum_{x\vee y\le z}\mu_{x, z}\mu_{y, z} e_z. 
\end{eqnarray*}
In particular $e_{x\vee y}\cdot f_x\cdot f_y = f_x\cdot f_y \not = 0$. Hence by the first part, either $x < x\vee y$ or $y< x\vee y$ and we get $e_{x\vee y}\cdot f_x\cdot f_y = 0$ by induction. This is a contradiction, so we must have $x=y$. 

Finally if $x = y$, then
\[
e_x = e_x^2 = f_x^2 + \sum_{x<y} f_y
\]
and hence $f_x^2 = f_x$ as required.
\end{proof}

Note that the elements $e_x, x\in X$ generates a commutative subalgebra $\bigoplus_{x\in X} \mathbb Z e_x$ 
of $E_G^c$ and the set $\{f_x | x\in X\}$ is a basis of this algebra consisting of orthogonal idempotents summing up to the identity element.  
\end{nothing}

\begin{nothing}
To resolve the structure of $E_G^c$ further, one needs more properties of the idempotents introduced above. The properties in \cite{fibered biset} depend on fibered bisets and it is not clear to us how to make an abstraction to the above case. In this paper we consider the special case of section Burnside functors \cite{slice ring}. 
\end{nothing}

\section{Section Burnside rings}
In this section we collect together definitions and basic results on section Burnside rings as defined in \cite{slice ring}. We also prove a version of Goursat Theorem for sections of direct products of groups. 

\begin{nothing}{\bf The categories $\Gmor$ and $\GmorGal$.}\quad
Let $G$ be a finite group. We denote the category of finite $G$-sets and $G$-equivariant morphisms by $\Gset$. For any pair of morphisms $f: X\to Y$ and $f^\prime: X^\prime \to Y^\prime$, a {\it morphism} from $f$ to $f^\prime$ is defined to be a pair $(\alpha: X\to X^\prime, \beta: Y\to Y^\prime)$ of morphisms of $G$-sets such that the following diagram commutes.
   \begin{center}
    
    \begin{tikzpicture}
        \matrix (m) [matrix of math nodes,row sep=3em,column sep=4em,minimum width=2em]
        {
            X & Y \\
             X^\prime & Y^\prime \\
        };
  \path[-stealth]
    (m-1-1) edge node [left] {$\alpha$} (m-2-1)
            edge node [above] {$f$} (m-1-2)
    (m-2-1.east|-m-2-2) edge node [below] {}
            node [above] {$f^\prime$} (m-2-2)
    (m-1-2) edge node [right] {$\beta$} (m-2-2);
    \end{tikzpicture}
    \end{center}
For $(\alpha, \beta): f\to f^\prime$ and $(\gamma, \delta): f^\prime \to f^{\prime\prime}$, we define the {\it composition} $(\gamma, \delta)\circ (\alpha, \beta)$ component-wise. Notice that the pair $(\id_X , \id_Y)$ becomes the identity morphisms of $f:X\to Y$. With this data, we form the category $\Gmor$ of morphisms of $G$-sets. By \cite[Proposition 2.2]{slice ring}, the disjoint union and the direct product of $G$-sets induce a coproduct and a product in $\Gmor$, respectively. 

A morphism $f: X\to Y$ is called a {\it Galois morphism} if for any $x, x^\prime \in X$ with $f(x) = f(x')$, we have
$f = f\circ \phi$ with $\phi(x) = x'$ for some automorphism $\phi$ of $X$ in $\Gset$. By \cite[Section 9]{slice ring}, the class of all Galois morphisms of morphisms of $G$-sets forms a subcategory of $\Gmor$ and the product and coproduct in $\Gmor$ restricts to a product and a coproduct in this subcategory. We denote the category of Galois morphisms by $\GmorGal$.

Isomorphisms of morphisms of $G$-sets are defined in the usual way and we denote the isomorphism class of the morphism $f: X\to Y$ by $[f: X\to Y]$. Given a slice $S\le T$ of $G$, we denote the isomorphism class of the projection morphism $G/S\to G/T$ by $\langle G/T, G/S \rangle$. Note that by \cite[Remark 9.5]{slice ring}, the morphism $G/S\to G/T$ is a Galois morphism if and only if $S\unlhd T$. 
\end{nothing}

\begin{nothing}{\bf Slice and section Burnside rings.}\quad
Following Bouc \cite{slice ring}, we define the {\it slice Burnside ring} $\Xi(G)$ of $G$ as the quotient of the free abelian group on the isomorphism classes $[f: X\to Y]$ of morphisms of $G$-sets by the subgroup generated by the relation
   \begin{equation*}
        [f_1\sqcup f_2:(X_1\sqcup X_2){\rightarrow} Y]-[f_1:X_1{\rightarrow} f(X_1)]-[f_2: X_2{\rightarrow} f(X_2)]
    \end{equation*}
whenever $f: X \rightarrow Y$ is a morphism of finite $G$-sets with a decomposition $X = X_1\sqcup X_2$ as a disjoint union of $G$-sets, where $f_1 = f_{|X_1}$ and $f_2 = f_{|X_2}$. The product is induced by the product of morphisms.
The image of $f$ in $\Xi(G)$ is denoted by $[f]_G$ and to simplify the notation further, we denote $\langle G/T, G/S \rangle$ by $\langle T, S \rangle_G$.

The subring of $\Xi(G)$ generated by the isomorphism classes of Galois morphisms of $G$-sets is called the 
{\it section Burnside ring} and is denoted by $\Gamma(G)$.

With this notation, by \cite[Lemma 3.3]{slice ring}, the zero element in $\Xi(G)$ is $[\emptyset\to \emptyset]_G$ and we have $[f: X\to Y]_G = [f: X\to f(X)]_G$ for any morphism $f:X \to Y$ of $G$-sets. Furthermore we have
\[
[f_1\sqcup f_2: X_1\sqcup X_2\to Y_1\sqcup Y_2]_G = [f_1: X_1\to Y_1]_G + [f_2: X_2\to  Y_2]_G
\]
In particular, by \cite[Lemma 3.4]{slice ring}, for any morphism $f:X\to Y$, there is a decomposition 
\[
[f]_G = \sum_{x\in [G\backslash X]} \langle G_{f(x)}, G_x \rangle_G
\]
where the sum is over a complete set of representatives of $G$-orbits in $X$. Therefore the slice Burnside ring $\Xi(G)$ has a basis 
$\langle T, S\rangle_G$ as $(T, S)$ runs over a complete set of representatives of $G$-conjugacy classes of pairs $(T, S)$ of subgroups $S\le T\le G$, see \cite[Theorem 4.6]{slice ring}. These pairs are called slices of $G$. Similarly, the subset consisting of the pairs parameterized by the representatives of the $G$-conjugacy classes of sections $(T, S)$ of $G$ forms a basis for the section Burnside ring $\Gamma(G)$.
\end{nothing}

\begin{nothing}{\bf The group $\Gamma(G, H)$.} Let $H$ be a finite group. We write $\Gamma(G, H)$ for the section Burnside ring of $G\times H$. It has a basis parameterized by the representatives of $G\times H$-conjugacy classes of sections of the direct product $G\times H$. As usual we shall consider the elements of $\Gamma(G, H)$ as morphisms from $H$ to $G$, and when considering as such, 
we denote the Galois morphism  $(X\xrightarrow{g} Y)$ of $G\times H$ and its image in $\Gamma(G, H)$, respectively, by
\begin{equation*}
    \Big (\frac{G\times H}{X\xrightarrow{g} Y}\Big ) \quad \mbox{\rm and} \quad \Big [\frac{G\times H}{X\xrightarrow{g} Y}\Big ] 
\end{equation*}
In case we have a section $(T, S)$ of $G\times H$, we denote the projection $\Big (\frac{G\times H}{S}\Big )\to \Big (\frac{G\times H}{T}\Big )$ and its image in $\Gamma(G, H)$, respectively, by
\begin{equation*}
    \Big (\frac{G\times H}{S\unlhd T}\Big ) \quad \mbox{\rm and} \quad \Big [\frac{G\times H}{S\unlhd T}\Big ] 
\end{equation*}
\end{nothing}
\begin{nothing}{\bf Digression on crossed modules.}\label{app:crossed}
It turns out that crossed modules are useful to express conditions on subgroups of direct products to be sections. We recall basic definitions. See \cite{Whitehead} for further details.

Let $G$ and $A$ be groups. Assume $G$ acts on $A$, written $(g,a)\mapsto {^g}a$ for $g\in G, a\in A$ and also suppose that there is a group homomorphism $\partial : A\rightarrow G$.  The triple $(A,G,\partial)$ is called a $\textbf{crossed module}$ if the following two conditions hold.
        \begin{enumerate}[(i)]
             \item $\partial ({^g}a) = g\partial (a) g{^{-1}},$
             \item ${^{\partial(b)}}a = bab^{-1}$
        \end{enumerate}
    for all $g\in G$ and $a,b\in A$.

Let $(A, G, \partial)$ and $(A',G',\partial')$ be crossed modules. We define a {\bf morphism} from $(A, G, \partial)$ to   $(A',G',\partial')$ as a pair $(\alpha, \beta)$ of group homomorphisms $\alpha: A\to A'$ and $\beta: G\to G'$ 
satisfying
   \begin{enumerate}
\item[(a)] The diagram
    \begin{equation*}
    	\begin{tikzcd}[column sep=3em,row sep=3em]
	    A \arrow[r, "\alpha"] \arrow[d, "\partial"]
    	& A' \arrow[d, "\partial'" ] \\
    	G \arrow[r, "\beta " ]
    	& G'
        \end{tikzcd}
    \end{equation*}
    commutes.
	\item[(b)] For all $g\in G$ and $a\in A$, we have $\alpha({^g}a) = {^{\beta(g)}}\alpha(a)$.
	\end{enumerate}

The pair $(\alpha, \beta)$ is called an isomorphism, a monomorphism, an epimorphism or an automorphism of crossed modules if the homomorphisms $(\alpha, \beta)$ are both isomorphisms, monomorphisms,  epimorphisms or automorphisms, respectively. Hence the group of automorphisms of $(A,G,\partial)$ is defined and denoted by $\Aut (A,G,\partial)$.

Let $\alpha: G\to \Aut(A), g\mapsto \alpha_g$ be the action of $G$ on $A$ and $c: G\to \Aut(G), g\mapsto c_g$ be the conjugation action of $G$ on itself. For any $g\in G$ the pair $(\alpha_g, c_g)$ is an automorphism of $(A, G, \partial)$ and it is called an inner automorphism. The induced function $\theta: G\to 
\Aut(A, G, \partial)$ is a homomorphism, its image is a normal subgroup, denoted by $\Inn (A,G,\partial)$. As usual the quotient $\Out (A,G,\partial) = \Aut (A,G,\partial)/ \Inn (A,G,\partial)$ is called the group of outer automorphisms of $(A,G,\partial)$. 
\end{nothing}

\begin{nothing}{\bf Goursat Theorem for sections.}\quad
Since a basis for $\Gamma(G, H)$ is given in terms of sections of $G\times H$, we first parametrize these sections in a way as Goursat Theorem parametrizes subgroups of direct products. 
Let $U$ be a subgroup of $G\times H$. We set 
\begin{enumerate}
\item[(i)] $p_1(U) = \{g\in G \mid (g,h)\in U\, \mbox{\rm for some} \, h\in H\}$, $p_2(U) = \{h\in H \mid (g,h)\in U\, \mbox{\rm for some} \, g\in G\}$,
\item[(ii)] $k_1(U) = \{g\in G \mid (g,1)\in U\}$, $k_2(U) = \{h\in H \mid (1,h)\in U\}$,
\item[(iii)] $\eta_U: p_2(U)/k_2(U) \to p_1(U)/k_1(U), h\cdot k_2(U)\mapsto g\cdot k_1(U)$ provided $(g, h)\in U$.
\end{enumerate}
By Goursat' Theorem, the map $\eta_U$ above is a group isomorphism and the correspondence mapping $U$ to the quintuple $(p_1(U), k_1(U), \eta_U, k_2(U), p_2(U))$ establishes a bijective correspondence between the subgroups $U$ of $G\times H$ and the quintuples $(P, K, \eta, L, Q)$ with $K\unlhd P\le G$ and $L\unlhd Q\le H$ and
$\eta: Q/L \to P/K$ a group isomorphism. We call $(p_1(U), k_1(U), \eta_U, k_2(U), p_2(U))$ the {\it Goursat correspondent} of $U$. Also we call a quintuple $(P, K, \eta, L, Q)$ satisfying the above conditions a {\it Goursat quintuple}.

To determine a similar correspondence for the sections of $G\times H$, let $(T, S)$ be a section of $G\times H$ with the corresponding quintuples $(P_T, K_T, \eta_T, L_T, Q_T)$ and $(P_S, K_S, \eta_S, L_S, Q_S)$. Then by Goursat Theorem, we have
\begin{enumerate}[(i)]
    \item[(S1)] $1\le K_X\unlhd P_X\le G$ and $1\le L_X\unlhd Q_X\le H$ for $X\in \{ T, S\}$.
     \item[(S2)] $\eta_S$ and $\eta_T$ induced respectively by $S$ and $T$  are isomorphisms.
   \end{enumerate}
In addition the condition $S\unlhd T$ implies that the following statements also hold:
\begin{enumerate}
    \item[(S3)] $K_S\unlhd K_T$ and $L_S\unlhd L_T$. 
 
    \item[(S4)] $K_S, P_S\unlhd P_T$ and $L_S, Q_S\unlhd Q_T$,
   \end{enumerate}  
  Here (S3) is straightforward. To prove that $K_S\unlhd P_T$, let $x\in K_S$ and $g\in P_T$ so that $(x,1)\in S $ and $(g,h) \in T$ for some $h\in H$. Then since S is normal in T, we get $({}^{g}x,1) = {}^{(g,h)}(x,1)\in S$ In particular ${^g}x \in K_S$. Similarly if $x\in P_S$ with $(x,y)\in S$ then$({^g}x,{^h}y)=^{(g,h)}(x,y)\in S$, that is, ${^g}x\in P_S$. The rest is proved in the same way.  
 \begin{enumerate}
    \item[(S5)] $[K_T, P_S] \le K_S$ and $[L_T, Q_S] \le L_S$
  \end{enumerate}  
Indeed let $x\in P_S$ with $(x, y)\in S$ and $g\in K_T$. Note that $(g, 1)\in T$. Then since S is normal in T, we get ${}^{(g, 1)}(x,y) = ({}^gx, y)\in S$. In particular $x^{-1}\cdot{}^gx\in K_S$ which implies $xK_S={}^gx K_S$, as required. Note that this condition is equivalent to the following condition. 
 \begin{enumerate}  
    \item[(S5$'$)]   $K_T/K_S\le C_{P_T/K_S}(P_S/K_S)$ and $L_T/L_S\le C_{Q_T/L_S}(Q_S/L_S)$.
  \end{enumerate}    
  
By Condition (S4), we have that $P_T/K_S$ contains $P_S/K_S$ as a subgroup and $P_T/K_T$ as a quotient. We have the following diagram.
\[
\begin{tikzcd}
 & P_T/K_S \arrow{dr}{\pi} \\
P_S/K_S \arrow{ur}{\iota} && P_T/K_T
\end{tikzcd}
\] 
We denote the composition by $\partial = \pi\circ\iota$, it is given by $\partial(xK_S) = xK_T$. 
Together with (S5), the conjugation action of $P_T/K_S$ on $P_S/K_S$ descents to a conjugation action of $P_T/K_T$ on $P_S/K_S$. Now we get
\begin{enumerate}
    \item[(S6)] $(P_S/K_S, P_T/K_T, \partial)$ and $(Q_S/L_S, Q_T/L_T, \partial)$ are crossed modules.
\end{enumerate}  
To prove (S6) one needs to justify the two conditions given in Section \ref{app:crossed}. For the first condition,
let $gK_T\in P_T/K_T$ and $xK_S\in P_S/K_S$. Then
\[
\partial({}^{gK_T} xK_S) = \partial( {}^g xK_S) = {}^gxK_T = {}^{gK_T}\partial(xK_S),
\]
hence the first condition of the definition holds. For the second condition, let also $yK_S\in P_S/K_S$. Then
\[
{}^{\partial(xK_S)}yK_S = {}^{xK_T}yK_S = {}^xyK_S = {}^{xK_S}yK_S
\]
which proves  the second condition. Thus $(P_S/K_S, P_T/K_T, \partial)$ is a crossed module. Replacing $K$ with $L$ and $P$ with $Q$ in the above arguments, one can also prove that $(Q_S/L_S, Q_T/L_T, \partial)$ is a crossed module. With this identifications, the following also holds.
\begin{enumerate}
    \item[(S7)] $(\eta_S, \eta_T): (Q_S/L_S, Q_T/L_T, \partial)\to (P_S/K_S, P_T/K_T, \partial)$ is an isomorphism of crossed modules. 
\end{enumerate}  
Since $\eta_S$ and $\eta_T$ are already isomorphisms, we only need to check that the pair $(\eta_S, \eta_T)$ is a morphism of crossed modules, see Section \ref{app:crossed}.  Let $qL_S\in Q_S/L_S$. and let $\eta_S(qL_S) = pK_S$. Then
\[
\eta_T(\partial(qL_S)) = \eta_T(qL_T) = pK_T
\]
where the last equation holds since $(p, q)\in S$ implies it is in $T$. On the other hand
\[
\partial(\eta_S(qL_S)) = \partial(pK_S) = pK_T
\]
and hence the diagram in Section \ref{app:crossed} commutes. For the other condition, let also $(g, h)\in T$ so that $\eta_T(hL_T) = gK_T$. Then
\[
\eta_S({}^{hL_T}qL_S) = \eta_S({}^hqL_S) = {}^gpK_S.
\]
Here the last equality holds since $S\unlhd T$. We clearly have
\[
{}^gpK_S = {}^{\eta_T(hL_T)}\eta_S(qL_S).
\] 
Hence $(\eta_S, \eta_T)$ is a morphism of crossed modules and hence we proved (S7). Collecting the properties we obtain the following version of Goursat' Theorem for sections.
\begin{theorem}\label{thm:Goursat}
Let $G$ and $H$ be finite groups. Then there is a bijective correspondence between
\begin{enumerate}[(a)]
\item the set of all sections $(T,S)$ of $G\times H$ and
\item the set of all pairs $((P_1, K_1, \eta_1, L_1, Q_1), (P_2, K_2, \eta_2, L_2, Q_2))$ of Goursat quintuples satisfying the following conditions.
\begin{enumerate}
\item[(S3)] $K_2\unlhd K_1$ and $L_2\unlhd L_1$, 

    \item[(S4)] $K_2, P_2\unlhd P_1$ and $L_2, Q_2\unlhd Q_1$,
 
    \item[(S5)] $[K_1, P_2]\le K_2$ and $[L_1, Q_2] \le L_2$.
     \item[(S6)] $(P_2/K_2, P_1/K_1, \partial)$ and $(Q_2/L_2, Q_1/L_1, \partial)$ are crossed modules where $\partial$ is given by $\partial(xA_2) = xA_1$ for $A\in\{K, L\}$ and in both cases, the action is given by conjugation.
    \item[(S7)] $(\eta_2, \eta_1): (Q_2/L_2, Q_1/L_1, \partial)\to (P_2/K_2, P_1/K_1, \partial)$ is an isomorphism of crossed modules. 
\end{enumerate}
\end{enumerate} 
The correspondence is given by mapping a section $(T, S)$ of $G\times H$ to the pair of Goursat correspondents of $T$ and $S$.
\end{theorem}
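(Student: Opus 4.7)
The plan is as follows. The forward direction, mapping a section $(T,S)$ to the pair of Goursat correspondents of $T$ and $S$, is already shown to land in the target set (b): the derivations of (S3)--(S7) carried out in the paragraphs preceding the theorem supply exactly this. Moreover, this assignment is injective, since classical Goursat already gives a bijection between subgroups of $G\times H$ and Goursat quintuples, so $T$ and $S$ are each recovered uniquely from their quintuples. The remaining task is surjectivity, i.e., sufficiency of the conditions.

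To that end, I would start with a pair of Goursat quintuples $(P_1, K_1, \eta_1, L_1, Q_1)$ and $(P_2, K_2, \eta_2, L_2, Q_2)$ satisfying (S3)--(S7), and let $T, S \le G\times H$ be the subgroups produced by classical Goursat. It suffices to verify that $(T, S)$ is a section, i.e., $S \le T$ and $S \unlhd T$. For the inclusion $S\le T$, take $(x,y)\in S$, so $x\in P_2$, $y\in Q_2$, and $\eta_2(yL_2) = xK_2$. By (S3) and (S4) one has $x\in P_1$ and $y\in Q_1$, and commutativity of the crossed-module morphism square from (S7) gives $\eta_1 \circ \partial = \partial \circ \eta_2$, whence
\[
\eta_1(yL_1) = \partial(\eta_2(yL_2)) = \partial(xK_2) = xK_1,
\]
so $(x,y)\in T$ by classical Goursat.

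For normality, take $(x,y)\in S$ and $(g,h)\in T$. The normalities in (S4) yield $gxg^{-1}\in P_2$ and $hyh^{-1}\in Q_2$. Applying the equivariance half of the morphism condition in (S7) gives
\[
\eta_2(hyh^{-1}L_2) = \eta_2({}^{hL_1}yL_2) = {}^{\eta_1(hL_1)}\eta_2(yL_2) = {}^{gK_1}(xK_2) = gxg^{-1}K_2,
\]
so $(gxg^{-1}, hyh^{-1})\in S$, establishing $S\unlhd T$. The $H$-side argument is identical after interchanging the roles of $(P,K)$ with $(Q,L)$ and of $\eta_i$ with its inverse where appropriate.

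The one delicate point is the well-definedness of the conjugation action of $P_1/K_1$ on $P_2/K_2$ used above: independence of the $K_1$-representative of $g$ requires $[K_1, P_2] \le K_2$, which is (S5), while independence of the $K_2$-representative of $x$ requires $K_2 \unlhd P_1$, which is part of (S4). Once this bookkeeping is in place the rest is routine; the main (modest) obstacle is keeping the notation straight and confirming that the crossed-module language packages exactly the compatibilities between the two quintuples that are needed for $S\unlhd T$ and not just $S\le T$. The conceptual content has already been extracted in the discussion preceding the theorem, so sufficiency becomes a line-by-line reversal of that derivation.
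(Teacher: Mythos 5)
Your proof is correct and follows the same route as the paper: the forward direction and injectivity are supplied by the discussion preceding the theorem together with classical Goursat, and surjectivity is established by constructing $S$ and $T$ from the quintuples and verifying $S \unlhd T$. You fill in the details the paper labels "straightforward," and in fact your attribution is slightly more precise than the paper's terse remark (which assigns $S\subseteq T$ to (S6)): both the containment $S\le T$ and normality $S\unlhd T$ come from the two halves of the crossed-module-morphism condition (S7), with (S3)--(S5) guaranteeing the well-definedness of $\partial$ and of the conjugation action, while (S6) is then automatic.
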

\begin{proof}
By the arguments preceding the theorem, the mapping is well-defined and injective. Hence it is sufficient to prove that any element of the later set corresponds to a section in $G\times H$. Let $((P_1, K_1, \eta_1, L_1, Q_1), (P_2, K_2, \eta_2, L_2, Q_2))$ be a pair of Goursat quintuples satisfying the above conditions. Also let $(T, S)$ be the pair of subgroups of $G\times H$ corresponding to these Goursat quintuples. Explicitly we have
\[
S= \{(x, y)\in P_2\times Q_2 \mid \eta_2(yL_2) = xK_2\}\,\, \mbox{\rm and}\,\, T= \{(g, h)\in P_1\times Q_1 \mid \eta_1(hL_1) = gK_1\}
\] 
We have to show that $S\unlhd T$. It is straightforward to show that $S\subseteq T$ is equivalent to (S6) and $T\subset N_G(S)$ is equivalent to (S7). Note that although the conditions (S3)-(S4)-(S5) are not used explicitly, they are needed for the last two conditions to make sense. 
\end{proof}

\begin{nothing}{\bf Invariants of sections.}
Let $G$ and $H$ be finite groups. We denote the set of all sections of $G$ by $\Sigma_G$. Let $(T,S)\in \Sigma_{G\times H}$. We associate to $(T,S)$ its left and right invariants
$$l(T,S) := (p_1(T),k_1(T),p_1(S),k_1(S))$$ and
$$r(T,S) := (p_2(T),k_2(T),p_2(S),k_2(S))$$
respectively. In some cases, the middle invariants $\;l_0(T,S):= (k_1(T),p_1(S))$ and $r_0(T,S):=(k_2(T),p_2(S))$ will be more useful. The following result states further relations between Goursat correspondents.
\end{nothing}
\begin{prop}\label{sections prop}
Let $G$ and $H$ be finite groups, $(T,S)\in \Sigma_{G\times H}$ with Goursat correspondents 
$(P_T, K_T, \eta_T, L_T, Q_T) $ and $(P_S, K_S, \eta_S, L_S, Q_S)$.
Then there are group isomorphisms
\begin{enumerate}[(a)]
\item      \begin{equation*}
            \frac{P_S}{P_S\cap K_T} \cong \frac{Q_S}{Q_S\cap L_T},
        \end{equation*}
        
 \item       \begin{equation*}
            \frac{P_T}{P_S K_T} \cong \frac{Q_T}{Q_S L_T} 
        \end{equation*}
        and
  \item      \begin{equation*}
            \frac{P_S\cap K_T}{K_S} \cong \frac{Q_S\cap L_T}{L_S}.
        \end{equation*}
         \end{enumerate}
In particular if $P_T = G$, $K_S = 1$ and $K_T\le P_S$ then $|G|\le |H|$. 
\end{prop}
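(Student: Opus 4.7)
The plan is to prove all three isomorphisms in parts (a)--(c) by exploiting the Goursat isomorphisms $\eta_S$ and $\eta_T$ and transferring them to suitable subquotients, then deduce the final inequality by counting orders.

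For part (c), I would start by restricting $\eta_S$ to a map $(Q_S\cap L_T)/L_S \to (P_S\cap K_T)/K_S$. The key observation is that for $(x,y)\in S$, the element $y$ lies in $L_T$ if and only if $x$ lies in $K_T$: indeed, $y\in L_T$ means $(1,y)\in T$ (so $\eta_T(yL_T)$ is trivial), hence for $(x,y)\in S\subseteq T$ we get $\eta_T(yL_T)=xK_T=K_T$, i.e., $x\in K_T$; the converse is symmetric. Since $L_S \le L_T$ and $L_S\le Q_S$ by (S3), we have $L_S \le Q_S \cap L_T$, so the restriction is well-defined. Because $\eta_S$ is a bijection, this restriction is an isomorphism, giving (c).

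For (a), I would define a map $\varphi: P_S \to Q_S/(Q_S\cap L_T)$ by $x\mapsto y(Q_S\cap L_T)$ whenever $(x,y)\in S$. It is well-defined since any two such $y$'s differ by an element of $L_S \le Q_S\cap L_T$, and it is clearly surjective because every $y\in Q_S$ is paired with some $x\in P_S$. The kernel computation uses the same observation as above: $\varphi(x)$ is trivial iff some (equivalently, every) $y$ paired with $x$ lies in $Q_S\cap L_T$, which happens iff $x\in P_S\cap K_T$. The first isomorphism theorem then yields (a). For (b) I would argue analogously using $\eta_T$: the subgroup $Q_SL_T/L_T$ of $Q_T/L_T$ corresponds under $\eta_T$ to $P_SK_T/K_T$ (since $S\subseteq T$ means $\eta_S$ is the restriction of $\eta_T$ to the relevant quotients), and passing to the quotient by these subgroups gives the desired isomorphism.

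For the final cardinality statement, assume $P_T=G$, $K_S=1$, and $K_T\le P_S$. Then $P_S\cap K_T=K_T$, and $K_S=1$ implies part (c) collapses to $K_T \cong (Q_S\cap L_T)/L_S$, so $|K_T|\le |L_T|/|L_S|\le |L_T|$. From $\eta_T: Q_T/L_T\liso P_T/K_T$ we get $|P_T|/|K_T|=|Q_T|/|L_T|$, hence $|G|=|P_T|=|Q_T|\cdot |K_T|/|L_T|\le |Q_T|\le |H|$. The only step requiring real care is verifying the well-definedness and surjectivity arguments in (a) and (b); these are largely bookkeeping once the key equivalence ``$x\in K_T \Leftrightarrow y\in L_T$ for $(x,y)\in S$'' is in hand, so I would not expect any serious obstacle.
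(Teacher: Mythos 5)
Your proof is correct and follows essentially the same route as the paper: parts (a)--(c) are obtained by transferring the Goursat isomorphisms $\eta_S$ and $\eta_T$ to the appropriate subquotients, with the pivotal observation that for $(x,y)\in S$ one has $x\in K_T$ if and only if $y\in L_T$. Your final cardinality step is a modest streamlining of the paper's argument (which factors $|G|$ and $|H|$ along the full chain $K_S\le P_S\cap K_T\le K_T\le P_SK_T\le P_T\le G$ and cancels the middle ratios via (a)--(c)), but it rests on the same isomorphisms and reaches the same bound.
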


\begin{proof}  (a) First note that since $K_T$ and $P_S$ are both normal in $P_T$, the intersection $P_S\cap K_T$ is normal in $P_S$. Hence the isomorphism $\eta_S: Q_S/L_S \to P_S/K_S$ pre-composed by $\pi: Q_S\to Q_S/L_S$ and post-composed with $\pi: P_S/K_S \to P_S/(P_S\cap K_T)$ becomes a surjective homomorphism $$\tilde\eta_S: Q_S\to P_S/(P_S\cap K_T).$$ The kernel of $\tilde\eta_S$ is clearly $Q_S\cap L_T$. Note that by the second isomorphism theorem we also have the isomorphisms  
 $$P_SK_T/K_T\cong P_S/(P_S\cap K_T)\cong Q_S/(Q_S\cap L_T)\cong Q_SL_T/L_T.$$ 
        (b) As in the above case, post-compose $\eta_T$ with the canonical homomorphism $\pi: P_T/K_T\to P_T/(P_SK_T)$ to obtain 
        \[
        \tilde\eta_T : Q_T/L_T\to P_T/(P_SK_T)
        \]
Let $(p, q)\in T$. Then we have $qL_T\in \ker \tilde\eta_T$ if and only if $pK_T\in P_SK_T/K_T$. We claim that the last condition is equivalent to $qL_T\in Q_SL_T/L_T$. There is nothing to prove if $p\in K_T$. Suppose $p\in P_S$. It is sufficient to prove that $qL_T\in Q_SL_T/L_T$ and this follows directly from the compatibility of $\eta_S$ and $\eta_T$. Hence the kernel of $\tilde\eta_T$ is $Q_SL_T/Q_S$ and the result follows from the first and the third isomorphism theorems.    
        
        (c) Consider the restriction $\bar\eta_S$ of $\eta_S$ to the subgroup $(Q_S\cap L_T)/L_S$. Since 
    $\eta_S$ is an isomorphism, the restriction of $\bar\eta_S$ to its image is an isomorphism. To determine its 
    image, let $q\in Q_S\cap L_T$ and let $p\in G$ be such that $(p, q)\in S$. Then since $(1, q)\in T$, we also 
    get $(p, 1)\in T$, that is, $p\in K_T$. Therefore $pK_S$ is contained in $(P_S\cap K_T)/K_S$, as required.
     
     For the final statement, we write the order of $G$ as
     $$|G| = |G:P_T|\cdot |P_T:P_SK_T|\cdot |P_SK_T:K_T|\cdot |K_T:P_S\cap K_T|\cdot |P_S\cap K_T:K_S|\cdot |K_S|$$ and similarly the order of $H$ as
      $$|H| = |H:Q_T|\cdot |Q_T:Q_SL_T|\cdot |Q_SL_T:L_T|\cdot |L_T:Q_S\cap L_T|\cdot |Q_S\cap L_T:L_S|\cdot |L_S|$$
      Then with the above isomorphisms, we have
      \begin{equation*}
          \frac{|G|}{|H|} = \frac{|G:P_T|\cdot|K_T:P_S\cap K_T|\cdot|K_S|}{|H:Q_T|\cdot|L_T:Q_S\cap L_T|\cdot|L_S|}
      \end{equation*}
      Since by the assumption $P_T = G$, $K_S = 1$ and $K_T\le P_S$ we deduce that $\frac{|G|}{|H|}\le 1$.
\end{proof}
\end{nothing}


\begin{nothing}{\bf Composition product of sections.} \label{composition}

\end{nothing}
 Given finite groups $G$, $H$, $K$ and Galois morphisms $\Big(\frac{G\times H}{X\xrightarrow{g} Y}\Big)$ and 
 $\Big(\frac{H\times K}{X'\xrightarrow{f} Y'}\Big)$, we define their composition as the morphism
 \[ 
 \left( \frac{G\times K}{X\times_H X'\xrightarrow{g\times_H f} Y\times_H Y'}\right) 
\] 
Here $X\times_H X'$ and $Y\times_H Y'$ are the usual amalgamated products of bisets and 
$(g\times _H f)(x,_{H} x') = \big (g(x),_H f(x')\big )$.  It is a Galois Morphism of $G\times K$-sets by Proposition 9.13 of \cite{slice ring}. Moreover the above composition induces a bilinear associative product
\[
\Gamma(G, H)\times \Gamma(H, K) \to \Gamma(G, K).
\]
 The following corollary gives an explicit formula for the composition of two bases elements of the section Burnside ring. 
 
\begin{coro}{(Mackey Formula)}\label{Mackey Formula}
Let $(T,S)\in \Sigma_{G\times H}$ and $(V,U)\in \Sigma_{H\times K}$. There exists an isomorphism of Galois morphisms
 \begin{equation*}
     \Big(\frac{G\times H}{S\unlhd T}\Big)\times_{H} \Big(\frac{H\times K}{U\unlhd V}\Big)\cong \bigsqcup_{t\in p_2(S)\setminus H /p_1(U)}\Big(\frac{G\times K}{S*{^{(t,1)}U}\unlhd T*{^{(t,1)}V}}\Big)
 \end{equation*}
 where $S*U := \{(g,k)\in G\times K\, |$ there exists $h\in H$ with $(g,h)\in S$ and $(h,k)\in U\}.$
 \end{coro}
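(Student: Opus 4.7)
The plan is to reduce the Corollary to the ordinary Mackey formula for biset composition (which is already proved as Proposition 9.13 of \cite{slice ring} for the underlying $G$-set level) and then verify that the induced projections between the components are exactly the Galois projections claimed on the right-hand side.

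First, I would apply the standard biset-level Mackey formula to both the source and the target of the morphism:
\[
(G\times H)/S \times_H (H\times K)/U \cong \bigsqcup_{t\in p_2(S)\backslash H/p_1(U)} (G\times K)/(S*{}^{(t,1)}U),
\]
\[
(G\times H)/T \times_H (H\times K)/V \cong \bigsqcup_{s\in p_2(T)\backslash H/p_1(V)} (G\times K)/(T*{}^{(s,1)}V).
\]
Since $S\unlhd T$ and $U\unlhd V$, we have $p_2(S)\le p_2(T)$ and $p_1(U)\le p_1(V)$, so every double coset $p_2(S)\,t\,p_1(U)$ is contained in a unique double coset $p_2(T)\,t\,p_1(V)$. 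Thus the fibered product $\mathrm{proj}\times_H\mathrm{proj}$ sends the $t$-component of the source decomposition into the component of the target decomposition indexed by that coarser double coset.

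Next I would track a representative through the two Mackey isomorphisms: a class $(g,1)(S*{}^{(t,1)}U)$ in the $t$-summand of the source corresponds to the class of $\bigl((g,t),(1,1)\bigr)$ in $(G\times H)/S\times_H (H\times K)/U$, and its image under $\mathrm{proj}\times_H\mathrm{proj}$ is the class of $\bigl((g,t),(1,1)\bigr)$ modulo $T$ and $V$, which corresponds to $(g,1)(T*{}^{(t,1)}V)$ in the target summand. So the map induced on each component is indeed the canonical projection from $(G\times K)/(S*{}^{(t,1)}U)$ to $(G\times K)/(T*{}^{(t,1)}V)$. It remains to check that $S*{}^{(t,1)}U \unlhd T*{}^{(t,1)}V$: given $(g,k)\in S*{}^{(t,1)}U$ via some $h\in H$ and $(g',k')\in T*{}^{(t,1)}V$ via some $h'\in H$, one conjugates and uses $S\unlhd T$ together with ${}^{(t,1)}U\unlhd {}^{(t,1)}V$ to conclude $^{(g',k')}(g,k)\in S*{}^{(t,1)}U$.

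Putting the three ingredients together, the composition decomposes as the disjoint union, indexed by $t\in p_2(S)\backslash H/p_1(U)$, of the Galois projections claimed in the statement. The main obstacle is the third step: explicit bookkeeping through the amalgamated product to confirm that the component maps are the canonical Galois projections and that the normality condition $S*{}^{(t,1)}U\unlhd T*{}^{(t,1)}V$ holds for every double coset representative $t$, not only those parametrising the coarser decomposition.
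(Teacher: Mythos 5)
Your approach of applying the biset Mackey formula to the source $(G\times H)/S\times_H(H\times K)/U$ and to the target $(G\times H)/T\times_H(H\times K)/V$ separately is the right starting point, and you correctly observe that the target decomposition is indexed by the potentially \emph{coarser} set $p_2(T)\backslash H/p_1(V)$. But your concluding claim that ``the composition decomposes as the disjoint union, indexed by $t\in p_2(S)\backslash H/p_1(U)$, of the Galois projections claimed in the statement'' is exactly where the argument breaks. When two double cosets $p_2(S)\,t\,p_1(U)$ and $p_2(S)\,t'\,p_1(U)$ lie in the same double coset $p_2(T)\,s\,p_1(V)$, the orbits $X_t$ and $X_{t'}$ of the source both map onto the single orbit $Y_s$ of the target, so the morphism $(\mathrm{proj}\times_H\mathrm{proj})$ does \emph{not} split as a disjoint union of morphisms indexed by $t$: the right-hand side of the corollary has one copy of $(G\times K)/(T*{}^{(t,1)}V)$ for each $t$, whereas the actual target has only one copy per coarser coset. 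You flagged an ``obstacle'' in your last sentence, but misidentify it as a bookkeeping issue about normality; the real issue is the mismatch between the number of source orbits and target orbits, which cannot be repaired by tracking representatives more carefully.

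The missing ingredient is precisely the relation defining the slice/section Burnside ring, recorded in the paper as \cite[Lemma 3.3]{slice ring} (or its consequence \cite[Lemma 3.4]{slice ring}): in $\Gamma(G\times K)$ one has $[f\colon X\to Y]=[f\colon X\to f(X)]$, and more usefully
\[
[f]_{G\times K}=\sum_{x\in[(G\times K)\backslash X]}\langle (G\times K)_{f(x)},\,(G\times K)_x\rangle_{G\times K}.
\]
This identity is what allows one to decompose $[f\times_H g]$ as a sum indexed by the source orbits alone, ignoring how those orbits are grouped in the target. Combined with the biset-level Mackey formula, which identifies $(G\times K)_x$ with $S*{}^{(t,1)}U$ and $(G\times K)_{f(x)}$ with $T*{}^{(t,1)}V$ for a point $x$ in the $t$-th source orbit (the computation you carry out in your third paragraph), this gives the claimed formula. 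This is exactly the paper's one-line proof: ``the Mackey formula for bisets together with \cite[Lemma 3.3]{slice ring}.'' Without invoking that lemma, the statement you are proving is an identity in the section Burnside ring, not a literal isomorphism in $\GmorGal$, and an argument that stays entirely at the level of morphisms of $G\times K$-sets cannot close the gap.
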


\begin{proof}
  This follows directly from the Mackey formula for bisets together with 
 \cite[Lemma 3.3]{slice ring}.
\end{proof}





\begin{notation}
Let $f:X\rightarrow Y$ be a Galois morphism in $\Gamma (G\times H)$. We define the opposite $f^{op}\in \Gamma (H\times G)$ of $f$ as the Galois morphism $f^{op}:X^{op}\rightarrow Y^{op}$ where $f^{op}(x) = f(x)$ for any $x\in X$. Here the opposite biset $X^{op}$ is the $(H,G)$-biset equal to $X$ as a set with the action defined in the following way. For all $h\in H$, $x\in X$, $g\in G$, $h\cdot x\cdot g\; (in\; X^{op}) = g^{-1}\cdot x\cdot h^{-1}(in\; X)$. In particular, if $f = \Big ( \frac{G\times H}{S\unlhd T}\Big )$ then $f^{op} \cong \Big ( \frac{H\times G}{S^{op}\unlhd T^{op}}\Big )$, where $S^{op} := \{(h,g)\in H\times G\mid (g,h)\in S\}$.
\end{notation}

\begin{notation}
    Let $G$ and $H$ be finite groups. For any subgroups $A\le B\le H$ and any group homomorphism $\alpha :B\rightarrow G$, we set
    \begin{equation*}
        _{\alpha} \Delta(A) := \{(\alpha (a),a)\mid a\in A\}\le G\times H.
    \end{equation*}
    For any subgroups $C\le D\le G$ and any group homomorphism $\alpha :D\rightarrow H$, we set
    \begin{equation*}
        \Delta _{\alpha} (C) := \{(c,\alpha (c))\mid c\in C\}\le G\times H.
    \end{equation*}
    If $\alpha$ is the inclusion map of a subgroup, we write $\Delta (A)$ and $\Delta (C)$ respectively. 
\end{notation}

\begin{nothing}{\bf Basic bisets.} \label{operations}
Let $G$ be a finite group and $X$ be a $G$-set. The identity morphism $X\to X$ is a Galois morphism and induces a homomorphism from the Burnside ring $B(G)$ to $\Gamma(G)$. More explicitly if $X = G/H$ for some subgroup $H$ of $G$, then its image in $\Gamma(G)$ is $[H, H]_G$. In particular the basic bisets of induction, restriction, inflation and deflation have images in $\Gamma(G, G)$ as listed below: Let $H\le G$ and $N\unlhd G$ and $\pi:G\to G/N$ be the natural homomorphism.
    \begin{equation*}
    \Ind_H^G := \Big(\frac{G\times H}{\Delta(H)\unlhd \Delta(H)}\Big)\in \Gamma(G, H), \quad 
        \Res_H^G := \Big(\frac{H\times G}{\Delta(H)\unlhd \Delta(H)}\Big)\in \Gamma(H, G)    
\end{equation*}

\begin{equation*}
    \Inf_{G/N}^G := \Big(\frac{G\times G/N}{\Delta_{\pi}(G)\unlhd \Delta_{\pi}(G)}\Big)\in \Gamma(G, G/N),\quad
        \Def_{G/N}^G := \Big(\frac{G/N\times G}{_{\pi}\Delta(G)\unlhd {_{\pi}\Delta(G)}}\Big)\in \Gamma(G/N, G)
\end{equation*}
With these definitions we may decompose any section into a product as follows. The statement is similar to 
\cite[Proposition 2.8]{fibered biset} and the proof is similar to the proof of \cite[Lemma 2.3.26]{biset functor}. We leave the straightforward calculations to the reader.
\end{nothing}

\begin{prop}\label{decomp galois}
Let G and H be finite groups and $(T, S)$ be a section of $G\times H$ with $l(T,S) = (P_T,K_T,P_S,K_S)$ and $r(T,S)=(Q_T,L_T,Q_S,L_S)$. Define $\bar{G} := {P_T}/{K_S}$, $\bar{H} := {Q_T}/{L_S}$, $\bar{S} := can(S/{({K_S}\times{L_S})})\leq \bar{G}\times \bar{H}$ and $\bar{T} := can(T/{({K_S}\times{L_S})})\leq \bar{G}\times \bar{H}$ where 
 \begin{center}
     $can:({P_T}\times {Q_T})/({K_S}\times {L_S})\rightarrow \bar G\times\bar H$
 \end{center}
is the canonical homomorphism.
Then
    \begin{equation*}
        \Big(\frac{G\times H}{S\unlhd T}\Big)\cong \Ind_{P_T}^G\times_{P_T}    \Inf_{\bar G}^{P_T}\times_{\bar G} \left(\frac{\bar G\times \bar H}{\bar S\unlhd \bar T}\right) \times_{\bar H} \Def_{\bar H}^{Q_T}\times_{Q_T} \Res_{Q_T}^{H} 
    \end{equation*}
 Furthermore we have 
 \begin{center}
     $k_1(\bar{S}) = 1 $, $k_2(\bar{S}) = 1 $, and $p_1(\bar{T}) = \bar{G}$, $p_2(\bar{T}) = \bar{H}$.
 \end{center}
\end{prop}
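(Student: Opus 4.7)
The plan is to compute the right-hand side explicitly via four successive applications of the Mackey formula (Corollary~\ref{Mackey Formula}), showing at each stage that the composition collapses to a single double-coset summand and that the $*$-products simplify to recover $(S\unlhd T)$. Write $\pi_G: P_T\to\bar G$ and $\pi_H:Q_T\to\bar H$ for the canonical projections, and use the explicit descriptions of the basic bisets $\Ind,\Inf,\Def,\Res$ from paragraph~\ref{operations}.

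First I would compose $\Inf_{\bar G}^{P_T}$ with the middle section $\left(\frac{\bar G\times \bar H}{\bar S\unlhd\bar T}\right)$. Since $p_2(\Delta_{\pi_G}(P_T))=\bar G$, the relevant double-coset set is trivial, and the Mackey $*$-product yields the section $(A\unlhd B)$ of $P_T\times\bar H$, where $A=\{(g,y):(\pi_G(g),y)\in\bar S\}$ and $B$ is the analogue built from $\bar T$. Composing on the left with $\Ind_{P_T}^G$, the equality $p_2(\Delta(P_T))=P_T$ again forces a single double coset, and the $*$-product merely reinterprets $A,B$ as subsets of $G\times\bar H$. Next, composing on the right with $\Def_{\bar H}^{Q_T}$ uses $p_1({}_{\pi_H}\Delta(Q_T))=\bar H$, so a single double coset once more, and the resulting $*$-product becomes $\{(g,q)\in P_T\times Q_T:(\pi_G(g),\pi_H(q))\in\bar S\}$, namely the preimage $S\cdot(K_S\times L_S)$ of $\bar S$ under $\pi_G\times\pi_H$.

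The crucial observation is that $k_1(S)=K_S$ and $k_2(S)=L_S$ give $K_S\times\{1\}\subseteq S$ and $\{1\}\times L_S\subseteq S$, so $K_S\times L_S\subseteq S$ and the preimage collapses to $S$ itself; the analogous computation yields $T$. A final composition with $\Res_{Q_T}^H$ uses $p_1(\Delta(Q_T))=Q_T$, produces a trivial double coset, and leaves $S,T$ unchanged, delivering $(S\unlhd T)$ in $G\times H$ as claimed.

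The supplementary equalities are then immediate from the definition of $\bar S$ and $\bar T$: $(gK_S,1)\in\bar S$ forces $(g,1)\in S$ and hence $g\in K_S$, giving $k_1(\bar S)=1$; the case $k_2(\bar S)=1$ is symmetric. The surjections $p_1(\bar T)=P_T/K_S=\bar G$ and $p_2(\bar T)=Q_T/L_S=\bar H$ follow at once from $p_1(T)=P_T$ and $p_2(T)=Q_T$. The main obstacle is purely bookkeeping: keeping track of the various subgroups, their projections, and the correct identification of $*$-products aligned across four Mackey applications. No conceptual difficulty arises, matching the authors' remark that the calculations are straightforward.
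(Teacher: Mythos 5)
Your proof is correct and follows essentially the computational route the paper has in mind (it cites Bouc's Lemma~2.3.26 as the proof template): iterate the Mackey formula for compositions of sections, observe that each double-coset set is a singleton since the basic bisets for $\Ind$, $\Inf$, $\Def$, $\Res$ have full inner projections, and then use $K_S\times L_S\subseteq S\subseteq T$ to identify the resulting $*$-products with $S$ and $T$ themselves. The supplementary facts $k_1(\bar S)=k_2(\bar S)=1$, $p_1(\bar T)=\bar G$, $p_2(\bar T)=\bar H$ are handled correctly as well.
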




\section{The Green biset functor $\Gamma$}
The functor $\Gamma$ associating $\Gamma(G)$ to the finite group $G$ has a Green biset functor structure \cite[Theorem 10.7]{slice ring}. Explicitly, we make $\Gamma$ a biset functor as follows. Let $G$ and $H$ be finite groups, $U$ be a $(G, H)$-biset and 
$f: X\to Y$ be a Galois morphism of $H$-sets. Then the morphism 
\[
(f: X\to Y) \mapsto (U\times_H f: U\times_H X\to U\times_H Y)
\] 
induces a functor from $\HmorGal$ to $\GmorGal$ and hence an abelian group homomorphism from $\Gamma(H)$ to $\Gamma(G)$. Together with the product 
\[
\Gamma(G)\times \Gamma(G')\to \Gamma(G\times G'), (X\xrightarrow{f} Y, X'\xrightarrow{f'} Y')\mapsto ((X\times X') \xrightarrow{f\times f'} (Y\times Y'))
\]
the functor $G\mapsto \Gamma(G)$ becomes a Green biset functor.

As in Section 2.3, we associate the category $\mathcal P_{\Gamma}$ to the Green biset functor $\Gamma$ where the objects are all finite groups, the morphisms from $H$ to $G$ are given by $\Gamma(G\times H)$ and the composition is defined as follows. Given $\alpha\in\Gamma(H\times K)$ and $\beta\in\Gamma(G\times H)$, we have
\[
\beta\circ \alpha = \Gamma(\Def^{G\times\Delta(H)\times K}_{G\times K}\Res^{G\times H\times H\times K}_{G\times \Delta(H) \times K})(\beta\times \alpha).
\]
The next proposition shows that this composition is induced by the composition product we defined in the previous section.
\begin{prop}
Let  $\alpha  := \big[\frac{H\times K}{U\unlhd V}\big]$ and $\beta := \big[\frac{G\times H}{S\unlhd T}\big]$. Then
\[
\beta\circ \alpha = \beta \times_H\alpha.
\]
\end{prop}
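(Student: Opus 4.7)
The plan is to expand both sides and verify they agree, relying on Corollary \ref{Mackey Formula} to handle the right-hand side. By that corollary,
\[
\beta\times_H\alpha = \sum_{t\in p_2(S)\backslash H/p_1(U)} \Big[\frac{G\times K}{S*{}^{(t,1)}U\unlhd T*{}^{(t,1)}V}\Big],
\]
so it suffices to show that the left-hand side yields the same sum.

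First, property (iii) of the Green biset functor $\Gamma$ gives
\[
\beta\times\alpha = \Big[\frac{G\times H\times H\times K}{S\times U \unlhd T\times V}\Big] \in \Gamma(G\times H\times H\times K).
\]
Next I would compute $\Res^{G\times H\times H\times K}_{G\times\Delta(H)\times K}(\beta\times\alpha)$. Since the biset action on $\Gamma$ is induced by the corresponding operation on the underlying $G$-sets of a Galois morphism, this amounts to restricting the ambient action on the transitive source and target sets, and then applying Lemma 3.4 of \cite{slice ring} to decompose according to source orbits. The classical Mackey formula $\Res^G_M(G/L)\cong\bigsqcup_{x\in M\backslash G/L}M/(M\cap{}^xL)$, applied with $G=G\times H\times H\times K$, $M=G\times\Delta(H)\times K$ and $L=S\times U$, shows after a direct double-coset reduction that the relevant double cosets are parameterized by $p_2(S)\backslash H/p_1(U)$ with representatives $(1,1,t,1)$. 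This yields
\[
\Res(\beta\times\alpha) = \sum_{t\in p_2(S)\backslash H/p_1(U)} \Big[\frac{G\times\Delta(H)\times K}{A_t\unlhd B_t}\Big]
\]
where $A_t=(G\times\Delta(H)\times K)\cap{}^{(1,1,t,1)}(S\times U)$ and $B_t=(G\times\Delta(H)\times K)\cap{}^{(1,1,t,1)}(T\times V)$.

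Finally, applying $\Def^{G\times\Delta(H)\times K}_{G\times K}$, which at the set level takes orbits under the normal subgroup $1\times\Delta(H)\times 1$, one has to identify $A_t\cdot(1\times\Delta(H)\times 1)$ and $B_t\cdot(1\times\Delta(H)\times 1)$ modulo $1\times\Delta(H)\times 1$, inside the canonical quotient $(G\times\Delta(H)\times K)/(1\times\Delta(H)\times 1)\cong G\times K$. Unwinding the definitions gives
\[
A_t = \{(s,h,h,k)\mid (s,h)\in S,\ (h,k)\in{}^{(t,1)}U\},
\]
using ${}^{(t,1)}U=\{(tut^{-1},u')\mid (u,u')\in U\}$; projecting to $G\times K$ by killing the $\Delta(H)$ component turns the condition ``$\exists h$'' into exactly the $*$-product, so the image is $S*{}^{(t,1)}U$. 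The same computation with $T,V$ in place of $S,U$ produces $T*{}^{(t,1)}V$, and summing over $t$ gives the claimed formula.

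The main obstacle is the bookkeeping in this last step: verifying that the combined effect of intersection, conjugation by $(1,1,t,1)$, and quotient by $1\times\Delta(H)\times 1$ transforms $(S\times U,T\times V)$ into $(S*{}^{(t,1)}U,\,T*{}^{(t,1)}V)$ as a section of $G\times K$. This is essentially the Galois-morphism analogue of the standard biset identity $X\times_H Y\cong\Def\circ\Res(X\times Y)$, and an alternative and perhaps cleaner approach would be to first establish this identity for Galois morphisms at the categorical level in $\GmorGal$ and then deduce the proposition as a formal consequence together with Corollary \ref{Mackey Formula}.
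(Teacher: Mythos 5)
Your proposal is correct and follows essentially the same route as the paper's proof: both expand $\Def^{G\times\Delta(H)\times K}_{G\times K}\Res^{G\times H\times H\times K}_{G\times\Delta(H)\times K}(\beta\times\alpha)$, identify the double cosets $B\backslash A/(S\times U)$ with $p_2(S)\backslash H/p_1(U)$, and then compute the deflation termwise to recover the $*$-products via Corollary \ref{Mackey Formula}. You spell out the computation of $A_t$ and its image under the quotient $B\to G\times K$ somewhat more explicitly than the paper, which simply asserts the group isomorphism, but the strategy and key steps are identical.
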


\begin{proof}
First note that by definition $\beta \times \alpha = \big [\frac{G\times H\times H\times K}{S\times U\unlhd T\times V} \big]$. Now by  \cite[Theorem 10.6]{slice ring},
    \[
        \beta \circ \alpha =  \Def _{H\times K}^{B} \Res_B^A (\beta \times \alpha) = \Def_{H\times K}^B \big(\sum_{a\in [B\setminus A/ S\times U]}\big \langle(B\cap {^a}(T\times V)),(B\cap {^a}(S\times U))\big \rangle_B \big) = 
    \]
    \[
        \sum_{a\in [B\setminus A/S\times U]}\Big \langle \frac{(B\cap {^a}(T\times V))(1\times \Delta(G)\times 1)}{1\times \Delta(G) \times 1},\frac{(B\cap {^a}(S\times U))(1\times \Delta(G)\times 1)}{1\times \Delta(G) \times 1} \Big \rangle_{H\times K} 
    \]
where $A := G\times H\times H\times K $ and $B := G\times \Delta(H)\times K$.
Observe that $[B\setminus A/ (S\times U)] = [p_2(S)\setminus G/p_1(U)]$ via the bijection $Ba(S\times U)\mapsto p_2(S)g^{-1}\bar{g}p_1(U)$ for $a = (h,g,\bar{g},k)$.
Moreover we have a group isomorphism $\frac{(B\cap (T\times V))(1\times \Delta(G)\times 1)}{1\times \Delta(G) \times 1} \cong T*V$ given by the homomorphism $(g,1,1,k)[1\times \Delta (G) \times 1] \mapsto (g,k)$.

Thus we obtain
    \[
        \beta \circ \alpha = \sum _{g\in [p_2(S)\setminus G/p_1(U)]} \langle T*{^{(g,1)}}V,S*{^{(g,1)}}U \rangle_{H\times K}
    \]
which is equal to $\beta\times_H\alpha$ by the Mackey formula given in the previous section.
\end{proof}

Let $k$ be a commutative ring. We denote by $\mathcal P_\Gamma^k$ the category obtained from $\mathcal P_\Gamma$ by extending the coefficients of the morphisms to $k$, that is, by letting the morphisms from $H$ to $G$ to be $k\Gamma(G\times H)$ and extending the composition linearly. Then a $k$-linear functor $\mathcal P_\Gamma^k\to {}_k$mod is called a \emph{section biset functor (over $k$)}. We denote the functor category of all section biset functors by $\mathcal F = \mathcal F_\Gamma^k$.

As explained in Section 2, simple section biset functors are determined by the pairs $(G, V)$ where $G$ is a finite group and 
$V$ is a simple $E_G$-module. To obtain a parametrizing set for simple section biset functors, we study the structure of the endomorphism algebra $E_G$. Our results for the rest of the paper follow \cite{fibered biset} closely. We adapt the techniques from fibered biset functors to show that the section biset functors has a similar theory.

\section{The ring $E_G$ of endomorphisms}

In this section we investigate structure of the endomorphism ring $\Gamma(G\times G)$ in order to understand its module category. Throughout this section, let $k$ be a field of characteristic zero. We write $E_G = E_G^k = k\otimes \Gamma(G\times G)$. The main result in this section is Theorem \ref{k-algiso} which states that the covering algebra $E_G^c$ defined below is isomorphic to a direct product of matrix rings over certain group algebras. This is a version of Theorem 6.2 in \cite{fibered biset}. Some preliminary proofs are almost identical to the original versions stated and proved in \cite{fibered biset}. Whenever possible, we leave the verifications of the proofs to the reader. We present certain proofs in the appendix.

\begin{nothing}{\bf The poset $\G$.} 
We write $\G$ for the set of all pairs $(K,P)$ of normal subgroups of $G$ such that $[K, P] = 1$. Note that a pair
$(K, P)$ of normal subgroups of $G$ is in $\G$ if and only if $K\le C_G(P)$ if and only if $P\le C_G(K)$. 
There is a poset structure on $\G$ given by $(K,P)\preceq (L,Q)$ if $K\le L$ and $P\ge Q$. The pair $(1, G)$ is the minimum element and the pair $(G, 1)$ is the maximum element of this poset. In particular joins exists in $\G$ and given 
$(K, P), (L, Q)\in \mathcal G_G$, one has $(K, P)\vee (L, Q) = (KL, P\cap Q)$. 
\end{nothing}

\begin{nothing}
To apply the results in Section \ref{app:ipots}, for each $(K, P)\in \G$, we define
\[
E_{(K, P)} = \left( \frac{G\times G}{\Delta(P)\unlhd \Delta_K(G)} \right)
\]
where $$\Delta_K(G) = (K\times 1)\Delta(G) =\{ (g, h)\in G\times G | h^{-1}g\in K\}.$$ Clearly $\Delta(P) \le \Delta_K(G)$ and normality follows since $[K, P] =1$. Alternatively, if 
\begin{center}
        $J_K^G := \Inf_{G/K}^G \times_{G/K} \Def_{G/K}^G \in B(G,G)$
    \end{center}
 and 
    \begin{center}
        $I_P^G := \Ind_{P}^G \times_{P} \Res_{P}^G \in B(G,G)$
    \end{center}
  then the map $I_P^G\to J_K^G$ is a Galois morphism and its image in $\Gamma(G\times G)$ is $[E_{(K, 
   P)}]$. The left and the right invariants of the section $(\Delta_K(G), \Delta(P))$ are both given by $(G, K, P, 1)$.
  
\end{nothing}

\begin{defi}
For each pair $(K,P)\in \G$ we set 
     \begin{equation*}
         e_{(K,P)} := \frac{1}{|G:P|}\big[ E_{(K,P)}\big]\in E_G.
     \end{equation*}
\end{defi}
The following proposition summarizes basic multiplication formulas for the elements $E_{(K, P)}$. The proofs follow easily by direct calculations using the Mackey product formula given in Corollary \ref{Mackey Formula}. We leave the justifications to the reader.

\begin{prop}
 \label{prop4.2}
 Let $G$ and $H$ be finite groups. If $(K,P),(K',P')\in \G$ and $(T,S)\in \Sigma_{G\times H}$ with $l(T,S) = (G,K',P',1)$ then
 
    \begin{enumerate}[(a)]
        \item $E_{(K,P)}\times_{G} (\frac{G\times H}{S\unlhd T})\cong |P\backslash G/P'|
 (\frac{G\times H}{U\unlhd V})$ with $l(V,U) = (G,KK',P\cap P',1)$,
    In particular one has
    \begin{equation*}
         E_{(K,P)}\times_{G} E_{(K',P')} \cong |P\backslash G/P'|E_{(KK',P \cap P')}.
    \end{equation*}
        \item Assume that  $(K,P)\preceq (K',P')$. Then
    \begin{equation*}
        E_{(K,P)}\times_{G}\Big(\frac{G\times H}{S\unlhd T}\Big) \cong |G:P|\Big(\frac{G\times H}{S\unlhd T}\Big)
    \end{equation*}
    In particular
    \begin{equation*}
        E_{(K,P)}\times_G E_{(K',P')}\cong |G:P|E_{(K',P')}\quad and\quad E_{(K,P)}\times_G E_{(K,P)}\cong |G:P|E_{(K,P)}.
    \end{equation*}
        \item Assume that $r(T,S) = (H,L,Q,1)$ for some $(L,Q)\in \mathcal{G}_H$. Then
        \begin{center}
            $(\frac{G\times H}{S\unlhd T})\times_{H} (\frac{G\times H}{S\unlhd T})^{op}\cong |H:Q| E_{(K',P')}$    
        \end{center}
    \end{enumerate}
\end{prop}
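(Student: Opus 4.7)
The plan is to derive all three formulas by direct application of Corollary \ref{Mackey Formula} (the Mackey formula for composition of Galois morphisms), with the hypotheses on the invariants $l(T,S)$ and $r(T,S)$ doing the work of simplifying each $\ast$-product. In every case I unfold the product by Mackey, compute $S\ast{}^{(t,1)}U$ and $T\ast{}^{(t,1)}V$ directly from the definition $A\ast B=\{(g,k):\exists h,(g,h)\in A,(h,k)\in B\}$, and read off the resulting section through its Goursat data via Theorem \ref{thm:Goursat}. The common thread is that the hypothesis $k_1(S)=1$ (and in (c) also $k_2(S)=1$), together with the normality of the projection groups guaranteed by condition (S4), keeps these $\ast$-products tractable.

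For part (a), the Mackey expansion of $E_{(K,P)}\times_G(\tfrac{G\times H}{S\unlhd T})$ is indexed by $t\in P\backslash G/P'$. The hypothesis $k_1(S)=1$ forces the first-coordinate kernel of $\Delta(P)\ast{}^{(t,1)}S$ to remain trivial, while its first-coordinate projection becomes $P\cap{}^tP'$; at the $T$-level, $\Delta_K(G)\ast{}^{(t,1)}T$ projects onto $G$ (using $p_1(T)=G$) with kernel $KK'$. Normality of $P,P'\unlhd G$ then gives $P\cap{}^tP'=P\cap P'$ and $|P\backslash G/P'|=|G|/|PP'|$, and shows all $t$-summands are pairwise $G\times H$-conjugate, hence isomorphic as Galois morphisms with $l$-invariant $(G,KK',P\cap P',1)$. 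The displayed special case $E_{(K,P)}\times_G E_{(K',P')}$ follows by taking $(T,S)=(\Delta_{K'}(G),\Delta(P'))$. Part (b) is then a direct specialization: when $(K,P)\preceq(K',P')$, we have $KK'=K'$, $P\cap P'=P'$, and $P'\le P\unlhd G$, so each Mackey summand coincides with $(T,S)$ and $PgP'=Pg$ for every $g\in G$, giving $|P\backslash G/P'|=|G:P|$.

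For part (c), Mackey expands $f\times_H f^{op}$ into a disjoint union over $t\in Q\backslash H/Q$; since condition (S4) forces $Q\unlhd H$, this indexing set has cardinality $|H:Q|$. The $t=1$ summand is immediate from $k_1(S)=1=k_2(S)$: $S\ast S^{op}=\Delta(P')$ and $T\ast T^{op}=\Delta_{K'}(G)$, which together give $E_{(K',P')}$. For $t\ne 1$, the surjectivity $p_2(T)=H$ supplies $g_t\in G$ with $(g_t,t)\in T$, and the Galois compatibility of $S\unlhd T$ converts conjugation by $t$ on $Q$ into conjugation by $g_t$ on $P'$ via $\eta_S$. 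The decisive step is to produce a single element $(a_t,b_t)\in G\times G$ whose conjugation simultaneously sends $S\ast{}^{(t,1)}S^{op}$ to $\Delta(P')$ and $T\ast{}^{(t,1)}T^{op}$ to $\Delta_{K'}(G)$; the $S$-level choice is determined modulo $C_G(P')\supseteq K'$, and the crossed module axioms (S6)--(S7) supply exactly the freedom needed to make the same element work at the $T$-level. Summing over the $|H:Q|$ double cosets then yields the claim.

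The main obstacle is precisely this simultaneous-conjugation step in (c): conjugation on the $S$-level and on the $T$-level products appears at first sight to require different conjugating elements, and their reconciliation relies on the crossed module compatibility linking $(P'/1,G/K',\partial)$ with $(Q/1,H/L,\partial)$. Once that coherent transport between the two levels is verified, parts (a) and (b) reduce to symbolic bookkeeping with Goursat data.
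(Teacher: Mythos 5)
Your proposal is correct and takes exactly the route the paper intends: the paper gives no proof, stating only that the claims ``follow easily by direct calculations using the Mackey product formula,'' and that is what you carry out. The only place you overcomplicate things is the ``decisive step'' in part (c). You do not need the crossed-module axioms (S6)--(S7) to reconcile the $S$-level and $T$-level conjugations: since $p_2(T)=H$ there is $g_t\in G$ with $(g_t,t)\in T$, and normality of $S$ in $T$ means conjugation by $(g_t,t)$ preserves both $S$ and $T$ simultaneously. Unwinding the definitions then gives directly that $S\ast{}^{(t,1)}S^{\mathrm{op}}={}^{(g_t,1)}\Delta(P')$ and $T\ast{}^{(t,1)}T^{\mathrm{op}}={}^{(g_t,1)}\Delta_{K'}(G)$ for the \emph{same} $g_t$, so each $t$-summand is conjugate in $G\times G$ to $E_{(K',P')}$ with no additional coherence argument required. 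Your parenthetical that the conjugating element is ``determined modulo $C_G(P')$'' is true but a red herring; the compatible choice is handed to you by $T$ itself.
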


\begin{nothing} With this proposition the elements $e_{(K, P)}, (K, P)\in\G$ are idempotents in $E_G$ and we have 
\[
e_{(K, P)}\cdot e_{(L, Q)} = e_{(KL, P\cap Q)} \quad \mbox{\rm and}\quad e_{(1, G)} = 1_{E_G}.
\]
In particular the conditions in Section \ref{app:ipots} are satisfied. We have the commutative subalgebra
generated by $e_{(K, P)}$ as $(K, P)$ runs over $\G$. Also setting 
  \begin{equation}
     \label{fbyes}
         f_{(K,P)} := \sum_{(K,P)\preceq (L,Q)\in \G}\mu_{(K,P),(L,Q)}e_{(L,Q)}
     \end{equation}
     where $\mu_{?, ?}$ is the Mobius function of the poset $\G$, we get
    \begin{equation}
     \label{ebyfs}
          e_{(K,P)} = \sum_{(K,P)\preceq(L,Q)\in \mathcal{G}_G}f_{(L,Q)}   
     \end{equation}
and
\begin{equation}
 \label{allfsequal1}
     \sum_{(K,L)\in \mathcal{G}_G}f_{(K,L)} = e_{(1,G)} = 1\in E_G   
 \end{equation}
  and the following corollary of Proposition \ref{prop:ipot-ortho}.
\end{nothing}
\begin{prop} 
 \label{prop4.5}
For all $(K,P), (L,Q)\in \G$ we have 
 \begin{center}
     $e_{(K,P)}\cdot f_{(L,Q)} = f_{(L,Q)}\cdot e_{(K,P)} = \left\{\begin{array}{l}f_{(L,Q)},\quad \mbox{if} \quad (K,P)\preceq (L,Q),\\0, \qquad \quad \mbox{otherwise},\end{array}\right.$
 \end{center}
 and 
 \begin{center}
     $f_{(K,P)}\cdot f_{(L,Q)} = \left\{\begin{array}{l}f_{(K,P)},\quad \mbox{if} \quad (K,P) = (L,Q),\\0,\qquad \quad \mbox{otherwise}.\end{array}\right.$
 \end{center}
 \end{prop}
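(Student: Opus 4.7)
The plan is to recognize this as a direct corollary of the general Proposition \ref{prop:ipot-ortho} applied to the poset $\mathcal{G}_G$ and the assignment $(K,P)\mapsto e_{(K,P)}$. So the only real work is verifying the hypotheses (a) and (b) from Section \ref{app:ipots} in this concrete setting; once these are in place, the result is immediate.

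First I would record that the minimum element of the poset $\mathcal{G}_G$ is $(1,G)$, and then observe from the definition $e_{(K,P)} = \frac{1}{|G:P|}[E_{(K,P)}]$ that $e_{(1,G)} = [E_{(1,G)}] = [(\Delta(G),\Delta(G))] = 1_{E_G}$, which is hypothesis (a). Next I would verify that joins exist in $\mathcal{G}_G$: for $(K,P),(L,Q)\in\mathcal{G}_G$ the candidate join is $(KL, P\cap Q)$, and one checks $[KL,P\cap Q]=1$ from $[K,P]=1$ and $[L,Q]=1$ (so $KL$ and $P\cap Q$ are normal in $G$ and centralize each other). In fact the join always exists, so the ``otherwise'' branch of condition (b) is vacuous.

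The main technical step is to verify the multiplicative identity
\[
e_{(K,P)}\cdot e_{(L,Q)} = e_{(KL,\,P\cap Q)}.
\]
For this I would apply Proposition \ref{prop4.2}(a), which gives
\[
[E_{(K,P)}]\times_G [E_{(L,Q)}] = |P\backslash G/Q|\,[E_{(KL,\,P\cap Q)}].
\]
Multiplying by the normalizing scalars $\tfrac{1}{|G:P||G:Q|}$ from the definition of the $e$'s, the desired identity reduces to the elementary equality
\[
\frac{|P\backslash G/Q|}{|G:P|\,|G:Q|} = \frac{1}{|G:P\cap Q|}.
\]
Since $P,Q\unlhd G$ we have $P\backslash G/Q = G/PQ$, so this boils down to $|P||Q|=|PQ|\,|P\cap Q|$, which is standard. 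This is the only slightly nontrivial input.

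Having established (a) and (b), I would simply invoke Proposition \ref{prop:ipot-ortho} applied to $X=\mathcal{G}_G$ with the map $(K,P)\mapsto e_{(K,P)}$; the formulas for $e_{(K,P)}\cdot f_{(L,Q)}$ and $f_{(K,P)}\cdot f_{(L,Q)}$ stated in the proposition are precisely the specialization of that general result, noting that the definition (\ref{fbyes}) of $f_{(K,P)}$ matches the Möbius-inversion definition used there. The only potential snag is a sign/conventions check on the order of $\preceq$, but since $(1,G)$ was placed as the minimum in our poset and Proposition \ref{prop:ipot-ortho} also takes $x_0$ as minimum, the conventions agree and no adjustment is needed.
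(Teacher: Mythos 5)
Your proposal is correct and takes essentially the same route as the paper: the paper states just before Proposition \ref{prop4.5} that the $e_{(K,P)}$ satisfy the hypotheses of \ref{prop:ipot-ortho} (namely $e_{(1,G)}=1$ and $e_{(K,P)}\cdot e_{(L,Q)}=e_{(KL,P\cap Q)}$), defines $f_{(K,P)}$ by M\"obius inversion exactly as in \ref{app:ipots}, and presents Proposition \ref{prop4.5} as the resulting corollary. The one place you add value is in writing out the normalization check $\frac{|P\backslash G/Q|}{|G:P||G:Q|} = \frac{1}{|G:P\cap Q|}$ via $P\backslash G/Q \cong G/PQ$ and $|P||Q|=|PQ||P\cap Q|$; the paper leaves this verification to the reader (it is implicit in the move from Proposition \ref{prop4.2}(a) to the idempotent identity), so your explicit computation is a useful confirmation rather than a different approach.
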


\begin{nothing}{\bf Linked pairs.}
Although the idempotents $f_{(K, P)}$ for $(K, P)\in \mathcal G_G$ are orthogonal, they are not necessarily central in $E_G$. Next we define an equivalence relation on $\mathcal G_G$ so that the class sums of $f_{(K, P)}$ becomes central idempotents in the covering algebra that we construct below. Our first definition is more general. 

     Let $G$ and $H$ be finite groups and $(K,P)\in \G$ and $(L,Q)\in \mathcal{G}_H$. We say that the quadruples $(G,K,P,1)$ and $(H,L,Q,1)$ are \textit{linked} if there exists $(T, S)\in \Sigma_{G\times H}$ with $l(T,S) = (G,K,P,1)$ and $r(T,S) = (H,L,Q,1)$. In this case, we write $(G,K,P,1)\underset{(T,S)}{\sim }(H,L,Q,1)$ or just $(G,K,P,1)\sim (H,L,Q,1)$. In the case $H = G$, and $(K,P), (L, Q)\in \mathcal{G}_{G}$ are said to be \textit{$G$-linked} if $(G,K,P,1)\sim (G, L, Q, 1)$. We write $(K,P)\sim_{G} (L, Q)$ or just $(K,P)\sim (L, Q)$. Being linked is clearly an equivalence relation and we write $\{K,P\}_{G}$ for the $G$-linkage class of $(K,P)$.

Note that when $(G,K,P,1)\underset{(T,S)}{\sim }(H,L,Q,1)$ then by Goursat Theorem for sections, the corresponding crossed modules $(P, G/K, i_P)$ and $(Q, H/L, i_Q)$ are isomorphic where $i_P$ is the restriction of the natural homomorphism $G\to G/K$ to $P$. The converse is also true as shown below.
\end{nothing}

\begin{nothing} Let $(K,P)\in \G$. There is an action of $G/K$ on $P$ via conjugation
since $P$ is normal in $G$ and $[K, P] = 1$. It is straightforward to check that $(P,G/K,i_P)$ becomes a crossed module where  $i_P$ is as above. With this notation we get the following for the converse of the linkage condition.
\end{nothing}

\begin{prop}
Let $G$ and $H$ be finite groups and $(K,P)\in \G$ and $(L,Q)\in \mathcal{G}_H$. Then the following are equivalent:
\begin{enumerate}[(a)]
    \item $(G, K, P, 1)\sim (H, L, Q, 1)$.
    \item $(P, G/K, i_P)$ isomorphic to $(Q, H/L, i_Q)$ as crossed modules.
\end{enumerate}
\end{prop}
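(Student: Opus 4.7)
The direction (a) $\Rightarrow$ (b) is already recorded in the paragraph preceding the proposition: if $(G,K,P,1) \underset{(T,S)}{\sim} (H,L,Q,1)$, then the Goursat correspondents of $S$ and $T$ are $(P,1,\eta_S,1,Q)$ and $(G,K,\eta_T,L,H)$ respectively, and condition (S7) of Theorem \ref{thm:Goursat} says exactly that $(\eta_S,\eta_T)$ is an isomorphism from $(Q,H/L,i_Q)$ to $(P,G/K,i_P)$. So the real content is the converse.

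For (b) $\Rightarrow$ (a), my plan is to run the Goursat correspondence of Theorem \ref{thm:Goursat} in reverse. Starting from an isomorphism of crossed modules $(\alpha,\beta) : (Q, H/L, i_Q) \liso (P, G/K, i_P)$, I would propose the two Goursat quintuples
\[
(P_S, K_S, \eta_S, L_S, Q_S) := (P, 1, \alpha, 1, Q), \qquad (P_T, K_T, \eta_T, L_T, Q_T) := (G, K, \beta, L, H).
\]
Both are legitimate Goursat quintuples: we have $1 \unlhd P \le G$ with $\alpha : Q \to P$ a group isomorphism, and $K \unlhd G$ with $\beta : H/L \to G/K$ a group isomorphism.

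The bulk of the work is then the routine verification of the compatibility conditions (S3)--(S7). Specifically: (S3) $K_S = 1 \unlhd K = K_T$ and $L_S = 1 \unlhd L = L_T$ are trivial; (S4) $K_S=1$ and $P_S=P$ are normal in $P_T=G$ because $(K,P)\in \G$, and symmetrically on the right; (S5) reduces to $[K,P] \le 1$ and $[L,Q]\le 1$, which is exactly the requirement that $(K,P)\in\G$ and $(L,Q)\in\mathcal G_H$; (S6) asks that $(P,G/K,\partial)$ and $(Q,H/L,\partial)$ be crossed modules, and under these identifications $\partial$ is precisely $i_P$ and $i_Q$, so this is the hypothesis; (S7) is exactly the assumption that $(\alpha,\beta)$ is an isomorphism of crossed modules.

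Having verified (S3)--(S7), Theorem \ref{thm:Goursat} produces a unique section $(T,S)$ of $G\times H$ whose Goursat correspondents are the two quintuples above. Reading off the left and right invariants from the construction gives $l(T,S) = (P_T, K_T, P_S, K_S) = (G, K, P, 1)$ and $r(T,S) = (Q_T, L_T, Q_S, L_S) = (H, L, Q, 1)$, so $(G,K,P,1) \underset{(T,S)}{\sim} (H,L,Q,1)$, as required. There is no real obstacle here; the only point worth stating carefully is that the actions involved in the crossed modules $(P,G/K,i_P)$ and $(Q,H/L,i_Q)$ are conjugation actions, which match the actions appearing in (S6), so that the intrinsic crossed-module isomorphism $(\alpha,\beta)$ really does qualify as the data $(\eta_S,\eta_T)$ demanded by Theorem \ref{thm:Goursat}.
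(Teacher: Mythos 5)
Your proposal is correct and follows exactly the approach of the paper: both directions go through Theorem \ref{thm:Goursat}, with (a) $\Rightarrow$ (b) read off from condition (S7) and the converse obtained by building the two Goursat quintuples $(P,1,\alpha,1,Q)$ and $(G,K,\beta,L,H)$ and checking (S3)--(S7). The paper's own proof is terser (it simply asserts that the conditions are satisfied), whereas you spell out the verification, but the argument is the same.
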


\begin{proof}
  The first part, $(a)$ implies $(b)$, is Theorem \ref{thm:Goursat}(S7). For the converse, if $(\alpha , \beta ) : (Q,H/L,i_Q)\rightarrow (P,G/K,i_P)$ is an isomorphism of crossed modules, then the conditions (S3) - (S7) are satisfied and hence there exists a section $(T, S)$ with left and right invariants given by $(G, K, P, 1)$ and $(H, L, Q, 1)$, respectively. In particular $(G, K, P, 1)$ and $(H, L, Q, 1)$ are linked. 
\end{proof}

\begin{nothing}
The partial order on the set $\G$ induces a partial order on the set $\G/ \sim$ of linkage classes: Given $(K,P), (L,Q)\in \G$, we write $\{K,P\}_G\preceq\{L,Q\}_G$ if and only if there exists $(K',P')\in \{K,P\}_G$ and 
$(L',Q')\in \{L,Q\}_G$ with $(K',P')\preceq (L',Q')$. To see that the induced relation is transitive, we show that 
$\{ K, P\}_G \preceq\{L, Q\}_G$ if and only if for each $(K', P')\sim_G (K, P)$ there exists  $(L', Q')\sim (L, Q)$ such that $(K', P')\preceq (L', Q')$. Clearly the converse follows from the definition. To prove the forward implication, suppose $(K, P)\preceq (L, Q)$ and $(K', P')\underset{(T, S)}{\sim}(L, Q)$ and consider
\[
E_{(L, Q)}\cdot\Big(\frac{G\times G}{S\trianglelefteq T}\Big)=\Big(\frac{G\times G}{S'\trianglelefteq T'}\Big)
\]
By Proposition \ref{prop4.2}(a), we have $l_0(T', S') = (L, Q)$. Putting $r_0(T', S') = (L', Q')$, we get $(L', Q')\sim (L, Q)$ and $(K', P')\preceq (L', Q')$.
With this definition, we write
\[    e_{\{K,P\}_G} := \sum_{(K',P')\in \{K,P\}_G}e_{(K',P')}\in \Gamma (G,G)
\]
and
\begin{equation*}
    f_{\{K,P\}_G} := \sum_{(K',P')\in \{K,P\}_G}f_{(K',P')}\in \Gamma (G,G)
\end{equation*}
for $(K, P)\in \G$.
The conditions of Proposition \ref{prop:ipot-ortho} are also satisfied by the elements $e_{\{K, P\}_G}$ and 
$f_{\{K, P\}_G}$ as $(K, P)$ runs over linkage classes in $\mathcal G_G$. Hence we obtain the following corollary.
\end{nothing}

\begin{prop} \label{prop6.4}
Let $(K,P), (L,Q)\in \mathcal{G}_G$. Then
\begin{equation*}
    e_{\{K,P\}_{G}} f_{\{L,Q\}_{G}} = f_{\{L,Q\}_{G}} e_{\{K,P\}_{G}} = 0 \quad \mbox{unless} \quad \{K,P\}_G\preceq\{L,Q\}_G
\end{equation*}

\begin{equation*}
    e_{\{K,P\}_{G}} f_{\{K,P\}_{G}} = f_{\{K,P\}_{G}} e_{\{K,P\}_{G}} = f_{\{K,P\}_{G}}\quad \mbox{and} 
\end{equation*}

\begin{equation*}
    f_{\{K,P\}_{G}} f_{\{L,Q\}_{G}} = \left\{\begin{array}{l}f_{\{K,P\}_{G}},\quad \mbox{if} \quad \{K,P\}_{G} = \{L,Q\}_{G},\\0, \qquad \quad \mbox{otherwise}.\end{array}\right.
\end{equation*}
\end{prop}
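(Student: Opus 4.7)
The plan is to reduce each assertion to Proposition \ref{prop4.5} by expanding the class sums defining $e_{\{K,P\}_G}$ and $f_{\{K,P\}_G}$, after establishing the following rigidity statement inside a single $G$-linkage class: if $(K_1,P_1),(K_2,P_2)\in\{K,P\}_G$ satisfy $(K_1,P_1)\preceq (K_2,P_2)$, then $(K_1,P_1)=(K_2,P_2)$. Since $(K_1,P_1)\sim_G(K_2,P_2)$, the characterization of linkage through crossed modules (Theorem \ref{thm:Goursat}(S7), together with the crossed module reformulation stated just before Proposition \ref{prop6.4}) identifies $(P_i, G/K_i, i_{P_i})$ up to isomorphism, so $|P_1|=|P_2|$ and $|K_1|=|K_2|$; combined with the inclusions $K_1\le K_2$ and $P_2\le P_1$ coming from $(K_1,P_1)\preceq(K_2,P_2)$, this forces equality.

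For the product $f_{\{K,P\}_G}f_{\{L,Q\}_G}$ I would expand both factors and apply the orthogonality part of Proposition \ref{prop4.5}. The only surviving terms are those with $(K',P')=(L',Q')$, so when the two linkage classes are distinct (hence disjoint) no term survives, while when they coincide each member of the class contributes $f_{(K',P')}$ exactly once, reconstructing $f_{\{K,P\}_G}$.

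For the mixed product $e_{\{K,P\}_G}\cdot f_{\{L,Q\}_G}$ I again expand and use Proposition \ref{prop4.5}: a term $e_{(K',P')}\cdot f_{(L',Q')}$ equals $f_{(L',Q')}$ when $(K',P')\preceq (L',Q')$ and vanishes otherwise. If $\{K,P\}_G\not\preceq\{L,Q\}_G$ then no such pair of representatives exists, by the very definition of the induced order, so the sum is $0$. If $\{K,P\}_G=\{L,Q\}_G$, the rigidity lemma shows that for each $(L',Q')$ in the class the only $(K',P')$ in the same class with $(K',P')\preceq (L',Q')$ is $(L',Q')$ itself, so the sum collapses to $\sum_{(L',Q')\in\{K,P\}_G}f_{(L',Q')}=f_{\{K,P\}_G}$. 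The reverse product $f_{\{L,Q\}_G}\cdot e_{\{K,P\}_G}$ is handled by the symmetric statement of Proposition \ref{prop4.5}. The only step with real content is the rigidity lemma, which rides on the crossed-module reformulation of $G$-linkage; the remainder is routine bookkeeping on top of Proposition \ref{prop4.5}, so I do not anticipate further obstacles.
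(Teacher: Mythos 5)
Your proof is correct. The paper disposes of this proposition in one line by asserting that the class sums $e_{\{K,P\}_G}$ (indexed by the poset $\G/\!\sim$) again satisfy the hypotheses of Proposition~\ref{prop:ipot-ortho} and then invoking that abstract result; what it glosses over is the verification that $e_{\{K,P\}_G}\cdot e_{\{L,Q\}_G}$ equals the class sum attached to a join (or vanishes), which is not a completely trivial check. You instead bypass Proposition~\ref{prop:ipot-ortho} for the class sums entirely and compute the products term by term from Proposition~\ref{prop4.5}, after first isolating the one genuinely structural fact that makes the bookkeeping collapse: within a single $G$-linkage class there are no nontrivial $\preceq$-relations. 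Your derivation of this rigidity from the crossed-module characterization of linkage (isomorphic crossed modules force $|P_1|=|P_2|$ and $|K_1|=|K_2|$, which together with $K_1\le K_2$, $P_2\le P_1$ forces equality) is exactly right, and the remaining steps — disjointness of distinct linkage classes for $f\cdot f$, and the definition of the induced order for the vanishing of $e\cdot f$ — close the argument cleanly. The upshot: your route is more elementary and actually more rigorous than the paper's, at the cost of one short lemma that the paper leaves implicit; it also stays within exactly what the proposition asserts (it does not give $e_{\{K,P\}_G}f_{\{L,Q\}_G}=f_{\{L,Q\}_G}$ for a strict inequality of classes, but the statement does not claim that either).
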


\begin{nothing}{\bf Covering algebra.} As we have shown in Proposition \ref{decomp galois}, for any section $(T, S)$ in $G\times G$, there is a factorization of $(\frac{G\times G}{S\unlhd T})$ over $p_1(T)/k_1(S)$ and $p_2(T)/k_2(S)$. Hence while determining the essential algebra $\bar E_G$, we can work with the smaller subalgebra where this type of factorization is trivial. 
Following \cite{fibered biset}, we call the pair $(T,S)\in \Sigma_{G\times H}$ {\bf covering} if $p_1(T) = G, p_2(T) = H$ and $k_1(S) = k_2(S) =1$, and denote the set of all such pairs by $\Sigma_{G\times H}^c$.  Notice that for a covering pair $(T, S)$, we have $l_0(T,S) \in \G$ and $r_0(T,S)\in \mathcal{G}_H$. It is clear that the 
elements $[\frac{G\times G}{S\unlhd T}]$ as $(T, S)$ runs over all covering pairs generates a subalgebra of 
$E_G$. We denote it by $E_G^c$. This is a covering algebra since by Proposition \ref{decomp galois} any basis element outside of $E_G^c$ factors through a group of smaller order.

\begin{nothing} Next we aim to determine the covering algebra $E_G^c$ more explicitly so that its intersection with the ideal $I_G$ is easier to unearth. For this aim, we first decompose $E_G^c$ as a product of matrix algebras. Note that for any $(K, P)\in \G$, the idempotents $e_{(K, P)}$ and $f_{(K, P)}$ are contained in $E_G^c$. The following theorem introduces certain groups contained in $E_G^c$ together with special bisets. Later, as Theorem \ref{k-algiso}, we show that the covering algebra is a direct sum of group algebras over these groups. Some parts of this theorem are given as separate results in \cite{fibered biset}.
\end{nothing}

\begin{theorem}\label{thm:Gammas}
Let $G, H$ be finite groups and let $(K,P)\in \mathcal{G}_G$ and $(L, Q)\in \mathcal G_H$ be linked pairs. 
Also let
\[ \Gamma_{(G,K,P)} = \left\{ \frac{1}{|G:P|}\left[ \frac{G\times G}{S\unlhd T} \right] \mid l(T,S) = (G,K,P,1) = r(T,S)\right\}
\]
and 
\[ {}_{(G, K, P)}\Gamma_{(H, L, Q)} = \left\{ \left[ \frac{G\times H}{U\unlhd V} \right] \mid l(V,U) = (G,K,P,1),  r(V, U) =(H, L, Q, 1)\right\}.
\]
 Then
\begin{enumerate}[(a)]
\item The set $\Gamma_{(G,K,P)}$ is a finite group under the multiplication induced by the product in $E_G$. The identity is $e_{(K, P)}$ and inverses are given by taking opposites.
\item The set ${_{(G, K, P)}}\Gamma_{(H, L, Q)} $ is a $(\Gamma_{(G,K,P)},\Gamma_{(H,L,Q)})$-biset which is both left and right transitive and left and right free.
\item Any $\gamma\in {_{(G, K, P)}}\Gamma_{(H, L, Q)}$ induces a group isomorphism
\begin{equation*}
    \gamma : \Gamma_{(H,L,Q)} \xrightarrow{\sim} \Gamma_{(G,K,P)}.
\end{equation*}
\item The functor 
\[ k[_{(G,K,P)}\Gamma_{(H,L,Q)}]\otimes_{k\Gamma_{(H,L,Q)}} -: {}_{k\Gamma_{(H,L,Q)}}\rm{mod} \to {}_{k\Gamma_{(G,K,P)}}\rm{mod}\]
is an equivalence of categories. It induces a canonical bijection \begin{equation}
\label{canbijofirr}
    \Irr (k\Gamma_{(H,L,Q)}) \xrightarrow{\sim} \Irr (k\Gamma_{(G,K,P)}).
\end{equation}
\item There is an isomorphism of groups
\begin{equation*}
    \Gamma_{(G,K,P)} \cong \Out(P, G/K, i_P)
\end{equation*}
where the right hand side is the group of outer automorphisms of the crossed module $(P, G/K, i_P)$.
\end{enumerate}
\end{theorem}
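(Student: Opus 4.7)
The plan is to exploit the Mackey formula (Corollary \ref{Mackey Formula}) together with Proposition \ref{prop4.2} to establish the group structure on $\Gamma_{(G,K,P)}$, and then to use Goursat's theorem for sections (Theorem \ref{thm:Goursat}) to identify this group with $\Out(P, G/K, i_P)$.

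For part (a), I would first verify closure directly. Given two elements $\alpha = \tfrac{1}{|G:P|}[\tfrac{G\times G}{S\unlhd T}]$ and $\beta = \tfrac{1}{|G:P|}[\tfrac{G\times G}{S'\unlhd T'}]$ with invariants $(G,K,P,1)$ on both sides, the Mackey product $\alpha\cdot\beta$ is indexed by $P\backslash G/P$, which has size $|G:P|$ since $P\unlhd G$. One checks that every summand $[\tfrac{G\times G}{S\ast {}^{(g,1)}S'\unlhd T\ast {}^{(g,1)}T'}]$ again has invariants $(G,K,P,1)$ on both sides (using $P,K\unlhd G$ and $[K,P]=1$), and that all $|G:P|$ summands lie in a single $G\times G$-orbit, so after applying the $\tfrac{1}{|G:P|^2}$ normalization the product collapses to a single element of $\Gamma_{(G,K,P)}$. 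The identity property of $e_{(K,P)}$ is immediate from Proposition \ref{prop4.2}(b) with $(K,P)\preceq(K,P)$, and the inverse relation $\alpha\cdot\alpha^{\mathrm{op}}=e_{(K,P)}$ follows from Proposition \ref{prop4.2}(c). Finiteness is clear since $\Sigma_{G\times G}$ modulo $G\times G$-conjugacy is a finite set.

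For part (e), Theorem \ref{thm:Goursat} applied to a section $(T,S)$ with $l(T,S)=r(T,S)=(G,K,P,1)$ shows that $T$ corresponds to a Goursat quintuple $(G,K,\eta_T,K,G)$ and $S$ to $(P,1,\eta_S,1,P)$, with $\eta_T\in\Aut(G/K)$ and $\eta_S\in\Aut(P)$; conditions (S6)--(S7) assert precisely that $(\eta_S,\eta_T)$ is a crossed-module automorphism of $(P,G/K,i_P)$. This sets up a bijection between such sections and elements of $\Aut(P,G/K,i_P)$. Tracking how $(g_1,g_2)$-conjugation transforms $(\eta_S,\eta_T)$, one sees that left and right conjugation correspond to pre- and post-composition with inner crossed-module automorphisms, so passing to $G\times G$-orbits yields $\Inn\backslash\Aut/\Inn=\Out(P,G/K,i_P)$ (the two-sided quotient coincides with $\Aut/\Inn$ since $\Inn$ is normal). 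A direct calculation via the $\ast$-product then shows that multiplication in $\Gamma_{(G,K,P)}$ corresponds to composition of crossed-module automorphisms, completing the isomorphism.

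For parts (b), (c), and (d), the linkage hypothesis provides some $\gamma_0\in{}_{(G,K,P)}\Gamma_{(H,L,Q)}$. Using Proposition \ref{prop4.2}(c), left multiplication by $\gamma_0$ and right multiplication by $\gamma_0^{\mathrm{op}}$ (with appropriate scalars) are mutually inverse bijections between $\Gamma_{(H,L,Q)}$ and ${}_{(G,K,P)}\Gamma_{(H,L,Q)}$, establishing left transitivity and freeness of the $\Gamma_{(G,K,P)}$-action; the right action is symmetric. For (c), a fixed $\gamma$ yields the map $x\mapsto \tfrac{1}{|H:Q|}\gamma\cdot x\cdot\gamma^{\mathrm{op}}$, which is a group isomorphism by Proposition \ref{prop4.2}(c) and the closure argument of part (a). Part (d) is then formal: the bimodule $k[{}_{(G,K,P)}\Gamma_{(H,L,Q)}]$ is free of rank one on each side, so tensoring with it is a Morita-type equivalence of categories, inducing the stated bijection on irreducibles.

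The main obstacle I anticipate is the bookkeeping in part (a): verifying that the $|G:P|$ summands in $\alpha\cdot\beta$ are all $G\times G$-conjugate and hence the sum collapses to a single section. This requires analyzing how $g$-conjugation acts on $S\ast{}^{(g,1)}S'$ and $T\ast{}^{(g,1)}T'$ as $g$ ranges over $P\backslash G/P$, and crucially relies on $P,K\unlhd G$ and $[K,P]=1$ from the definition of $\mathcal G_G$. Once this collapse is established, the remainder of the proof is a systematic translation between sections, Goursat data, and crossed-module automorphisms.
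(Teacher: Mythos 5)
Your overall strategy --- collapse the Mackey products using the normality of $P$ to get the group structure on $\Gamma_{(G,K,P)}$, then translate sections into crossed-module automorphisms via Theorem \ref{thm:Goursat} --- is the approach the paper takes. The paper's own proof is terse: it details only part (e), constructing a homomorphism $\Theta : \Aut(P, G/K, i_P)\to\Gamma_{(G,K,P)}$ and asserting surjectivity and kernel equal to $\Inn$, while declaring (a)--(d) ``easy justifications.'' You supply the substantive detail the paper omits, in particular the verification that the $|G:P|$ Mackey summands $\big(T\ast{}^{(t,1)}T',\, S\ast{}^{(t,1)}S'\big)$ are all $(G\times 1)$-conjugate: they differ by conjugation by a representative of $\beta(tK)$, i.e.\ by an inner crossed-module automorphism, so they give the same standard basis element.

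One computational slip in part (c): the normalization should be $\tfrac{1}{|G:P|\,|H:Q|}\,\gamma\cdot x\cdot\gamma^{\mathrm{op}}$, not $\tfrac{1}{|H:Q|}\,\gamma\cdot x\cdot\gamma^{\mathrm{op}}$. For $x = \tfrac{1}{|H:Q|}[\,\cdot\,]\in\Gamma_{(H,L,Q)}$, the Mackey product $\gamma\cdot x$ has $|H:Q|$ collapsing summands and equals a single unnormalized standard basis element; then $(\gamma\cdot x)\cdot\gamma^{\mathrm{op}}$ again has $|H:Q|$ collapsing summands and equals $|H:Q|$ times an unnormalized basis element. Thus $\tfrac{1}{|H:Q|}\,\gamma x\gamma^{\mathrm{op}}$ is $|G:P|$ times an element of $\Gamma_{(G,K,P)}$, not an element of it. Equivalently, Proposition \ref{prop4.2}(c) gives $\gamma\gamma^{\mathrm{op}} = |H:Q|E_{(K,P)} = |H:Q|\,|G:P|\,e_{(K,P)}$, so both index factors must appear in the denominator. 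The rest of your argument goes through once this normalization is corrected.
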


\begin{proof} 
All except the last claim are easy justifications, we leave the details to the reader. We only prove the last part by constructing an isomorphism. Define
\[
\Theta: \Aut(P, G/K, i_P)\to \Gamma_{(G, K, P)}
\] 
associating $(\alpha, \beta)$ to the element in $\Gamma_{(G, K, P)}$ corresponding to the section $(T, S)$ where
\[
T = \{ (x,y)\mid xK = \beta (yK)\, \mbox{for} \, x,y\in G \}
\]
and
\[
S = \{ (\alpha(p),p)\mid p\in P \}
\]
Note that $(T, S)$ is indeed a section by Goursat's theorem for sections. It is easy to check that $\Theta$ is a group homomorphism. Moreover if $(T, S)$ is a section of $G\times G$ with $l_0(T, S) = (K, P) = r_0(T, S)$ then by Goursat's Theorem for sections, we obtain an automorphism of the crossed module $(P, G/K, \iota_P)$. It remains to show that the kernel of $\Theta$ is the group of inner automorphisms. 
 \end{proof}

As a preparation for the proof of Theorem \ref{k-algiso}, we need the following technical results. Statements and proofs are very similar to the ones proved in \cite[Section 6]{fibered biset}. We state our versions in this section without proofs and collect all the proofs in the appendix. The following proposition is the version of Lemma 6.3 and its corollary from \cite{fibered biset}. 

\begin{prop}\label{prop:central} With the above notation
\begin{enumerate}[(a)]
\item Let $(T, S)$ be covering and $(K, P), (L, Q)\in \G$. If $f_{\{K, P\}_G}[\frac{G\times G}{S\unlhd T}]f_{\{L, Q\}_G}$ is non zero, then $\{ K, P\}_G = \{L, Q\}_G$.
\item The set $\{ f_{\{K, P\}_G}\mid \{K, P\}_G\in \G / \sim\}$ is a set of mutually orthogonal central idempotents of $E_G^c$ and their sum is 1.
\item The covering algebra $E_G^c$ decomposes into its two-sided ideals as
    \begin{equation}
    \label{decomp}
        E_G^c = \bigoplus_{\{K, P\}_G\in \G/\sim} f_{\{K, P\}_G}E_G^c.
    \end{equation}

\end{enumerate}
\end{prop}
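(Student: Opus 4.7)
The strategy is to establish (a) as the substantive content and to deduce (b) and (c) formally from it. Fix a basis element $\alpha = [\frac{G\times G}{S\unlhd T}]$ of $E_G^c$ and set $(K^*,P^*) := l_0(T,S)$, $(L^*,Q^*) := r_0(T,S)$; both lie in $\G$ by the covering condition, and the section $(T,S)$ itself witnesses the linkage $(K^*,P^*)\sim(L^*,Q^*)$. Proposition \ref{prop4.2}(a) then says that $e_{(K,P)}\cdot\alpha$ is a scalar multiple of a single basis element whose left invariant is the join $(K,P)\vee(K^*,P^*)$, and Proposition \ref{prop4.2}(b) specializes to $e_{(K,P)}\cdot\alpha = \alpha$ when $(K,P)\preceq(K^*,P^*)$. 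Symmetric statements hold on the right via opposites.

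The technical heart is the following individual-idempotent refinement of (a): $f_{(K',P')}\cdot\alpha\cdot f_{(L',Q')}$ vanishes unless $(K',P')=(K^*,P^*)$ and $(L',Q')=(L^*,Q^*)$. To prove it, I would expand both $f$'s via (\ref{fbyes}), group the resulting double sum by the basis element of $E_G^c$ into which each summand lands (using the left-invariant description above together with its right analogue), and, by $k$-linear independence of distinct basis elements, reduce the vanishing to a Mobius-type cancellation inside fixed intervals of the poset $\G$. Part (a) then follows by summing over the respective linkage classes: the only potentially non-zero summand of $f_{\{K,P\}_G}\cdot\alpha\cdot f_{\{L,Q\}_G}$ forces $(K',P')=(K^*,P^*)$ and $(L',Q')=(L^*,Q^*)$, whereupon $(K^*,P^*)\sim(L^*,Q^*)$ yields $\{K,P\}_G = \{L,Q\}_G$.

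For (b), orthogonality and the identity $\sum_{\{K,P\}_G}f_{\{K,P\}_G} = 1$ are already contained in Proposition \ref{prop6.4} and equation (\ref{allfsequal1}). Centrality is then an immediate corollary of (a): for any basis element $\alpha\in E_G^c$ and any class $\{K,P\}_G$, the expansions
\[ f_{\{K,P\}_G}\cdot\alpha = \sum_{\{L,Q\}_G}f_{\{K,P\}_G}\cdot\alpha\cdot f_{\{L,Q\}_G} \quad\text{and}\quad \alpha\cdot f_{\{K,P\}_G} = \sum_{\{L,Q\}_G}f_{\{L,Q\}_G}\cdot\alpha\cdot f_{\{K,P\}_G} \]
both collapse by (a) to the same single term $f_{\{K,P\}_G}\cdot\alpha\cdot f_{\{K,P\}_G}$, proving $f_{\{K,P\}_G}\cdot\alpha = \alpha\cdot f_{\{K,P\}_G}$. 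Part (c) is then the standard Peirce decomposition of $E_G^c$ along the complete set of orthogonal central idempotents $\{f_{\{K,P\}_G}\}_{\{K,P\}_G\in\G/\sim}$.

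The main obstacle is the individual-idempotent Mobius step in (a). The scalars $|P\setminus G/P^*|/|G:P|$ appearing in Proposition \ref{prop4.2}(a) depend non-trivially on the indexing pair $(K,P)$, and distinct pairs $(M,N)\succeq(K',P')$ may route their summands into the same or into different target basis elements of $E_G^c$; the cancellation therefore has to be verified component-by-component within each target, in the same spirit as the argument in \cite[Lemma 6.3 and Proposition 6.4]{fibered biset}. Once that bookkeeping is set up, the remainder of the proposition is purely formal.
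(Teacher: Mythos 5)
Your deduction of parts (b) and (c) from part (a) is correct and coincides with the paper's: both collapse $\sum_{\{L,Q\}_G}f_{\{K,P\}_G}\,\alpha\, f_{\{L,Q\}_G}$ to the diagonal term, conclude centrality, and then invoke the Peirce decomposition. The problem lies in the proposed proof of (a).

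The ``individual-idempotent refinement'' you want to prove --- that $f_{(K',P')}\,\alpha\, f_{(L',Q')}$ vanishes unless $(K',P')=(K^*,P^*)$ and $(L',Q')=(L^*,Q^*)$ --- is false, so no amount of M\"obius bookkeeping will establish it. Concretely, take $\alpha=e_{(K^*,P^*)}=\frac{1}{|G:P^*|}\big[\frac{G\times G}{\Delta(P^*)\unlhd\Delta_{K^*}(G)}\big]$ for any non-maximal $(K^*,P^*)\in\G$; this is a covering basis element with $l_0=r_0=(K^*,P^*)$. For any $(K',P')\in\G$ with $(K^*,P^*)\prec(K',P')$, Proposition \ref{prop4.5} gives $e_{(K^*,P^*)}f_{(K',P')}=f_{(K',P')}$, and therefore $f_{(K',P')}\,\alpha\, f_{(K',P')}=f_{(K',P')}\ne 0$ even though $(K',P')\ne(K^*,P^*)$. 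More generally, the one-sided product $f_{(K',P')}\,\alpha$ is never zero when $l_0(\alpha)\preceq(K',P')$: expanding $f_{(K',P')}=\sum_{(M,N)\succeq(K',P')}\mu_{(K',P'),(M,N)}e_{(M,N)}$ and using Proposition \ref{prop4.2}(a), each $e_{(M,N)}\cdot\alpha$ is a nonzero scalar times a covering basis element with \emph{distinct} left invariant $(M,N)\vee l_0(\alpha)=(M,N)$, so the terms are linearly independent and cannot cancel. Thus the idempotent $f_{(K',P')}$ does not ``pick out'' sections with left invariant exactly $(K',P')$; this is precisely the subtlety that Lemma \ref{lem:w's}(b) addresses later by showing the two decompositions of $E_G^c$ are \emph{different} submodules related only by a projection isomorphism.

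What does hold, and what the paper actually uses, is an argument at the level of linkage classes that exploits the covering hypothesis in an essential way. If $f_{(K',P')}\,\alpha\,e_{(L'',Q'')}\ne 0$ with $(K',P')\in\{K,P\}_G$ and $(L'',Q'')\succeq(L',Q')\in\{L,Q\}_G$, then $\alpha\,e_{(L'',Q'')}$ is a nonzero multiple of a single covering basis element $\beta$, and the \emph{covering} condition forces $l_0(\beta)\sim_G r_0(\beta)$. Chaining the resulting inequalities in $\G/\!\sim$ gives $\{L,Q\}_G\preceq\{r_0(\beta)\}_G=\{l_0(\beta)\}_G\preceq\{K,P\}_G$, and the symmetric (opposite) argument gives the reverse inequality. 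This two-sided squeeze is the substantive content of (a); it is not a M\"obius cancellation, and it cannot be replaced by the pointwise claim. If you want an individual-pair version that actually suffices for (a), the right statement is the weaker one ``$f_{(K',P')}\,\alpha\, f_{(L',Q')}\ne 0$ implies $(K',P')\sim_G(L',Q')$'', but proving that requires exactly the same linkage-class squeeze.
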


\begin{nothing}
We can also decompose $E_G^c$ into $k$-submodules via linkage classes. For any 
$\{K, P\}_G\in\G/\sim$, let $E_G^{c, \{K, P\}}$ be the submodule of $E_G^c$ generated by all 
$\big[\frac{G\times G}{S\unlhd T}\big]$ with $(T, S)\in \Sigma _{G\times G}^c$ and $l_0(T, S)\sim_G (K, P)$ (or equivalently $r_0(T, S)\sim_G (K, P)$). Then
\begin{equation}
\label{moduldecomp}
    E_G^c = \bigoplus_{\{K, P\}_G\in \G/\sim} E_G^{c, \{K, P\}}
\end{equation}

This decomposition is related to the ideal decomposition in the following way. The following version of \cite[Lemma 6.6]{fibered biset} hides some parts of the result that are needed for the proof. The proof is presented in Appendix \ref{app:proofs}.
\end{nothing}
\end{nothing}

\begin{lem}\label{lem:w's}
Let $(K, P)\in\mathcal G_G$. 
\begin{enumerate}[(a)]
\item The following equality holds.
\[
\bigoplus_{\{K,P\}_G \preceq \{L,Q\}_G\in \G/\sim}E_G^c f_{\{L,Q\}_G} = \bigoplus_{\{K,P\}_G \preceq \{L,Q\}_G\in \G/\sim}E_G^{c,\{L,Q\}}
\]
\item The projection 
\[
\omega: E_G^{c, \{K, P\}} \to E_G^c f_{\{K, P\}_G},\; b\mapsto bf_{\{K, P\}_G}
\]
with respect to the ideal decomposition of $E_G^c$ is an isomorphism of $k$-modules. Its inverse is given by the projection with respect to the submodule decomposition of $E_G^c$.
\item If $\{K, P\}_G = \{(K_1, P_1), (K_2, P_2), \ldots, (K_n, P_n)\}$ then $\omega$ becomes the direct sum, over $i$ and $j$, of the $k$-module isomorphisms 
\[
\omega_{ij}: k[{}_{(G, K_i, P_i)} \Gamma_{(G, K_j, P_j)}] \to f_{(K_i, P_i)}E_G^cf_{(K_j, P_j)}
\]
given by $b_{ij} \mapsto f_{(K_i, P_i)}bf_{(K_j, P_j)}$.
\item The isomorphism $\omega_{ii}$ as defined above induces an isomorphism of $k$-algebras
\[
k[\Gamma_{(G, K, P)}] \to f_{(K, P)}E_G^cf_{(K, P)}
\]
given by $a\mapsto f_{(K, P)}af_{(K, P)}$.
\end{enumerate}
\end{lem}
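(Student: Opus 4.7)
The plan is to reduce everything to part (a), which establishes a compatibility between the submodule and ideal filtrations of $E_G^c$ indexed by the poset $\G/{\sim}$; parts (b), (c), (d) will then follow by taking associated graded pieces, refining to individual members of the linkage class, and a final algebra computation.

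For part (a) I would prove the stronger statement that for every upward-closed subset $\mathcal{U} \subseteq \G/{\sim}$,
\[
\bigoplus_{\{L,Q\}_G \in \mathcal{U}} E_G^{c,\{L,Q\}} \;=\; \bigoplus_{\{L,Q\}_G \in \mathcal{U}} E_G^c f_{\{L,Q\}_G}.
\]
The essential calculation is that for any covering basis element $x = \bigl[\frac{G\times G}{S\unlhd T}\bigr]$ with $l_0(T,S) \in \{M,N\}_G$, the product $x \cdot f_{\{L,Q\}_G}$ lies in $\bigoplus_{\{M',N'\}_G \succeq \{M,N\}_G} E_G^{c,\{M',N'\}}$: one expands $f_{\{L,Q\}_G}$ via its Möbius expression in the $e_{(L',Q')}$'s and applies Proposition \ref{prop4.2}(a), which shows that each resulting basis element has left invariant upward-shifted to $(L''K_T, Q''\cap P_T) \succeq (K_T,P_T)$. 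Combined with centrality of $f_{\{L,Q\}_G}$ and $\sum f_{\{L,Q\}_G} = 1$ (Proposition \ref{prop:central}), this triangular structure forces the two filtrations to coincide. Taking the quotient of the matched sums for $\mathcal{U} = \{\{L,Q\}_G \succeq \{K,P\}_G\}$ and $\mathcal{U} \setminus \{\{K,P\}_G\}$ then yields (b): the projection $\omega\colon E_G^{c,\{K,P\}} \xrightarrow{\sim} E_G^c f_{\{K,P\}_G}$, realized by right multiplication by $f_{\{K,P\}_G}$, is a $k$-module isomorphism, with inverse coming from the submodule decomposition (\ref{moduldecomp}).

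For (c), both sides decompose further: each covering basis element in $E_G^{c,\{K,P\}}$ has unique exact invariants $(K_i,P_i)$ and $(K_j,P_j)$, giving the source decomposition; on the target, the $f_{(K_i,P_i)}$ are mutually orthogonal with sum $f_{\{K,P\}_G}$ (Proposition \ref{prop4.5}), so $f_{\{K,P\}_G} E_G^c = \bigoplus_{i,j} f_{(K_i,P_i)} E_G^c f_{(K_j,P_j)}$, and $\omega$ respects these decompositions, sending each basis element to its sandwich. For (d), I would first deduce from (c) applied with $i=j$ that for $a \in \Gamma_{(G,K,P)}$ one has $\omega_{ii}(a) = a f_{\{K,P\}_G} \in f_{(K,P)} E_G^c f_{(K,P)}$, and then (using centrality of $f_{\{K,P\}_G}$ and orthogonality of the individual $f_{(K_i,P_i)}$'s) that $f_{(K,P)} a = a f_{(K,P)} = f_{(K,P)} a f_{(K,P)}$. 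This yields
\[
\omega_{ii}(a)\,\omega_{ii}(b) = f_{(K,P)} a f_{(K,P)} b f_{(K,P)} = f_{(K,P)} a b f_{(K,P)} = \omega_{ii}(ab),
\]
and $\omega_{ii}(e_{(K,P)}) = f_{(K,P)}$, so $\omega_{ii}$ is a unital $k$-algebra isomorphism.

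The main obstacle is the triangular bookkeeping in part (a): one must carefully verify, via Proposition \ref{prop4.2}(a) and Möbius inversion, that multiplication by $e_{(L'',Q'')}$ can only move left invariants upward in the poset $\G$, since the entire matching of the two filtrations rests on this monotonicity. A more subtle point arises in part (d), where one needs $f_{(K,P)} a = a f_{(K,P)}$ despite the fact that $f_{(K,P)}$ is not in general central in $E_G^c$ — only the linkage-class sum $f_{\{K,P\}_G}$ is, and this commutation must be extracted from (c) together with the orthogonality of the pairwise $f_{(K_i,P_i)}$ within the linkage class.
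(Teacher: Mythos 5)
Your overall strategy---establishing that the two decompositions of $E_G^c$ agree on upward-closed sums, then refining to individual $f_{(K_i,P_i)}$ pieces, and finally doing the algebra computation for (d)---matches the paper's Appendix A.3. But two points in your proof of (a) need correction.

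First, the invariant that Proposition \ref{prop4.2}(a) upshifts under \emph{right} multiplication by $e_{(L',Q')}$ is the \emph{right} invariant, not the left. That proposition is stated for $E_{(K,P)}$ acting on the left, where the left invariant becomes the join; to apply it to $x\cdot e_{(L',Q')}$ you must pass to opposites, and then it is $r_0$ that becomes $r_0(x)\vee(L',Q')$. The left invariant does change (to remain linked to the new right invariant), so your conclusion that $x\cdot f_{\{L,Q\}_G}$ lands in $\bigoplus_{\{M',N'\}_G\succeq\{M,N\}_G}E_G^{c,\{M',N'\}}$ is still correct, just for the right reason rather than the one you gave.

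Second, and more substantively, the ``triangular structure'' you describe is weaker than what is needed. Knowing only that right multiplication by $f_{\{L,Q\}_G}$ moves linkage classes upward (non-strictly) together with centrality and $\sum f_{\{L,Q\}_G}=1$ does not by itself force the two filtrations to coincide: an upper-triangular idempotent can still act nontrivially off the diagonal. What actually clinches (a) is the strict orthogonality coming from Proposition \ref{prop:central}(a) combined with centrality: for $b\in E_G^{c,\{M,N\}}$ one has $b = f_{\{M,N\}_G}\,b\,f_{\{M,N\}_G}$, and hence $b\,f_{\{L,Q\}_G}=0$ whenever $\{L,Q\}_G\neq\{M,N\}_G$. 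This immediately gives $E_G^{c,\{L,Q\}}=E_G^c f_{\{L,Q\}_G}$ termwise (and in particular (a) and (b)), which is what the paper's annihilator argument extracts. You cite Proposition \ref{prop:central} but do not use part (a) of it in this strong form, and without it the step ``forces the two filtrations to coincide'' is a gap. Parts (c) and (d) of your proposal are fine, and your deduction in (d) that $f_{(K,P)}a=af_{(K,P)}$ for $a\in k\Gamma_{(G,K,P)}$ via centrality of $f_{\{K,P\}_G}$ together with orthogonality of the $f_{(K_i,P_i)}$ is exactly the point the paper makes.
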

Now we are ready to state the main result of this section. The proof is given in Appendix \ref{app:proofs}.

\begin{theorem}\label{k-algiso}
There exists a $k$-algebra isomorphism

\begin{center}
    $E_G^c\xrightarrow{\sim} \underset{\{K,P\}_G\in \G/\sim}{\bigoplus} $ Mat $_{|\{K,P\}_G|}k\Gamma_{(G,K,P)} $
\end{center}
with the following property: 

\noindent For every $\{K,P\}_G = \{(K_1,P_1),\cdots , (K_n,P_n)\}\in \G/\sim$, the isomorphism maps $f_{(K_i,P_i)}\in E_G^c$ to the idempotent matrix $e_i = \mbox{diag}(0,\cdots ,0, 1,0,\cdots , 0)\in \mbox{Mat}_{|\{K,P\}_G|}(k\Gamma_{(G,K,P)})$, for $i = 1, \cdots , |\{K,P\}_G|$, in the $\{K,P\}_G$-component. 
\end{theorem}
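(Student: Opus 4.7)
The plan is to build the isomorphism blockwise using the central idempotents attached to linkage classes together with the Peirce decomposition induced by the orthogonal idempotents inside each class. By Proposition \ref{prop:central}(b)(c), the idempotents $f_{\{K,P\}_G}$ are central, mutually orthogonal, and sum to $1$, so
\[
E_G^c = \bigoplus_{\{K,P\}_G\in\G/\sim} f_{\{K,P\}_G} E_G^c
\]
is a decomposition into two-sided ideals. It therefore suffices to fix one class $\{K,P\}_G = \{(K_1,P_1),\dots,(K_n,P_n)\}$ and produce a $k$-algebra isomorphism $f_{\{K,P\}_G}E_G^c \xrightarrow{\sim} \mathrm{Mat}_n(k\Gamma_{(G,K,P)})$ that sends $f_{(K_i,P_i)}$ to the matrix unit $e_i$.

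Within such a class, Proposition \ref{prop4.5} gives that $f_{(K_1,P_1)},\dots,f_{(K_n,P_n)}$ are mutually orthogonal idempotents summing to $f_{\{K,P\}_G}$. Hence the Peirce decomposition yields
\[
f_{\{K,P\}_G} E_G^c = \bigoplus_{i,j=1}^n f_{(K_i,P_i)} E_G^c f_{(K_j,P_j)},
\]
and by Lemma \ref{lem:w's}(c) each summand is identified, via $\omega_{ij}$, with the free $k$-module $k[{}_{(G,K_i,P_i)}\Gamma_{(G,K_j,P_j)}]$. To turn this into a description in terms of $k\Gamma_{(G,K,P)}$, I would choose, for each $i$, a basepoint $\gamma_i \in {}_{(G,K_1,P_1)}\Gamma_{(G,K_i,P_i)}$ with $\gamma_1 = e_{(K_1,P_1)}$. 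By Theorem \ref{thm:Gammas}(b) the biset ${}_{(G,K_i,P_i)}\Gamma_{(G,K_j,P_j)}$ is left- and right-free and transitive over the groups at either end, and by Theorem \ref{thm:Gammas}(c) conjugation by the element $\frac{1}{|G:P_i|}\gamma_i^{\mathrm{op}}\gamma_j$ gives a group isomorphism between $\Gamma_{(G,K_1,P_1)}$ and a set of representatives for ${}_{(G,K_i,P_i)}\Gamma_{(G,K_j,P_j)}$ (under the $(\Gamma_{(G,K_i,P_i)},\Gamma_{(G,K_j,P_j)})$-action). Using Theorem \ref{thm:Gammas}(d), all the groups $\Gamma_{(G,K_i,P_i)}$ are canonically identified with $\Gamma_{(G,K,P)}$. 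This gives a $k$-module isomorphism
\[
\Phi_{ij}: k\Gamma_{(G,K,P)} \xrightarrow{\sim} f_{(K_i,P_i)} E_G^c f_{(K_j,P_j)},\qquad a\mapsto \tfrac{1}{|G:P_i|}\,\omega_{ij}\bigl(\gamma_i^{\mathrm{op}} \cdot \tilde a \cdot \gamma_j\bigr),
\]
where $\tilde a\in k\Gamma_{(G,K_1,P_1)}$ denotes the preimage of $a$. Summing these isomorphisms across $i,j$ defines the candidate $k$-linear map $\Phi: \mathrm{Mat}_n(k\Gamma_{(G,K,P)}) \to f_{\{K,P\}_G} E_G^c$.

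It remains to check that $\Phi$ is multiplicative and sends matrix units to the correct idempotents. The point is that for $a,b\in \Gamma_{(G,K_1,P_1)}$,
\[
\bigl(\gamma_i^{\mathrm{op}} a \gamma_j\bigr)\cdot \bigl(\gamma_k^{\mathrm{op}} b \gamma_l\bigr) = \gamma_i^{\mathrm{op}} a \bigl(\gamma_j\gamma_k^{\mathrm{op}}\bigr) b \gamma_l,
\]
and by Proposition \ref{prop4.2}(c) the middle factor $\gamma_j\gamma_k^{\mathrm{op}}$ is $|G:P_j|\, e_{(K_j,P_j)}$ when $j=k$ (since the two pairs coincide there) and vanishes when $j\neq k$ because the relevant Mackey sum over the empty set of double cosets produces zero. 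This yields the required matrix-multiplication formula $\Phi(A)\Phi(B)=\Phi(AB)$, with the normalization $1/|G:P_i|$ compensating for the factor $|G:P_j|$ appearing in the composition. The identity $\Phi(e_i) = f_{(K_i,P_i)}$ follows from $\Phi_{ii}(e_{(K_1,P_1)}) = \tfrac{1}{|G:P_i|}\omega_{ii}(\gamma_i^{\mathrm{op}}\gamma_i)$ together with Proposition \ref{prop4.2}(c) again and Lemma \ref{lem:w's}(d), because $\omega_{ii}$ is the $k$-algebra isomorphism $k[\Gamma_{(G,K_i,P_i)}] \cong f_{(K_i,P_i)} E_G^c f_{(K_i,P_i)}$ sending the identity to $f_{(K_i,P_i)}$.

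The main obstacle I expect is the bookkeeping in the multiplication check: one must make sure that the canonical identification $\Gamma_{(G,K_i,P_i)}\cong \Gamma_{(G,K,P)}$ from Theorem \ref{thm:Gammas}(d) is compatible with the biset actions, so that conjugation by $\gamma_i^{\mathrm{op}}(-)\gamma_j$ really transports group elements of $\Gamma_{(G,K,P)}$ into the biset structure in the expected way, independently of the choice of the basepoints $\gamma_i$. The cleanest way to handle this is to first establish the multiplicativity of $\Phi$ on the basis consisting of matrix units with coefficient $1$ (where only the biset composition $\gamma_j\gamma_k^{\mathrm{op}}$ matters), and then extend by linearity using that each $\omega_{ij}$ is already known to be a $k$-module isomorphism by Lemma \ref{lem:w's}(c). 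Changing the choice of basepoints only conjugates $\Phi$ by a diagonal element of $\mathrm{Mat}_n(k\Gamma_{(G,K,P)})^\times$, so the isomorphism type is intrinsic.
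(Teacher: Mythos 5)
Your overall strategy is the same as the paper's: decompose $E_G^c$ into two-sided ideals by the central idempotents $f_{\{K,P\}_G}$, refine to the Peirce decomposition inside each block, identify the $(i,j)$-block with $k[{}_{(G,K_i,P_i)}\Gamma_{(G,K_j,P_j)}]$ via $\omega_{ij}$, and transport group elements around using chosen basepoints together with Proposition \ref{prop4.2}(c). This is essentially what the paper does with its $y_i=\tfrac{1}{|G:P_i|}x_i$, $\bar y_i=\tfrac{1}{|G:P|}x_i^{\mathrm{op}}$, and $\sigma_{ij}(a)=\bar y_i\,a\,y_j$. However, the numerical details you supply to make the argument run do not check out.

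The key computation is wrong. By Proposition \ref{prop4.2}(c), if $\gamma_j$ corresponds to a section with left invariant $(G,K_1,P_1,1)$ and right invariant $(G,K_j,P_j,1)$, then $\gamma_j\cdot_G\gamma_j^{\mathrm{op}} = |G:P_j|\,E_{(K_1,P_1)} = |G:P_j|\,|G:P_1|\,e_{(K_1,P_1)}$, \emph{not} $|G:P_j|\,e_{(K_j,P_j)}$ as you claim (and note the resulting pair is $(K_1,P_1)$, not $(K_j,P_j)$). Tracing this through your map, you get
\[
\Phi_{ij}(a)\Phi_{jl}(b)\;=\;\frac{|G:P_1|}{|G:P_i|}\,\omega_{il}\bigl(\gamma_i^{\mathrm{op}}\widetilde{ab}\,\gamma_l\bigr)\;=\;|G:P_1|\,\Phi_{il}(ab),
\]
so $\Phi$ is not multiplicative (unless $P_1=G$). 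The error is also visible on the diagonal: $\gamma_i^{\mathrm{op}}\cdot_G\gamma_i = |G:P_1|\,E_{(K_i,P_i)}$, so $\Phi_{ii}(e_{(K_1,P_1)}) = |G:P_1|\,f_{(K_i,P_i)}$ rather than $f_{(K_i,P_i)}$, contradicting the stated property of the isomorphism. The correct normalization is not $\tfrac{1}{|G:P_i|}$ but $\tfrac{1}{|G:P_1|\,|G:P_j|}$ (this is exactly what the paper's $\bar y_i\,a\,y_j$ encodes). In addition, your basepoint choice $\gamma_1=e_{(K_1,P_1)}$ is not an element of ${}_{(G,K_1,P_1)}\Gamma_{(G,K_1,P_1)}$, which by definition consists of unnormalized standard basis elements; it should be $E_{(K_1,P_1)}$. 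Finally, your justification for the vanishing when $j\neq k$ is off target: the Mackey product $\gamma_j\cdot_G\gamma_k^{\mathrm{op}}$ is generally a nonempty sum over $P_j\backslash G/P_k$ and is not itself zero; what actually kills the cross term is $f_{(K_j,P_j)}f_{(K_k,P_k)}=0$ coming from the Peirce decomposition, as you correctly observe earlier but then abandon in favor of the (false) empty-double-coset claim.
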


\section{The essential algebra ${\bar E}_G$}
Let $k$ be a field of characteristic zero, and fix a finite group $G$. In this section we determine the essential algebra ${\bar E}_G$ and its simple modules. Once more the results in this section are similar to the results in \cite[Section 8]{fibered biset}. 

\begin{nothing} As in the case of fibered biset functors, the 
essential algebra is isomorphic to a subalgebra of the covering algebra. Generators of this subalgebra are characterized as follows. Let $(K,P)\in \G$. We call $(K, P)$ a {\bf reduced pair} if $e_{K, P}$ is not contained in the
essential ideal $I_G$. We denote the subset of $\G$ consisting of the reduced pairs by $\R_G = \R_k(G)$. It is easy to prove that being reduced is compatible with being linked, that is, if $(K,P),(K',P')\in \mathcal{G}_G$ are $G$-linked then $(K,P)$ is reduced if and only if $(K',P')$ is reduced. (cf. \cite[Notation 8.1]{fibered biset}) We write $\widetilde{\mathcal R}_G$ for the set of linkage classes of reduced pairs for $G$. Note also that $\R_G$ is a lower set, that is, if $(K,P)$ is reduced and
$(K', P')\preceq (K, P)$, then $(K', P')$ is also reduced.

The following lemma determines a basis for the essential ideal. Its proof is similar to the proof of \cite[Lemma 8.2]{fibered biset} and our version is given in Appendix \ref{app:proofs}.
\end{nothing}

\begin{lem}
\label{lemma8.1}
The ideal $I_G$ of $E_G$ is generated as a $k$-module by the standard basis elements $\big[\frac{G\times G}{S\unlhd T}\big]$ with $(T,S)\in \Sigma_{G\times G}$ satisfying 
\begin{enumerate}[($i$)]
    \item  $p_1(T) \neq G$ and $k_1 (S) \neq 1$ or
    \item $p_1(T) = G$, $k_1 (S) = 1$ and $l_0(T,S) \not \in \R_G$
\end{enumerate}
Equivalently, it is generated by the standard basis elements $\big[\frac{G\times G}{S\unlhd T}\big]$ with $(T,S)\in \Sigma_{G\times G}$ satisfying 
 \begin{enumerate}[($i^\prime$)]
        \item $p_2(T) \neq G$ and $k_2 (S) \neq 1$ or
        \item $p_2(T) = G$, $k_2 (S) = 1$ and $r_0(T,S) \not \in \R_G$
    \end{enumerate}
\end{lem}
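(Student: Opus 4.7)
I plan to prove both containments directly. For the easy direction, suppose $(T,S)$ satisfies (i), so $|\bar G| = |p_1(T)|/|k_1(S)| < |G|$; then Proposition \ref{decomp galois} exhibits $[\frac{G\times G}{S\unlhd T}]$ as a composition through $\bar G$, placing it in $I_G$. If $(T,S)$ satisfies (ii) instead, set $(K,P) := l_0(T,S) = (k_1(T), p_1(S))$. The hypothesis $p_1(T) = G$ makes $K, P \unlhd G$, and condition (S5) together with $k_1(S) = 1$ gives $[K,P] \le k_1(S) = 1$, so $(K,P) \in \mathcal{G}_G$. Proposition \ref{prop4.2}(b) applied with $(K,P) \preceq (K,P) = l_0(T,S)$ then yields $e_{(K,P)} \cdot [\frac{G\times G}{S\unlhd T}] = [\frac{G\times G}{S\unlhd T}]$, and $(K,P) \notin \mathcal{R}_G$ forces $e_{(K,P)} \in I_G$ by definition, so the basis element lies in $I_G$.

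For the reverse direction, let $I'$ denote the $k$-span of basis elements satisfying (i) or (ii); we have $I' \subseteq I_G$ by the above. To prove equality, it suffices to show that every basis element outside $I'$ is non-zero modulo $I_G$. Let $[\frac{G\times G}{S\unlhd T}]$ satisfy neither (i) nor (ii): then $p_1(T) = G$, $k_1(S) = 1$, and $(K,P) := l_0(T,S) \in \mathcal{R}_G$. By Theorem \ref{k-algiso} combined with Lemma \ref{lem:w's}, the element lies in the covering algebra $E_G^c$ and corresponds, under the matrix decomposition, to a non-zero element of the matrix component $\mathrm{Mat}_{|\{K,P\}_G|}(k\Gamma_{(G,K,P)})$ indexed by $\{K,P\}_G$. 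Since $(K,P)$ is reduced, $e_{(K,P)}$ is non-zero modulo $I_G$, and this whole matrix component is not absorbed by $I_G$, so the basis element survives modulo $I_G$.

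The equivalence of (i)--(ii) with (i$^\prime$)--(ii$^\prime$) follows from the involution $(T,S)\mapsto (T^{op}, S^{op})$ on sections. This induces a $k$-linear anti-automorphism of $E_G$ that preserves $I_G$ (a composition through a smaller group becomes a composition through the same group after swapping order) and swaps left with right invariants, converting (i) and (ii) into (i$^\prime$) and (ii$^\prime$); hence the two descriptions generate the same $k$-submodule $I_G$.

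The hard step will be the final identification invoking Theorem \ref{k-algiso}: one must argue that the basis elements failing both (i) and (ii) correspond to matrix units in the components indexed by reduced linkage classes, and that these matrix components remain linearly independent modulo $I_G$. This uses the definition of $\mathcal R_G$ via $e_{(K,P)} \notin I_G$ together with the semisimplicity of the group algebras $k\Gamma_{(G,K,P)}$ in characteristic zero, which ensures each matrix component is either entirely inside $I_G$ or not at all, ruling out the former precisely when $(K,P)$ is reduced.
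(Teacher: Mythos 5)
Your easy direction and your treatment of the equivalence with $(i')$--$(ii')$ via the opposite anti-automorphism are both sound and agree in spirit with the paper. The problems are in the converse inclusion $I_G \subseteq I'$.

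First, the reduction step is logically insufficient. You set $I'$ to be the $k$-span of the listed basis elements and claim it suffices to show that each basis element outside $I'$ is non-zero modulo $I_G$. Since $E_G = I' \oplus I''$ with $I''$ the span of the complementary basis elements and $I' \subseteq I_G$, equality $I' = I_G$ is equivalent to $I_G \cap I'' = 0$, which requires the images of those complementary basis elements in $E_G/I_G$ to be \emph{linearly independent}, not merely each non-zero. A non-zero linear combination of them could still land in $I_G$.

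Second, the linear-independence argument you then sketch has a gap that in fact amounts to circularity. You invoke Theorem \ref{k-algiso} and claim that semisimplicity of $k\Gamma_{(G,K,P)}$ in characteristic zero forces the intersection $I_G \cap E_G^c$ to be all-or-nothing on each matrix component $f_{\{K,P\}_G} E_G^c \cong \mathrm{Mat}_{n}(k\Gamma_{(G,K,P)})$. That does not follow: semisimplicity only tells you $I_G \cap E_G^c$ is a sum of Wedderburn blocks of $E_G^c$, and each matrix component is itself a \emph{sum} of Wedderburn blocks (one per block of the group algebra $k\Gamma_{(G,K,P)}$); a two-sided ideal can pick up a proper subset of those. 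Moreover, even granting the all-or-nothing dichotomy, knowing $e_{(K,P)} \not\in I_G$ does not immediately rule out $f_{\{K,P\}_G} E_G^c \subseteq I_G$, since $e_{(K,P)} = \sum_{(L,Q) \succeq (K,P)} f_{(L,Q)}$ involves idempotents from several linkage classes. The precise statement you need --- that $(K,P)$ reduced implies $f_{\{K,P\}_G} E_G^c \cap I_G = 0$ --- is exactly Proposition \ref{prop9.5}(b), which the paper proves \emph{using} Lemma \ref{lemma8.1}. So your argument assumes what it is meant to prove.

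The paper's route to the converse is different and avoids these issues: it suffices to show that every Mackey-expansion term of a generator $\big[\frac{G\times H}{S\unlhd T}\big] \cdot_H \big[\frac{H\times G}{U\unlhd V}\big]$ with $|H|<|G|$ satisfies $(i)$ or $(ii)$. If some term $\big[\frac{G\times G}{S\ast U' \unlhd T\ast V'}\big]$ failed both, it would be covering with $l_0 \in \R_G$; then the analogue of \cite[Prop.\ 2.6]{fibered biset} gives $l(T,S)=(G,K',P',1)$ with $(K',P') \preceq l_0(T\ast V', S\ast U')$, so $(K',P')$ is reduced since $\R_G$ is a lower set, yet Proposition \ref{prop4.2}(c) exhibits a multiple of $e_{(K',P')}$ as $\big[\frac{G\times H}{S\unlhd T}\big]\cdot_H \big[\frac{G\times H}{S\unlhd T}\big]^{op} \in I_G$, a contradiction. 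That local contradiction is the step your proof lacks a substitute for.
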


Now we are ready to determine the intersection of the covering algebra and the ideal $I_G$. This also reveals the structure of the essential algebra. The proof is given in Appendix \ref{app:proofs}.

\begin{prop}
\label{prop9.5}

\begin{enumerate}[(a)]
    \item The algebra $E_G^c$ is a covering algebra for $E_G$, in the sense of Section 2.5.
       \item The equality
        \begin{equation}
         \label{redintcov}
             E_G^c\cap I_G = \bigoplus_{\begin{smallmatrix} \{K,P\}_G\in \G/\sim & \\ (K,P)\not \in \R_G  \end{smallmatrix}} f_{\{K,P\}_G}E_G^c
        \end{equation}
        holds.
    \item The canonical epimorphism $E_G\rightarrow \bar{E}_G$ maps the subalgebra 
    $$\bigoplus_{\{K,P\}_G\in \widetilde{\R}_G}f_{\{K,P\}_G}E_G^c$$ of $E_G^c$ isomorphically onto $\bar{E}_G$.
    \item For each $(K,P)\in \R_G$, the map 
        \begin{equation*}
            k\Gamma_{(G,K,P)}\rightarrow \bar{f}_{(K,P)}\bar{E}_G \bar{f}_{(K,P)}, \quad a\mapsto \bar{f}_{(K,P)}\bar{a} \bar{f}_{(K,P)}
        \end{equation*}
    is a $k$-algebra isomorphism.
    \item There is a bijective correspondence between the isomorphism classes of simple $\bar E_G$-modules and the set of triples $(K, P, [V])$ where $(K, P)$ runs over linkage classes of reduced pairs for $G$ and for a representative $(K, P)$ of the linkage class of $(K, P)$, $[V]$ runs over irreducible $k\Gamma_{(G, K, P)}$-modules. The correspondence is given by associating the triple $(K, P, [V])$ to the $\bar E_G$-module $\bar E_G\bar{f}_{(K, P)}\otimes_{k\Gamma_{(G, K, P)}} V$.
    \end{enumerate}
\end{prop}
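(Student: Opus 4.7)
The plan is to prove the five parts in order, with (c)--(e) following formally once (a) and (b) are in hand. Throughout I treat $E_G^c$ as a $k$-module with $k$-basis the standard elements $\bigl[\frac{G\times G}{S\unlhd T}\bigr]$ for $(T,S)$ covering; this is implicit in its use throughout Section 5 and can be established using the Mackey formula with Proposition \ref{prop4.2}.

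For (a), every standard basis element of $E_G$ is either covering, hence in $E_G^c$ by construction, or non-covering, in which case Proposition \ref{decomp galois} factors it through $\bar G\times \bar H$, a proper quotient of $G\times G$, which places it in $I_G$. Thus $E_G = E_G^c + I_G$. For (b), I prove both inclusions. For $\supseteq$: if $(K,P)\notin \R_G$ then $e_{(K,P)}\in I_G$ by definition, and since reducedness is a linkage invariant, $e_{\{K,P\}_G}\in I_G$; Proposition \ref{prop6.4} then gives $f_{\{K,P\}_G} = e_{\{K,P\}_G}f_{\{K,P\}_G}\in I_G$, so $f_{\{K,P\}_G}E_G^c\subseteq I_G\cap E_G^c$. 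For $\subseteq$: take $b\in I_G\cap E_G^c$ and write $b=\sum b_{\{K,P\}_G}$ in the submodule decomposition (\ref{moduldecomp}). By Lemma \ref{lemma8.1}, $b$ is also a linear combination of standard basis elements that are either non-covering or covering with $l_0\notin \R_G$. Since $b$ already lies in $E_G^c$, linear independence of the standard $E_G$-basis eliminates the non-covering contributions, forcing $b_{\{K,P\}_G}=0$ for every reduced class; transporting via the isomorphism $\omega$ of Lemma \ref{lem:w's}(b) yields $b\in\bigoplus_{(K,P)\notin\R_G}f_{\{K,P\}_G}E_G^c$.

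Parts (c)--(e) then unwind quickly. For (c), combine (a), which gives $\bar E_G\cong E_G^c/(I_G\cap E_G^c)$ via the first isomorphism theorem, with (b) and the ideal decomposition of Proposition \ref{prop:central}(c) to identify the quotient with $\bigoplus_{\{K,P\}_G\in\widetilde{\R}_G}f_{\{K,P\}_G}E_G^c$. For (d), the algebra isomorphism of Lemma \ref{lem:w's}(d) identifies $k\Gamma_{(G,K,P)}$ with $f_{(K,P)}E_G^c f_{(K,P)}$; for reduced $(K,P)$ this subalgebra sits inside the reduced part of $E_G^c$, so (c) makes the composite into $\bar f_{(K,P)}\bar E_G\bar f_{(K,P)}$ injective, while (a) makes it surjective because $f_{(K,P)}E_G f_{(K,P)} = f_{(K,P)}E_G^c f_{(K,P)} + f_{(K,P)}I_G f_{(K,P)}$ and the second summand vanishes modulo $I_G$. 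Finally (e) combines (c), (d) and Theorem \ref{k-algiso} to present $\bar E_G$ as a direct sum, indexed by $\widetilde{\R}_G$, of matrix algebras over the group algebras $k\Gamma_{(G,K,P)}$; standard Morita theory then delivers the claimed bijection with triples and the explicit simple module $\bar E_G\bar f_{(K,P)}\otimes_{k\Gamma_{(G,K,P)}}V$. The main obstacle is the $\subseteq$ direction of (b), which relies on $E_G^c$ having the covering basis as a $k$-basis and hence requires that preliminary fact to be set up cleanly before the linear-independence argument can be invoked.
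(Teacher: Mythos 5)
Your overall strategy matches the paper's closely (the paper also uses Lemma \ref{lemma8.1} to pin down the basis of $E_G^c\cap I_G$ and Lemma \ref{lem:w's} to pass between the submodule and ideal decompositions), and parts (a), (c), (d), (e) are fine. However, there is a genuine gap in the last step of part (b).

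After correctly deducing from Lemma \ref{lemma8.1} and linear independence of the standard basis that $b\in\bigoplus_{(K,P)\notin\R_G}E_G^{c,\{K,P\}}$, you write that ``transporting via the isomorphism $\omega$ of Lemma \ref{lem:w's}(b) yields $b\in\bigoplus_{(K,P)\notin\R_G}f_{\{K,P\}_G}E_G^c$.'' But $\omega$ is the map $x\mapsto xf_{\{K,P\}_G}$, which is not the identity: applying it to $b$ produces $\omega(b)$, not $b$, so this step does not place $b$ itself in the ideal-decomposition summand. What you actually need is that the two \emph{subsets} $\bigoplus_{(K,P)\notin\R_G}E_G^{c,\{K,P\}}$ and $\bigoplus_{(K,P)\notin\R_G}f_{\{K,P\}_G}E_G^c$ of $E_G^c$ coincide, not merely that they are isomorphic $k$-modules. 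There are two clean ways to close this. The paper's route: the set of non-reduced classes is an upper set in $\G/\sim$ (because $\R_G$ is a lower set and reducedness is a linkage invariant), and then the argument of Lemma \ref{lem:w's}(a) applies to give equality of the two sums over any upper set. Alternatively, a dimension count salvages your approach: your $\supseteq$ inclusion together with the $\subseteq$ inclusion gives the chain $\bigoplus_{\notin\R_G}f_{\{K,P\}_G}E_G^c\subseteq E_G^c\cap I_G\subseteq\bigoplus_{\notin\R_G}E_G^{c,\{K,P\}}$, and since $\omega$ shows the two endpoints have the same finite $k$-dimension (here $k$ is a field), all three are equal. Either fix should be stated explicitly; as written the step would not be accepted.
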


With this result it is important to know more explicit results about reduced pairs. Unfortunately we do not have a complete description of these pairs. The following theorem includes some partial results.

\begin{theorem}\label{prop:reduced}
    Let $(K,P)\in \mathcal{G}_G$.
    \begin{enumerate}[(a)]
        \item The pair $(K, P)$ is reduced if $K\le P$.
        \item If $(K,P)$ is reduced, then for any non-trivial $N\unlhd G$ with $N\le K$, we have $P\cap N \neq 1$.
      \item The pair $(K, P)$ is not reduced if there is a group $H$ and a pair $(L, Q)\in \mathcal G_H$ such that $|H|<|G|$ and
      $(G, K, P, 1)\sim(H, L, Q, 1)$.
    \end{enumerate}{}
\end{theorem}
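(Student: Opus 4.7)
The plan is to handle (c) first by a direct computation, deduce (b) via an explicit construction that invokes (c), and prove (a) by a Mackey-theoretic argument combined with the size estimate in Proposition \ref{sections prop}.

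For (c), given a linkage $(G,K,P,1) \underset{(T,S)}{\sim} (H,L,Q,1)$ with $|H|<|G|$, Proposition \ref{prop4.2}(c) yields
\[
\Big[\tfrac{G\times H}{S\unlhd T}\Big]\circ\Big[\tfrac{G\times H}{S\unlhd T}\Big]^{op} \cong |H{:}Q|\,[E_{(K,P)}]
\]
in $E_G$. The left-hand side factors through $H$ and hence lies in $I_G$; since $|H{:}Q|$ is invertible in $k$, we obtain $e_{(K,P)} \in I_G$, so $(K,P)\notin\R_G$.

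Part (b) is argued contrapositively. Assume a non-trivial $N \unlhd G$ with $N\le K$ and $P\cap N=1$. I exhibit a section of $G\times G/N$ that links $(G,K,P,1)$ to $(G/N, K/N, PN/N, 1)$ and let (c) deliver the contradiction. Explicitly, take
\[
T = \{(g_1, g_2 N) \in G\times G/N : g_1 g_2^{-1}\in K\}, \qquad S = \{(p,pN):p\in P\};
\]
$T$ is well defined because $N\le K$. Theorem \ref{thm:Goursat} together with the commutator bound $[K, PN] \le [K,N] \le N$ (from $[K,P]=1$ and $N\unlhd G$) reduces the verification to a mechanical check: $(T,S)\in\Sigma_{G\times G/N}$ has $l(T,S)=(G,K,P,1)$, $r(T,S)=(G/N,K/N,PN/N,1)$, and $(K/N,PN/N)\in\mathcal{G}_{G/N}$. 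Since $|G/N|<|G|$, (c) forces $(K,P)\notin\R_G$, contradicting the assumption of (b).

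Part (a) is the heart of the argument. Suppose $K\le P$ and, for contradiction, that $[E_{(K,P)}]\in I_G$; write $[E_{(K,P)}] = \sum_i d_i(\alpha_i\circ\beta_i)$ with $\alpha_i\in\Gamma(G\times H_i)$, $\beta_i\in\Gamma(H_i\times G)$ and $|H_i|<|G|$. Expanding each composition by the Mackey formula (Corollary \ref{Mackey Formula}) and invoking linear independence of the standard basis of $\Gamma(G\times G)$, some Mackey summand coming from $\alpha_i = \big[\tfrac{G\times H_i}{S\unlhd T}\big]$ and $\beta_i = \big[\tfrac{H_i\times G}{U\unlhd V}\big]$ must be $(G\times G)$-conjugate to $[E_{(K,P)}]$. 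Absorbing that conjugation into $(T,S)$ (which does not change $|H_i|$), the summand $\big[\tfrac{G\times G}{\tilde S\unlhd \tilde T}\big]$ with $\tilde T = T\ast {}^{(t,1)}V$ and $\tilde S = S\ast {}^{(t,1)}U$ has left invariant $(G,K,P,1)$ on the nose. The $\ast$-product monotonicities $p_1(\tilde T)\le p_1(T)$, $p_1(\tilde S)\le p_1(S)$, $k_1(\tilde T)\ge k_1(T)$ and $k_1(\tilde S)\ge k_1(S)$ (all obtained by taking $h=1$ in the $\ast$-product) then force $p_1(T)=G$, $k_1(S)=1$, $k_1(T)\le K$ and $p_1(S)\ge P$. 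The hypothesis $K\le P$ now yields the chain $k_1(T)\le K\le P\le p_1(S)$, so $(T,S)\in\Sigma_{G\times H_i}$ satisfies $P_T=G$, $K_S=1$, $K_T\le P_S$, and Proposition \ref{sections prop} applies to force $|G|\le |H_i|$, contradicting $|H_i|<|G|$.

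The main technical obstacle will be the last step: carefully tracking the $(G\times G)$-conjugacy that identifies a Mackey summand with $[E_{(K,P)}]$, pulling it back to an adjustment of $(T,S)$ and $(V,U)$ in the Mackey formula, and verifying the four $\ast$-product monotonicity estimates with enough precision to invoke Proposition \ref{sections prop} cleanly.
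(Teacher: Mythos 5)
Your proof is correct and follows essentially the paper's own route: part (a) via Mackey expansion plus the order estimate of Proposition~\ref{sections prop}, part (b) via an explicit factorization through $G/N$, and part (c) by unwinding the definition of linkage through Proposition~\ref{prop4.2}(c). Proving (c) first and feeding it into (b), and dropping the normalization step $e_{(K,P)}\cdot\bigl[\tfrac{G\times H}{S\unlhd T}\bigr]$ in (a) --- unnecessary since Proposition~\ref{sections prop} only needs the inequality $K_T\le P_S$, which your chain $k_1(T)\le K\le P\le p_1(S)$ already gives --- are minor streamlinings of the same argument.
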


\begin{proof}\begin{enumerate}[(a)]
        \item Assume for contradiction that $(K,P)$ is not reduced. Hence $e_{(K,P)}\in I_G$, that is there exist a finite group $H$ with order strictly less than the order of $G$ and sections $(T,S)$ and $(V,U)$ in $G\times H$ and $H\times G$ respectively such that $e_{(K,P)}$ occurs as a summand in $\Big[ \frac{G\times H}{S\unlhd T}\Big] \underset{H}{\cdot}\Big[ \frac{H\times G}{U\unlhd V} \Big]$. Then by the Mackey formula there is a section of $H\times G$, say $(V',U')$, such that $(S*U',T*V') = (\Delta_K(G),\Delta(P))$. Thus it follows that $l(T,S) = (G,K',P',1)$ for some $(K',P')\preceq (K,P)$. Now by Proposition \ref{prop4.2}(b), we have $e_{(K,P)}\underset{G}{\cdot} \Big[\frac{G\times H}{S\unlhd T} \Big] = \Big[\frac{G\times H}{S'\unlhd T'}\Big]$ with $(T',S') \in \Sigma_{G\times H}$ satisfying $l(T',S') = (G,K,P,1)$. Finally Proposition \ref{sections prop}(c) applied to $(T',S')$ implies that $|G|\le |H|$, a contradiction. 
        
        \item Let $(K,P)\in \R_G.$ Assume, for contradiction, that there exist $N\unlhd G$ with $N\le K$ such that $P\cap N = 1$. Let $x =\Big [ \Inf _{G/K}^G \Iso(\eta_{T})  \Def _{\frac{G/N}{K/N}}^{G/N}, \Ind_{P}^G\Iso(\eta_{S}) \Res ^{G/N}_{PN/N}\Big]$ and\\
        $y = \Big [ \Inf _{\frac{G/N}{K/N}}^{G/N} \Iso(\eta_{T}^{-1}) \Def _{G/K}^{G}, \Ind^{G/N}_{PN/N}\Iso(\eta_{S}^{-1})\Res_{P}^{G}\Big]$, with the canonical isomorphisms\\ $\eta_T : \frac{G/N}{K/N} \xrightarrow{\sim} G/K$ and $\eta_S : PN/N \xrightarrow{\sim} P$. Then $e_{(K,P)} = \frac{1}{|G:P|}x\cdot _{G/N} \frac{1}{|G:PN|} y \in I_G$, which is a contradiction since $|G/N|< |G|$.
    \item This part is straightforward by the definition of linkage.
       \end{enumerate}
\end{proof}
\begin{coro}
The pair $(K, P)\in \mathcal G_G$ is reduced in each of the following cases:
\begin{enumerate}[(a)]
        \item $K = 1$.
        \item $P = G$
\end{enumerate}
In particular the essential algebra $\bar E_G$ is non-zero for any finite group $G$.
\end{coro}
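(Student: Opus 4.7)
The plan is to reduce both cases (a) and (b) directly to Theorem \ref{prop:reduced}(a), which asserts that any pair $(K,P)\in\mathcal G_G$ satisfying $K\le P$ is reduced. In case (a), with $K=1$, the inclusion $1\le P$ is tautological. In case (b), with $P=G$, every $K$ with $(K,G)\in\mathcal G_G$ is in particular a subgroup of $G$, so $K\le G=P$. In both cases the hypothesis of Theorem \ref{prop:reduced}(a) holds and the conclusion follows at once.

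For the ``in particular'' assertion, I would observe that the pair $(1,G)$ always lies in $\mathcal G_G$, since $1$ and $G$ are normal in $G$ and $[1,G]=1$. By part (a) of the corollary (just established), $(1,G)$ is reduced, so by definition $e_{(1,G)}\notin I_G$. But from the idempotent discussion preceding Proposition \ref{prop4.5} we have $e_{(1,G)}=1_{E_G}$, hence $1_{E_G}\notin I_G$, which means $I_G$ is a proper ideal of $E_G$ and therefore $\bar E_G=E_G/I_G\neq 0$.

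Alternatively, one can invoke Proposition \ref{prop9.5}(d) to get the isomorphism $\bar f_{(1,G)}\bar E_G\bar f_{(1,G)}\cong k\Gamma_{(G,1,G)}$, and since $\Gamma_{(G,1,G)}$ is a non-trivial group (containing at least $e_{(1,G)}$), this subalgebra of $\bar E_G$ is non-zero.

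No serious obstacle is expected: the proof is a short bookkeeping exercise combining the sufficient criterion in Theorem \ref{prop:reduced}(a) with the structural description of $\bar E_G$ provided by Proposition \ref{prop9.5}. The only minor points that deserve explicit mention are the verification that $(1,G)\in\mathcal G_G$ (immediate from $[1,G]=1$) and the identification $e_{(1,G)}=1_{E_G}$, which is essentially built into the setup of the idempotents $e_{(K,P)}$.
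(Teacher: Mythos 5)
Your proof is correct and follows exactly the route the paper intends: the corollary is an immediate consequence of Theorem \ref{prop:reduced}(a) via $1\le P$ in case (a) and $K\le G=P$ in case (b), and the non-vanishing of $\bar E_G$ then follows since $(1,G)$ is always a reduced pair and $e_{(1,G)}=1_{E_G}$ cannot lie in $I_G$. No discrepancy with the paper's (unwritten, but clearly intended) argument.
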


\begin{coro} The pair $(K, P)\in \G$ is not reduced in each of the following cases:
\begin{enumerate}[(a)]
        \item $P < K$.
        \item $PK = G$ and $K\not \le P$.
\end{enumerate}
\end{coro}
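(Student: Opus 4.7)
The strategy for both parts is to apply Theorem~\ref{prop:reduced}(c): I will produce a finite group $H$ with $|H|<|G|$ together with a pair $(L,Q)\in\mathcal G_H$ such that $(G,K,P,1)\sim(H,L,Q,1)$. By the characterization of linkage in terms of crossed modules established earlier in this section, it suffices to exhibit an isomorphism of crossed modules $(P,G/K,i_P)\cong(Q,H/L,i_Q)$, where $i_P$ is the restriction of the quotient map $G\to G/K$.

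For part (a), the assumption $P\le K$ combined with $[K,P]=1$ gives $[P,P]\le[K,P]=1$, so $P$ is abelian. Moreover $i_P=0$, and the conjugation action of $G$ on $P$ descends to an action of $G/K$ on $P$ since $K$ centralises $P$. The natural candidate is then $H:=P\rtimes(G/K)$ formed with this action, and $L=Q:=P\times\{1\}\unlhd H$. Since $|P|<|K|$, one has $|H|=|P|\cdot|G/K|<|K|\cdot|G/K|=|G|$. Both $L$ and $Q$ are normal in $H$, and $[L,Q]=[P,P]=1$, so $(L,Q)\in\mathcal G_H$. Under the obvious identifications $Q\cong P$ and $H/L\cong G/K$, the map $i_Q$ is zero and the conjugation action of $H/L$ on $Q$ in $H$ is precisely the original action of $G/K$ on $P$, so the two crossed modules match.

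For part (b), since $PK=G$, the map $i_P\colon P\to G/K$ is surjective with kernel $P\cap K$. I take the very economical choice $H:=P$, $L:=P\cap K$, $Q:=P$. Normality of $L$ in $H$ follows from $K\unlhd G$, and $Q$ is normal trivially; the bracket $[L,Q]=[P\cap K,P]\le[K,P]=1$, so $(L,Q)\in\mathcal G_H$. The second isomorphism theorem furnishes a canonical isomorphism $P/(P\cap K)\xrightarrow{\sim}PK/K=G/K$, which together with the identity on $Q=P$ gives an isomorphism of crossed modules $(P,P/(P\cap K),\mathrm{can})\xrightarrow{\sim}(P,G/K,i_P)$. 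Finally, $|H|=|P|$ and $|G|=|PK|=|P|\cdot|K|/|P\cap K|$, so $|H|<|G|$ precisely when $|P\cap K|<|K|$, i.e.\ when $K\not\le P$, which is our hypothesis.

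The only point that requires any thought in either argument is the verification that the action of $H/L$ on $Q$ matches the action of $G/K$ on $P$. In (a) this is hard-wired into the semidirect product construction, and in (b) both actions are restrictions of the single conjugation action of $G$ on its normal subgroup $P$, so they automatically agree.
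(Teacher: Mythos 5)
Your proof is correct and constructs exactly the same linked triples as the paper: $H=P\rtimes(G/K)$ with $(L,Q)=(P,P)$ in part (a), and $H=P$ with $(L,Q)=(K\cap P,P)$ in part (b), both applied via Theorem~\ref{prop:reduced}(c). The only difference is in how linkage is verified: the paper writes down the section $(T,S)$ of $G\times H$ explicitly and reads off its Goursat correspondents, whereas you appeal to the equivalence between linkage and crossed-module isomorphism established earlier in Section~5. Your route via crossed modules is arguably cleaner, especially in (b) where the required isomorphism is just the second isomorphism theorem; the paper's route is more concrete but forces one to write out $T$ in coordinates. Both are complete, and the small checks you flag (that $(L,Q)\in\mathcal G_H$, that the two conjugation actions agree, and the order inequalities) are all verified correctly.
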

\begin{proof} In both cases it is sufficient to find a triple $(H, L, Q)$ with $|H|<|G|$ which is linked to $(G, K, P)$.
\begin{enumerate}[(a)]
 \item Let $G/K$ act on $P$ by conjugation and construct the semi-direct product $\bar{G} = P\rtimes G/K$. Write $\bar{P}$ for the image (under $\alpha^{-1}: x\mapsto 
(x, 1)$) of $P$ in $\bar{G}$ so that we have an isomorphism $\beta : \bar{G}/\bar{P} \xrightarrow{\sim} G/K$.
    
We define the subgroups $S = \{ (x, (x,1)) : x\in P \}$ and $T=\{ (g, (x, yK)): gK = \beta((x, yK)\bar P)\}$ of
$G\times \bar G$ and claim that $S\unlhd T$. Indeed we clearly have $S\le T$ and the normality follows by direct calculations. Also note that $P < K$ and 
 $[K, P] = 1$, hence $P$ is abelian. The Goursat correspondents of $T$ and $S$ are $(G, K, \beta, \bar P, \bar G)$ and $(P, 1, \alpha, 1, \bar P)$ respectively. Hence the quadruples $(G, K, P, 1)$ and $(\bar G, \bar P, \bar P, 1)$ are linked, as required.
 
 \item Let $S = \Delta(P)\le G\times P$ and $T = \{(g, h)\in G\times P | g^{-1}h\in K\}$. Clearly $S$ is a normal subgroup of $T$. Since $G = PK$, the pair $(T, S)$ is covering and hence $(G, K, P)$ is linked to $(P, K\cap P, P)$.
 Note that $K\not \le P$ is also necessary since otherwise $G = PK$ implies $G=P$ and the factorization is not over a group of smaller order.
    \end{enumerate}
\end{proof}

\begin{nothing}\label{sgbijirr}
 Finally we parameterize simple modules over $\bar{E}_G$.  Let
    \begin{equation*}
       \mathcal S_G = \mathcal S_k(G) := \{((K,P),[V])\mid (K,P)\in \R_G, [V]\in \Irr(k\Gamma_{(G,K,P)})\}.
    \end{equation*}
Then define an equivalence relation on $\mathcal S_G$ by
$        ((K,P),[V]) \sim ((K',P'),[V']) $ if $(K,P)$ and $(K',P')$ are $G$-linked and the canonical bijection $\mbox{\rm Irr}(k\Gamma_{(G,K',P')})\xrightarrow{\sim} \mbox{\rm Irr}(k\Gamma_{(G,K,P)})$ from Theorem \ref{thm:Gammas} maps $[V']$ to $[V]$.
 Also let $\Tilde{\mathcal S}_G$ be a set of representatives of the equivalence classes of $\mathcal S_G$, that is,
    \begin{equation*}
        \Tilde{\mathcal S}_G := \{((K,P),[V])\mid (K,P)\in \Tilde{\mathcal{R}}_G, [V]\in \mbox{\rm Irr}(k\Gamma_{(G,K,P)})\}.
    \end{equation*}

By the canonical isomorphism from Proposition \ref{prop9.5}(c), we can view each simple $k\Gamma_{(G,K,P)}$-module as a simple $\bar{f}_{(K,P)}\bar{E}_G \bar{f}_{(K,P)}$-module, and we can view $\bar{E}_G \bar{f}_{(K,P)}$ as $(\bar{E}_G,k\Gamma_{(G,K,P)})$-bimodule. Hence $\bar{E}_G \bar{f}_{(K,P)}\otimes_{k\Gamma_{(G,K,P)}} V$ is a simple $\bar E_G$-module for each simple $k\Gamma_{(G, K, P)}$-module. This induces a bijection between the set $\tilde{\mathcal S}_G$ and Irr$(\bar{E}_G)$.
\end{nothing}

\section{Simples functors over $k\Gamma$}
In this final section we parameterize simple section biset functors. Once more we follow \cite{fibered biset}. In the previous section we have classified simple modules over the endomorphism ring $E_G$ for any finite group $G$. This gives us the map from the set of seeds to the set of isomorphism classes of simple section biset functors. In this section we introduce an equivalence relation on seeds to make this map bijective. 

\begin{nothing}
By the parametrization in Section \ref{sgbijirr}, we write the set of all seeds for $k\Gamma$ as
\[
{\rm{Seeds}}(k\Gamma) = \{ (G, K, P, [V])| G\in Ob(\mathcal P_\Gamma), (K, P, [V])\in \tilde{\mathcal S}_G \}.
\]
Given a seed $(G, K, P, [V])$, we construct a simple $E_G$-module $\widetilde V$ by
\[
\widetilde V = \bar E_G\bar f_{(K, P)} \otimes_{k\Gamma_{(G, K, P)}} V.
\]
Hence as explained in Section \ref{sec:a-mods}, we associate a simple section functor $S_{G, \widetilde V} = S_{G, K, P, [V]}$ to the seed $(G, K, P, [V])$. Clearly replacing $V$ with another isomorphic copy of $V$ does not change the isomorphism type of the corresponding simple section biset functor. In particular we obtain a function
\[
\omega: {\rm Seeds}(k\Gamma) \to {\rm Irr}(k\Gamma)
\]
where Irr$(k\Gamma)$ denotes the set of isomorphism classes of simple section biset functors. Note that $\omega$ is surjective. Indeed if $S$ is a simple section biset functor, we let $G$ be a minimal group for $S$. Then $S(G)$ is a simple module for the essential algebra $\bar E_G$, and hence there is a triple $(K, P, [V])\in \tilde{\mathcal S}_G$ such that $S(G) \cong \bar E_G\bar f_{(K, P)} \otimes_{k\Gamma_{(G, K, P)}} V$. Therefore $S\cong S_{G, K, P, [V]}$.
\end{nothing}
\begin{nothing}
As in \cite{fibered biset}, the relation on Seeds$(k\Gamma)$ given by
\[
(G, K, P, [V]) \sim (H, L, Q, [W])
\] 
if and only if $(G, K, P, 1)\sim (H, L, Q, 1)$ and $V\cong k[{}_{(G, K, P)}\Gamma_{(H, L, Q)}]\otimes_{k\Gamma_{(H, L, Q)}} W$
is an equivalence relation. If these conditions are satisfied, we say that the quadruples $(G, K, P, [V])$ and $(H, L, Q, [W])$ are \emph{linked}. We claim that the function $\widetilde \omega$ induced by $\omega$ on the set of linkage classes Seeds$(k\Gamma)/\sim$ of seeds is bijective.
To prove this claim, we follow the techniques from \cite{fibered biset}. Ideas used in the proof below can be found in \cite[Section 9]{fibered biset}.
\end{nothing}
\begin{nothing}\label{sec:isomparts}
Let $M$ be a section biset functor and $G$ and $H$ be finite groups. We have decompositions
\[
M(G) = \bigoplus_{(K, P)\in\G} f_{(K, P)}M(G),\quad M(H) = \bigoplus_{(L, Q)\in \mathcal G_H} f_{(L, Q)}M(H).
\]
Although these decompositions may not be related to each other, in general, the terms corresponding to linked quadruples are isomorphic. Indeed suppose there are pairs $(K, P)\in\G$ and $(L, Q)\in \mathcal G_H$ such that the triples $(G, K, P)$ and $(H, L, Q)$ are linked so that
the set ${}_{(G, K, P)}\Gamma_{(H, L, Q)}$ is non-empty. Then, for each $x\in {}_{(G, K, P)}\Gamma_{(H, L, Q)}$, the map
\[
f_{(K, P)}M(G) \to f_{(L, Q)}M(H), m\mapsto |G:P|^{-1}x\cdot m
\]
is an isomorphism of $k$-modules with the inverse given by multiplication by $x^{\rm op}$ (cf. \cite[Lemma 9.3]{fibered biset}). This follows easily since $xx^{\rm op} = e_{(K, P)}$ and $x^{\rm op}x = e_{(L, Q)}$.
\end{nothing}
\begin{nothing}
Let $(H, L, Q, [W])$ be a seed and $S_{H, \widetilde W}$ be the corresponding simple section biset functor so that 
\[
S_{H, \widetilde W}(H) = \widetilde W  = \bar E_G \bar f_{(L, Q)} \otimes_{k\Gamma_{(H, L, Q)}} W.
\]
Note that since there is an isomorphism of $k$-algebras $k\Gamma_{(H, L, Q)}\cong f_{(L, Q)}E_H^c f_{(L, Q)}$, by Lemma \ref{lem:w's}, we may regard $f_{(L, Q)}S_{H, \widetilde W}(H)$ as a $k\Gamma_{(H, L, Q)}$-module, and as such, it is isomorphic to $W$. 

In order to determine other seeds which corresponds to the functor $S_{(H, \widetilde W)}$, we need to know the evaluations $S_{H, \widetilde W}(G)$ for groups with $|G| = |H|$. Hence let $G$ be a group of order $|H|$. Clearly there is a seed of the form $(G, K, P, [V])$ corresponding to $S_{H, \widetilde W}$ only if $S_{H, \widetilde W}(G)$ is non-zero, which we assume from now on. Then since $H$ is a minimal group for $S_{H, \widetilde W}$, $G$ is also minimal. In particular $S_{H, \widetilde W}(G)$ is annihilated by $I_G$, and hence it is a simple $\bar E_G$-module. 
Put $\widetilde V = S_{H, \widetilde W}(G)$. By Section \ref{sgbijirr}, there is a triple $(K, P, [V])$ such that $\widetilde V \cong
\bar E_G \bar f_{(K, P)} \otimes_{k\Gamma_{(G, K, P)}}V$. Hence $\bar f_{(K, P)}S_{H, \widetilde W}(G) \cong V$ and $S_{H, \widetilde W}\cong S_{G, \widetilde V}$.

Now by Lemma \ref{lem:technical}, we must have a surjective homomorphism
\[
k{}_{(G, K, P)}\Gamma_{(H, L, Q)} \otimes_{k\Gamma_{(H, L, Q)}} W\to V
\]
of $k\Gamma_{(G, K, P)}$-modules. In particular the set ${}_{(G, K, P)}\Gamma_{(H, L, Q)}$ is non-empty, which implies that the quadruples $(G, K, P, 1)$ and $(H, L, Q, 1)$ are linked. Moreover by Theorem \ref{thm:Gammas}, the $k\Gamma_{(G, K, P)}$-module $k{}_{(G, K, P)}\Gamma_{(H, L, Q)} \otimes_{k\Gamma_{(H, L, Q)}} W$ is simple. Hence we must have an isomorphism
\[
k{}_{(G, K, P)}\Gamma_{(H, L, Q)} \otimes_{k\Gamma_{(H, L, Q)}} W\cong V
\]
of $k\Gamma_{(G, K, P)}$-modules. As a result the seeds $(G, K, P, [V])$ and $(H, L, Q, [W])$ are linked, that is, seeds corresponding to the simple functor $S_{H, \widetilde W}$ are all linked to $(H, L, Q, [W])$.
\end{nothing}
\begin{nothing}
Finally we show that if $(G, K, P, [V])$ and $(H, L, Q, [W])$ are linked then $S_{G, \widetilde V}\cong S_{H, \widetilde W}$. First note that by Lemma \ref{lem:equalOrder}, the groups $G$ and $H$ are of the same order. As discussed above, the evaluation $S_{H, \widetilde W}(G)$
is a simple $\bar E_G$-module. Also since $(G, K, P, 1)\sim (H, L, Q, 1)$, we get $\bar f_{(K, P)}S_{H, \widetilde W}(G) \cong \bar f_{(L, Q)}S_{H, \widetilde W}(H)$ as $k$-modules, by Section \ref{sec:isomparts}. In particular $\bar f_{(K, P)}S_{H, \widetilde W}(G)$ is non-zero. It remains to show that there is an isomorphism $S_{H, \widetilde W}(G) \cong \widetilde V$ of $\bar E_G$-modules. We already know that
\[
V \cong k{}_{(G, K, P)}\Gamma_{(H, L, Q)}\otimes_{k\Gamma_{(H, L, Q)}}W. 
\]
Hence by Lemma \ref{lem:technical}, there is a homomorphism $V\to \bar f_{(K, P)}S_{H, \widetilde W}(G)$ of $k\Gamma_{(G, K, P)}$-modules. Now we have the following isomorphisms.
\begin{eqnarray*}
\{0\}\not = \Hom_{k\Gamma_{(G, K, P)}} (V, \bar f_{(K, P)}S_{H, \widetilde W}(G))
\cong \Hom_{k\Gamma_{(G, K, P)}} (V, \Hom_{\bar E_G}(\bar E_G\bar f_{(K, P)}, S_{H, \widetilde W}(G)))\\ 
\cong \Hom_{\bar E_G}(\bar E_G\bar f_{(K, P)}\otimes_{k\Gamma_{(G, K, P)}} V, S_{H, \widetilde W}(G))
\cong \Hom_{\bar E_G}(\widetilde V, S_{H, \widetilde W}(G)).
\end{eqnarray*}
In particular there is a non-zero homomorphism $\widetilde V\to S_{H, \widetilde W}(G)$. Since both of these modules are simple, they must be isomorphic. With this step, we have completed the proof of following theorem.
\end{nothing}
\begin{theorem}
There is a bijective correspondence between
\begin{enumerate}[(a)]
\item the set Irr$(k\Gamma)$ of isomorphism classes of simple section biset functors
\item the set Seeds$(k\Gamma)/\sim$ of linkage classes of quadruples $(G, K, P, [V])$.
\end{enumerate}
\end{theorem}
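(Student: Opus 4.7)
The plan is to establish the bijection in two main steps: first, construct a surjection $\omega: \mathrm{Seeds}(k\Gamma) \to \mathrm{Irr}(k\Gamma)$ using the classification of simple $\bar E_G$-modules from Proposition \ref{prop9.5}, and second, show that the fibers of $\omega$ are exactly the linkage equivalence classes. The construction assigns to each seed $(G, K, P, [V])$ the simple section biset functor $S_{G, \widetilde V}$, where $\widetilde V = \bar E_G \bar f_{(K,P)} \otimes_{k\Gamma_{(G,K,P)}} V$ is a simple $\bar E_G$-module by Proposition \ref{prop9.5}(e), so the general theory of Section \ref{sec:a-mods} produces $S_{G, \widetilde V}$ from this data.

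For surjectivity, given any simple $S \in \mathrm{Irr}(k\Gamma)$, I pick a minimal group $G$ with $S(G) \neq 0$. Minimality forces $S(G)$ to be annihilated by $I_G$, hence to be a simple $\bar E_G$-module, so $S(G) \cong \widetilde V$ for some triple $(K, P, [V]) \in \tilde{\mathcal S}_G$ via Proposition \ref{prop9.5}(e). Because simple functors are determined up to isomorphism by their evaluation at a minimal group (Section \ref{sec:a-mods}), we get $S \cong \omega(G, K, P, [V])$.

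To show that seeds yielding isomorphic simple functors must be linked, suppose $S_{G, \widetilde V} \cong S_{H, \widetilde W}$. Minimality of both $G$ and $H$ forces them to have the same order, and evaluating the isomorphism at $G$ gives $S_{H, \widetilde W}(G) \cong \widetilde V$ as simple $\bar E_G$-modules. Applying $\bar f_{(K, P)}$ and identifying $\bar f_{(K, P)} \bar E_G \bar f_{(K, P)}$ with $k\Gamma_{(G, K, P)}$ via Proposition \ref{prop9.5}(d), one obtains a non-zero $k\Gamma_{(G, K, P)}$-module map from a quotient of $k[{}_{(G, K, P)}\Gamma_{(H, L, Q)}] \otimes_{k\Gamma_{(H, L, Q)}} W$ onto $V$. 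Non-emptiness of ${}_{(G, K, P)}\Gamma_{(H, L, Q)}$ already yields linkage of the triples $(G,K,P,1) \sim (H,L,Q,1)$, and the simplicity of the source module (Theorem \ref{thm:Gammas}(d)) turns the surjection into an isomorphism, providing the last ingredient for linkage of the quadruples.

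Conversely, given linked seeds, I pick any $x \in {}_{(G, K, P)}\Gamma_{(H, L, Q)}$ and use left multiplication by $x$ together with Hom--tensor adjunction to produce a non-zero $\bar E_G$-linear map $\widetilde V \to S_{H, \widetilde W}(G)$; since both modules are simple $\bar E_G$-modules, this map is an isomorphism, hence $S_{G, \widetilde V} \cong S_{H, \widetilde W}$. The main obstacle lies in the middle step: converting the abstract $\bar E_G$-module isomorphism $S_{H, \widetilde W}(G) \cong \widetilde V$ into an explicit identification of Goursat data, which requires the compatibility between idempotent-sliced evaluations $\bar f_{(K,P)} M(G) \cong \bar f_{(L,Q)} M(H)$ established in Section \ref{sec:isomparts} together with the Morita-type equivalence of Theorem \ref{thm:Gammas}(d); a careful bookkeeping of opposite bisets $x^{\mathrm{op}}$ and of the idempotents $e_{(K,P)}$, $e_{(L,Q)}$ will be needed to certify that the resulting transfer respects the $k\Gamma_{(G,K,P)}$- and $k\Gamma_{(H,L,Q)}$-structures.
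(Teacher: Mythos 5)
Your proposal is correct and follows essentially the same route as the paper: define $\omega$ from seeds to simple functors, establish surjectivity via a minimal group, prove that seeds with isomorphic simple functors are linked using the technical Lemma~\ref{lem:technical} together with simplicity of the induced $k\Gamma_{(G,K,P)}$-module (Theorem~\ref{thm:Gammas}(d)), and prove the converse via the idempotent-transport isomorphism of Section~\ref{sec:isomparts} and Hom--tensor adjunction. One small point you leave implicit that the paper makes explicit: in the converse direction you assert that $S_{H,\widetilde W}(G)$ is a simple $\bar E_G$-module, but this requires knowing that $|G|=|H|$ (so that $G$ is also minimal for $S_{H,\widetilde W}$); in the forward direction minimality supplies this for free, but in the converse direction it is Lemma~\ref{lem:equalOrder} — which uses that both pairs are \emph{reduced} — that forces the orders to coincide. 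You should cite that lemma rather than fold the equality into the ``careful bookkeeping'' deferred at the end.
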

\begin{remarki}
Let $G$ and $H$ be non-isomorphic finite groups. For reduced pairs $(K,P)\in \R_G$ and $(L,Q)\in \R_H$ if there exists a section $(T,S)$ such that $(G, K, P, 1)\underset{(T,S)}{\sim} (H, L, Q, 1)$ then we can take $V := k{}_{(G,K,P)}\Gamma_{(H,L,Q)}\otimes_{k\Gamma_{(H,L,Q)}} W$ to be the irreducible  $k\Gamma_{(G,K,P)}$-module corresponding to $W\in \Irr(k\Gamma_{(H,L,Q)})$. Then $S_{G,\widetilde V} \cong S_{H,\widetilde W}$, which shows that there exists simple section biset functors with non-isomorphic minimal groups.\\

For example let $G$ be the group of quaternions, $\langle x,y \,|\, x^4 = 1,\, yxy^{-1} = x^{-1},\, x^{2} = y^{2}\rangle$ and $H := \langle a,b\,|\, a^4=b^2 = 1,\, bab^{-1} = a^{-1}\rangle$ the dihedral group of order 8. Consider the section $(T,S)$ of $G\times H$ given by
    \[
        T := \langle(x,a),(y,b)\rangle\, \rm{and}\, S :=\langle(x,a)\rangle
    \]
Clearly $S$ is a subgroup of $T$ and since $^{(y,b)}(x,a) = (x^{-1}, a^{-1})$ moreover $S$ is normal in $T$ and hence $(T,S)$ is a section of $G\times H$. The left and right invariant of $(T,S)$ are as follows
    \[
        l(T,S) = (G, \langle x^2\rangle, \langle x\rangle, 1) \quad r(T,S) = (H, \langle a^2\rangle, \langle a\rangle, 1).
    \]
So $(K,P) := l_0(T,S) = (\langle x^2\rangle,\langle x\rangle) \in \G$ and $(L,Q) := r_0(T,S) = (\langle a^2\rangle, \langle a\rangle)\in \mathcal{G}_H$ are linked pairs. Moreover by Theorem \ref{prop:reduced} (a) both $(K,P)$ and $(L,Q)$ are reduced pairs.

\end{remarki}

\appendix

\section{Proofs}\label{app:proofs}
\begin{nothing}\ {\bf Proof of \ref{prop:central}.} 
\begin{enumerate}[(a)]
    \item 
    By definition $f_{\{K,P\}}$ is a sum of $f_{(K',P')}$'s where $(K',P')$ runs through the linkage class of $\{K,P\}_G$. So for the product given in (a) being non-zero there should be at least a pair say $(K',P')$ in the linkage class $\{K,P\}_G$  and a pair $(L',Q')$ in the linkage class of $\{L,Q\}_G$ such that the following product is non-zero $f_{(K', P')}[\frac{G\times G}{S\unlhd T}]f_{(L', Q')}$. Moreover, $f_{(L',Q')}$ is the sum of $e_{(L'', Q'')}$'s where $(L'', Q'')$ runs through all poset elements of $\G$ which are bigger than $(L', Q')$. Then by the similar reasoning  $f_{(K', P')}[\frac{G\times G}{S\unlhd T}]e_{(L'', Q'')}$ must be non-zero at least for some $(L', Q') \preceq (L'', Q'')$. Hence we have $[\frac{G\times G}{S\unlhd T}]e_{(L'', Q'')} \not = 0$, and by the Proposition \ref{prop4.2} its in the form $[\frac{G\times G}{U\unlhd V}]$ for some $(V,U)\in \Sigma_G^c$. Also by the same proposition we know that the right middle invariant of section $(V,U)$ is bigger than $(L'', Q'')$. Let indicate by $(K'',P'')$ the left middle invariant of the section $(V,U)$. Then $f_{(K', P')} e_{(K'',P'')}[\frac{G\times G}{S\unlhd T}]e_{(L'', Q'')} =  f_{(K', P')}[\frac{G\times G}{S\unlhd T}]e_{(L'', Q'')} \not = 0$ and so  $f_{(K', P')} e_{(K'',P'')} \not = 0$, which implies $(K'',P'') \preceq (K',P')$ by Proposition \ref{prop4.5}. If we gather it all together,
    \begin{center}
        $ \{L,Q\}_G = \{L', Q'\}_G \preceq \{L'',Q''\}_G \preceq \{r_0(V,U)\}_G = \{l_0(V,U)\}_G = \{K'',P''\}_G \preceq \{K', P'\}_G = \{K, P\}_G.$
    \end{center}
Note that by the similar reasoning we also can derive $\{K,P\}_G \preceq \{L,Q\}_G$ and hence there must be an equality.
    
    \item By the Proposition \ref{prop6.4}, for $\{K,P\}_G \in \G / \sim $, the elements $f_{\{K,P\}_G}$ are the mutually orthogonal idempotents of the covering algebra. Also their sum is equal to the sum of all $f_{(K,P)}$ where $(K,P)$ run through all the poset elements of $\G$ and this sum is equal to $1$ by the equation (\ref{allfsequal1}). Hence for any $b\in E_G^c$
    \[ \sum _{\{K,P\}_G\in \G/\sim} f_{\{K,P\}_G} b = b = \sum _{\{K,P\}_G\in \G/\sim} b f_{\{K,P\}_G} 
    \]
    and by the previous case we obtain $f_{\{K,P\}_G}b = bf_{\{K,P\}_G}$ for every $\{K,P\}_G\in \G/\sim$.
    
    \item This is immediate from case $(b)$.
\end{enumerate}

\end{nothing}

\begin{nothing}\ {\bf Proof of \ref{lem:w's}.} 
        Using Proposition \ref{prop4.5} one can derive that the product $[\frac{G\times H}{S\unlhd T}] f_{(L,Q)}$ is zero for any section $(T,S)\in \Sigma_{G\times G}^c$ with $r_0(T,S) = (K,P)$ and $(K,P)\npreceq (L,Q)\in \G$. Similarly one can get its left-sided version too.
        \begin{enumerate}[(a)]
        \item To prove the equality first note that by the Proposition \ref{prop:central} the left hand side equals the annihilator of the set of all $f_{\{K',P'\}_G}$ with $\{K',P'\}_G \nsucceq \{K,P\}_G$. On the other hand by the observation above the right hand side also annihilates $f_{\{K',P'\}_G}$ for all $\{K',P'\}_G \nsucceq \{K,P\}_G$, which shows that the right hand side is contained in the left hand side. But right hand side obviously contains the left hand side since it contains $f_{\{L,Q\}_G}$ for every $\{L,Q\}_G\preceq \{K,P\}_G$.
        \item By the same way we proved previous case one can also show that 
        \[
            \bigoplus_{\{K,P\}_G \prec \{L,Q\}_G\in \G/\sim}E_G^c f_{\{L,Q\}_G} = \bigoplus_{\{K,P\}_G \prec \{L,Q\}_G\in \G/\sim}E_G^{c,\{L,Q\}}.
        \]
        Hence $E_G^{c,\{K,P\}}$ and $E_G^cf_{\{K,P\}_G}$ are both complements to the same submodule given above. So $\omega$ is an isomorphism of $k$-modules.
        \item By definition, and since $f_{\{K,P\}_G}$ is central in $E_G^c$ we have a direct sum decomposition
        \begin{equation}
            \label{decomp1}
            E_G^{c,\{K,P\}} = \bigoplus_{i,j =1}^n k[_{(G,K_i,P_i)}\Gamma _{(G,K_j,P_j)}]
        \end{equation}
        and
        \[
            E_G^c f_{\{K,P\}_G} = f_{\{K,P\}_G} E_G^c f_{\{K,P\}_G} = \bigoplus_{i,j =1}^n f_{\{K_i,P_i\}_G} E_G^c f_{\{K_j,P_j\}_G}.
        \] into $k$-submodules. Moreover for $b_{ij} \in k[_{(G,K_i,P_i)}\Gamma _{(G,K_j,P_j)}]$ using the fact we observed in the beginning of the proof its clear that
        \[
            \omega (b_{ij}) = f_{\{K,P\}_G} b_{ij} f_{\{K,P\}_G} = f_{(K_i,P_i)} b_{ij} f_{(K_j,P_j)} \in f_{(K_i,P_i)} E_G^c f_{(K_j,P_j)}.
        \]
        Then each $\omega_{ij}$ is an isomorphism of $k$-modules since $\omega$ is an isomorphism of $k$-modules.
        \item Let $a$ and $b$ be arbitrarily chosen elements in the $k$-algebra $k[\Gamma _{(G,K,P)}]$. Also enumerate $\{K, P\}_G $ as in the previous case, and choose $(K,P) = (K_i,P_i)$. Using the fact that $f_{\{K,P\}_G}$ is central and $f_{(K_i,P_i)}$'s are orthogonal idempotents in $E_G^c$ we have
        \begin{center}
            $\omega _{ii} (a) \omega _{ii} (b) = f_{(K,P)} a f_{(K,P)} b f_{(K,P)} = f_{(K,P)} a f_{\{K,P\}_G} b f_{(K,P)} = f_{(K,P)} f_{\{K,P\}_G} ab f_{(K,P)} = f_{(K,P)} ab f_{(K,P)} = \omega_{ii} (ab).$
        \end{center}
    \end{enumerate}
\end{nothing}

\begin{nothing}\ {\bf Proof of \ref{k-algiso}.} 
In the Proposition \ref{prop:central} we have given a decomposition of $E_G^c$ into two-sided ideals. So it suffices to show that there exists a $k$-algebra isomorphism between $E_G^{c}f_{\{K,P\}_G}$ and Mat$_n (k\Gamma_{(G,K,P)})$. Aiming this we construct a map from  Mat$_n(k\Gamma_{(G,K,P)})$ to $E_G^{c, \{K,P\}}$ and since we already have a $k$-algebra isomorphisms $\omega$, composing this two maps we get desired isomorphism.

  Enumerate the elements of $\{K,P\}_G$ as in the Lemma \ref{lem:w's}. Then chose an element $(T_i,S_i)\in \Sigma _{G\times G}^c$ with $l_0(T_i,S_i) = (K,P)$ and $r_0(T_i,S_i) = (K_i,P_i)$, and set $x_i: = [\frac{G\times G}{S\unlhd T}]$. Denote by $y_i = \frac{1}{|G:P_i|} x_i$ and by $\bar{y_i} = \frac{1}{|G:P|} x_i^{op}$. Then its an easy calculation to show that $y_i\underset{G}{\cdot} \bar{y_i} = e_{(K,P)}$ and $\bar{y_i} \underset{G}{\cdot} y_i = e_{(K_i,P_i)}$. Hence the maps $\sigma _{ij} : k\Gamma_{(G,K,P)} \rightarrow k[_{(G,K_i,P_i)}\Gamma_{(G,K_j,P_j)}], \; a\mapsto \bar{y_i} a y_j$ are $k$-module isomorphisms with the inverse maps $b\mapsto y_i b \bar{y_j}$. Taking the direct sum of the maps $\sigma _{ij}$ and using the direct sum decomposition in (\ref{decomp1}) we obtain a $k$-module isomorphism $\sigma :Mat_n(k\Gamma_{(G,K,P)})\rightarrow E_G^{c, \{K,P\}}$. Infact it is a $k$-algebra isomorphism. So we have a desired isomorphism $\omega \circ \sigma $ of $k$-algebras, which maps diagonal idempotent matrix $e_i$ to $f_{(K_i,P_i)}$.
\end{nothing}

\begin{nothing}{\bf Proof of 6.2}
Let $(T,S)\in \Sigma _{G\times G}$. If $(T,S)$ satisfies $(i)$ then by the decomposition in Proposition \ref{decomp galois} we have $[\frac{G\times G}{S\unlhd T}] \in I_G$. If it satisfies $(ii)$ then $(K,P) := l_0{(T,S)} \not \in \R_G$, which by definition means that $e_{(K,P)} \in I_G$, but then since $I_G$ is an ideal $[\frac{G\times G}{S\unlhd T}] = e_{(K,P)} [\frac{G\times G}{S\unlhd T}] \in I_G$.

Conversely, to prove that every element in $I_G$ can be written as a $k$-linear combination of elements as given in the lemma it suffices to show that $[\frac{G\times H}{S\unlhd T}] \underset{H}{\cdot} [\frac{H\times G}{U\unlhd V}]$ can be written as such a linear combination for any group $|H| < |G|$ and $(T,S)\in \Sigma_{G\times H}$, $(V,U)\in \Sigma_{H\times G}$. By the Mackey formula this product consist of the elements of the form $[\frac{G\times G}{S*U' \unlhd T*V'}]$ for some $(V',U')\in \Sigma _{H\times G}$. Assume that $(T*V',S*U')$ does not satisfy conditions $(i)$ and $(ii)$. So we have that the left invariant of the section $(T*V', S*U')$ equal to $(G,K,P,1)$ for some $(K,P)\in \G$ and $(K,P) \in \R_G$. Proving the similar version of Proposition \cite[2.6]{fibered biset} for the elements of the poset $\G$ one can show that $l(T,S)= (G,K',P',1)$ with $(K',P')\preceq (K,P)$. But then since $\R_G$ is a lower set the pair $(K',P')$ also is reduced and we obtain the contradiction 
    \[
        e_{(K',P')} = \Big[\frac{G\times H}{S\unlhd T}\Big] \underset{H}{\cdot} \Big[\frac{G\times H}{S\unlhd T}\Big]^{op} \in I_G.
    \]
\end{nothing}

\begin{nothing}{\bf Proof of 6.3}
    \begin{enumerate}[(a)]
        \item By the Lemma \ref{lemma8.1} a standart basis element $\big[ \frac{G\times G}{S\unlhd T}\big]$ of $E_G$ belongs to covering algebra if $(T,S)\in \Sigma_{G\times G}^c$ and it belongs to $I_G$ if $(T,S)\not \in \Sigma_{G\times G}^c$ so their sum is a whole endomorphism ring. Thus $E_G^c$ is a covering algebra for $E_G$.
        \item The standart basis elements $\big[\frac{G\times G}{S\unlhd T}\big]$ with $(T,S)\in \Sigma_{G\times G}^c$ such that $l_0(T,S) \not \in \R_G$ generate the intersection of covering algebra and ideal $I_G$ as $k$-module. Hence we have an equality 
        \[
            E_G^c\cap I_G = \bigoplus_{\begin{smallmatrix} \{K,P\}_G\in \G/\sim & \\ (K,P)\not \in \R_G  \end{smallmatrix}} E_G^{c,\{K,P\}}
        \]
        Since the set of reduced elements is a lower set and due to the Lemma \ref{lem:w's} we have the equality
        \[
            \bigoplus_{\begin{smallmatrix} \{K,P\}_G\in \G/\sim & \\ (K,P)\not \in \R_G  \end{smallmatrix}} E_G^{c,\{K,P\}} = \bigoplus_{\begin{smallmatrix} \{K,P\}_G\in \G/\sim & \\ (K,P)\not \in \R_G  \end{smallmatrix}} f_{\{K,P\}_G}E_G^c.
        \]
        Thus we get the desired result.
        \item It is a consequence of the Proposition \ref{prop:central} and Part (b).
        \item It is a consequence of the Proposition \ref{lem:w's}(d)  and Part (c).
        \item Let $[V]$ be the class of simple $k\Gamma_{(G,K,P)}$-module and $(K,P)\in \widetilde{\R}_G$ such that $(K,P) = (K_i,P_i)$ where the elements of $\{K,P\}_G$ are enumerated as in the Lemma \ref{lem:w's}. Then by the Morita equivalence of algebras Mat$_n(k\Gamma_{(G,K,P)})$ and $k\Gamma_{(G,K,P)}$, $[V]$ corresponds to the class of simple module Mat$_n(k\Gamma_{(G,K,P)})e_i\otimes_{k\Gamma_{(G,K,P)}}V$. Then for $y_i = e_i = e_{(K,P)}$ the isomorphism $\omega \circ \sigma$ (from the proof of Theorem \ref{k-algiso}) transports this simple module to the irreducible module $f_{\{K,P\}_G}E_G^cf_{(K,P)}\otimes_{k\Gamma_{(G,K,P)}}V = E_G^cf_{(K,P)}\otimes_{k\Gamma_{(G,K,P)}} V$. The last equality due to the fact that $f_{\{K,P\}_G}$ is commutative in the covering algebra, and $f_{(K,P)}$ are orthogonal idempotents. So by the isomorphism in the Part (c) we are done.
    \end{enumerate}

\end{nothing}
\begin{lem}\label{lem:equalOrder}
Let $G$ and $H$ be finite groups, $(K, P)\in \mathcal R_G$ and $(L, Q)\in \mathcal R_H$ be such that $(G, K, P, 1)\sim (H, L, Q, 1)$. Then $G$ and $H$ have the same order.
\end{lem}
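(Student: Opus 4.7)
The plan is to prove this as a direct double application of Theorem \ref{prop:reduced}(c), exploiting the fact that the linkage relation between quadruples is symmetric (it is an equivalence relation, as noted after its definition).

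First, I would invoke the contrapositive of Theorem \ref{prop:reduced}(c). Since $(K,P)\in\R_G$ is reduced and there exists a pair $(L,Q)\in\mathcal G_H$ with $(G,K,P,1)\sim(H,L,Q,1)$, it cannot be that $|H|<|G|$. Hence $|G|\le |H|$.

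Next, I would use the symmetry of the linkage relation. Indeed, if $(T,S)\in\Sigma_{G\times H}$ realizes $(G,K,P,1)\underset{(T,S)}{\sim}(H,L,Q,1)$, then the opposite section $(T^{\mathrm{op}},S^{\mathrm{op}})\in\Sigma_{H\times G}$ realizes $(H,L,Q,1)\sim(G,K,P,1)$, since passing to the opposite swaps the left and right invariants. Applying Theorem \ref{prop:reduced}(c) again, this time to the reduced pair $(L,Q)\in\R_H$, we conclude that $|G|\not<|H|$, that is, $|H|\le |G|$. Combining the two inequalities yields $|G|=|H|$.

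The main (and really only) obstacle is the one already handled in Theorem \ref{prop:reduced}(c): one must know that if $|H|<|G|$ then a linkage $(G,K,P,1)\sim(H,L,Q,1)$ forces $e_{(K,P)}$ into the essential ideal $I_G$, making $(K,P)$ non-reduced. This is obtained by multiplying $[\tfrac{G\times H}{S\unlhd T}]$ with its opposite $[\tfrac{H\times G}{S^{\mathrm{op}}\unlhd T^{\mathrm{op}}}]$ over $H$ and invoking Proposition \ref{prop4.2}(c) to recognize a nonzero scalar multiple of $e_{(K,P)}$ as a composition factoring through the smaller group $H$. Once this is granted, the present lemma is simply the contrapositive applied twice, so no further calculation is required.
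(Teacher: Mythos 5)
Your proof is correct and takes essentially the same approach as the paper: the paper's proof also multiplies $\bigl[\tfrac{G\times H}{S\unlhd T}\bigr]$ by its opposite over $H$ to exhibit a nonzero scalar multiple of $e_{(K,P)}$ factoring through $H$, concluding that $|G|\le|H|$ since $(K,P)$ is reduced. You simply package this step by citing Theorem~\ref{prop:reduced}(c) instead of writing the factorization inline, and you make explicit the symmetric application to $(L,Q)\in\mathcal R_H$ (which the paper leaves implicit), so your write-up is if anything slightly more complete.
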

\begin{proof}
Since $(G, K, P, 1)$ and $(H, L, Q, 1)$ are linked, there is a section $[\frac{G\times H}{S\unlhd T}]\in {}_{(G, K, P)}\Gamma_{(H, L, Q)}$. Assume $|G| > |H|$. Then there is a factorization $|H:Q| |G: P|e_{(K, P)} = [\frac{G\times H}{S\unlhd T}]\times_H [\frac{G\times H}{S\unlhd T}]^{op}$. This is impossible since $e_{(K, P)}$ is reduced.
\end{proof}

\begin{lem}\label{lem:technical}
Let $G$ and $H$ be finite groups of the same order, let $\Gamma_k^c(G, H)$ be the submodule of $\Gamma_k(G, H)$ spanned by all sections corresponding to covering pairs. Also let $(K, P)\in \mathcal G_G$ and $(L, Q)\in \mathcal G_H$. Then
\begin{enumerate}[(a)]
\item There is an isomorphism of $(k\Gamma_{(G, K, P)}, k\Gamma_{(H, L, Q)})$-bimodules
\[
k[{}_{(G, K, P)}\Gamma_{(H, L, Q)}] \to f_{(K, P)}\Gamma_k^c(G, H)f_{(L, Q)}
\]
given by mapping $b$ to $f_{(K, P)}b f_{(L, Q)}$. Here the actions on the left and on the right of $f_{(K, P)}\Gamma_k^c(G, H)f_{(L, Q)}$ are given through the isomorphism in Theorem \ref{thm:Gammas}. 
\item Suppose $(L, Q)\in \mathcal R_H$ and $W$ is a simple $k\Gamma_{(H, L, Q)}$-module. Then there is an epimorphism
\[
k[{}_{(G, K, P)}\Gamma_{(H, L, Q)}]\otimes_{k\Gamma_{(H, L, Q)}} W \to f_{(K, P)}S_{H, \widetilde W}(G)
\]
of $k\Gamma_{(G, K, P)}$-modules.
\end{enumerate}
\end{lem}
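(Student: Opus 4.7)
My plan is to lift the arguments from Lemma \ref{lem:w's} to the two-group setting. For part (a), I would first decompose $\Gamma_k^c(G,H)$ as a $k$-module by sorting each covering basis element $[\frac{G\times H}{S\unlhd T}]$ according to the pair $(l_0(T,S), r_0(T,S)) \in \G\times\mathcal G_H$; the summand indexed by $((K',P'),(L',Q'))$ is exactly $k[{}_{(G,K',P')}\Gamma_{(H,L',Q')}]$. Next, adapting Proposition \ref{prop4.2}(a)--(b) to sections of $G\times H$ and running the Möbius inversion argument from the proofs of Proposition \ref{prop:central}(a) and Lemma \ref{lem:w's}(a)--(b), I would show that $f_{(K,P)}[\frac{G\times H}{S\unlhd T}]$ vanishes unless $l_0(T,S)=(K,P)$, together with the symmetric claim for right multiplication by $f_{(L,Q)}$. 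This forces $f_{(K,P)}\Gamma_k^c(G,H)f_{(L,Q)}$ to sit inside the $((K,P),(L,Q))$-summand; identifying $b\mapsto f_{(K,P)}bf_{(L,Q)}$ with the projection onto a complement of the submodule killed by the two-sided multiplication then yields the $k$-module isomorphism. The bimodule compatibility follows from associativity of composition in $\mathcal P_\Gamma$: for $\alpha\in k\Gamma_{(G,K,P)}\subseteq f_{(K,P)}E_G^cf_{(K,P)}$ one has $\alpha\cdot(f_{(K,P)}bf_{(L,Q)}) = f_{(K,P)}(\alpha\cdot b)f_{(L,Q)}$, and symmetrically on the right.

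For part (b), I would construct the epimorphism by the rule
\[
\gamma\otimes w \;\longmapsto\; \overline{\gamma\cdot(\bar f_{(L,Q)}\otimes w)}\in f_{(K,P)}S_{H,\widetilde W}(G),
\]
where $\bar f_{(L,Q)}\otimes w\in\widetilde W$ and the dot denotes the action of $\gamma\in\Gamma_k(G,H)$, viewed as a morphism $H\to G$ in $\mathcal P_\Gamma$, on the simple functor $S_{H,\widetilde W}$. Since $f_{(K,P)}\gamma=\gamma$ for $\gamma\in{}_{(G,K,P)}\Gamma_{(H,L,Q)}$ by part (a), the image automatically lies in $f_{(K,P)}S_{H,\widetilde W}(G)$. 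Well-definedness over $k\Gamma_{(H,L,Q)}$ reduces to associativity of composition in $\mathcal P_\Gamma$ together with the identification $\bar f_{(L,Q)}\widetilde W\cong W$ as $k\Gamma_{(H,L,Q)}$-modules, which follows from Proposition \ref{prop9.5}(d). For surjectivity, when $f_{(K,P)}S_{H,\widetilde W}(G)=0$ there is nothing to prove (note that, since $|G|=|H|$ and $H$ is minimal for $S_{H,\widetilde W}$, the group $G$ is also minimal, so $S_{H,\widetilde W}(G)$ is a $\bar E_G$-module and $f_{(K,P)}S_{H,\widetilde W}(G)=0$ whenever $(K,P)\notin\R_G$); otherwise Proposition \ref{prop9.5}(d)--(e) presents $f_{(K,P)}S_{H,\widetilde W}(G)$ as a simple $k\Gamma_{(G,K,P)}$-module, so it suffices to exhibit a nonzero image. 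This I would obtain by observing that every element of $S_{H,\widetilde W}(G)$ arises from applying some $x\in\Gamma_k(G,H)$ to an element of $\widetilde W$; by Lemma \ref{lemma8.1} only covering $x$ survive after passage to $\bar E_G$, and by part (a) the $f_{(K,P)}$-component is then exactly the image of our map.

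The main obstacle I anticipate lies in part (b), namely in reconciling the several Morita-style identifications involved: the algebra $k\Gamma_{(H,L,Q)}$ acts on $W$ via its construction as an irreducible module, on $\widetilde W$ through the embedding $k\Gamma_{(H,L,Q)}\hookrightarrow \bar f_{(L,Q)}\bar E_H\bar f_{(L,Q)}\subseteq\bar E_H$, and these two actions must be shown to agree under the isomorphism $\bar f_{(L,Q)}\widetilde W\cong W$. Once this bookkeeping is settled, well-definedness and surjectivity become essentially formal, and part (a) reduces to a direct replay of the arguments given for the endomorphism algebra in Lemma \ref{lem:w's}.
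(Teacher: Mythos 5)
Your overall strategy --- lift the filtration/M\"obius machinery of Lemma \ref{lem:w's} to the rectangular module $\Gamma_k^c(G,H)$ --- is the right one and matches the paper's (unstated) intent, since the proof is declared to be essentially that of \cite[Lemma 9.5]{fibered biset}. However, part (a) as written rests on a false intermediate claim. You assert that $f_{(K,P)}\big[\frac{G\times H}{S\unlhd T}\big]$ vanishes unless $l_0(T,S)=(K,P)$, and conclude that $f_{(K,P)}\Gamma_k^c(G,H)f_{(L,Q)}$ is contained in the $\big((K,P),(L,Q)\big)$-summand. Neither holds. Write $l_0(T,S)=(K',P')$ and expand $f_{(K,P)}=\sum_{(K,P)\preceq(M,N)}\mu_{(K,P),(M,N)}\,e_{(M,N)}$: by the analogue of Proposition \ref{prop4.5} for $G\times H$, the product $f_{(K,P)}e_{(K',P')}$ is nonzero precisely when $(K',P')\preceq(K,P)$, not only when they coincide. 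Worse, even when $l_0(T,S)=(K,P)$, the analogue of Proposition \ref{prop4.2}(a) shows that $e_{(M,N)}\big[\frac{G\times H}{S\unlhd T}\big]$ is a scalar times a covering section with left middle invariant $(M,N)$, so $f_{(K,P)}\big[\frac{G\times H}{S\unlhd T}\big]$ is a combination of sections whose $l_0$ ranges over \emph{all} $(M,N)\succeq(K,P)$ and does not lie in the $(K,P)$-indexed summand. The same happens on the right. Hence the map $b\mapsto f_{(K,P)}bf_{(L,Q)}$ is not a retraction onto a piece of the source summand, and ``projection onto a complement'' as you describe it cannot take place inside that summand; the argument as stated is internally inconsistent.

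The fix is exactly what the proofs of Lemma \ref{lem:w's}(a)--(c) do in the square case and what you only gesture at: one needs a rectangular version of Proposition \ref{prop:central}(a) (that $f_{(K,P)}\big[\frac{G\times H}{S\unlhd T}\big]f_{(L,Q)}\neq 0$ forces $(G,K,P,1)\sim(H,L,Q,1)$), then one shows that $k[{}_{(G,K,P)}\Gamma_{(H,L,Q)}]$ and $f_{(K,P)}\Gamma_k^c(G,H)f_{(L,Q)}$ are two \emph{different} complements to the same subspace inside a suitable filtered piece of $\Gamma_k^c(G,H)$ ordered by linkage classes, so that $b\mapsto f_{(K,P)}bf_{(L,Q)}$ is ``identity plus higher terms'' and hence bijective. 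Once part (a) is repaired along these lines, your part (b) is essentially sound: the map $\gamma\otimes w\mapsto \gamma\cdot(\bar f_{(L,Q)}\otimes w)$ is $k\Gamma_{(G,K,P)}$-linear and lands in $f_{(K,P)}S_{H,\widetilde W}(G)$, and a nonzero homomorphism to a simple module is automatically surjective. Your sketch of nonvanishing (reduce to covering morphisms via Lemma \ref{lemma8.1} and minimality of $G$, then use the corrected part (a) to identify the $f_{(K,P)}$-component) is the right idea, though it deserves the careful unwinding of $S_{H,\widetilde W}(G)$ as a quotient of $\Gamma_k(G,H)\otimes_{E_H}\widetilde W$ that you flag as the ``bookkeeping'' issue.
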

The proof of this technical lemma is almost identical to the proof of \cite[Lemma 9.5]{fibered biset}. We leave the justification to the reader.
\medskip

\end{document}